\newtheorem{thm}{Theorem}[section]
\newtheorem{lemma}[thm]{Lemma}
\newtheorem{prop}[thm]{Proposition}
\newtheorem{coroll}[thm]{Corollary}
\newtheorem{defi}[thm]{Definition}
\theoremstyle{remark}
\newtheorem{rmk}[thm]{Remark}
\newcommand{\dom}{\mathsf{D}}
\newcommand{\E}{\mathop{{}\mathbb{E}}}
\newcommand{\cF}{\mathscr{F}}
\newcommand{\pG}[1]{{#1}_\Gamma}
\newcommand{\cJ}{\mathscr{J}}
\newcommand{\cL}{\mathscr{L}}
\renewcommand{\P}{\mathbb{P}}
\newcommand{\erre}{\mathbb{R}}
\newcommand{\enne}{\mathbb{N}}
\newcommand{\embed}{\hookrightarrow}
\newcommand{\ind}[1]{\mathbbm{1}_{#1}}
\newcommand{\longto}{\longrightarrow}
\DeclarePairedDelimiter\abs{\lvert}{\rvert}
\DeclarePairedDelimiter\norm{\lVert}{\rVert}
\DeclarePairedDelimiterX\ip[2]{\langle}{\rangle}{#1,#2}
\numberwithin{equation}{section}
\title{Refined existence and regularity results for a class of
  semilinear dissipative SPDEs}
  \author{Carlo Marinelli\thanks{Department of Mathematics, University
      College London, Gower Street, London WC1E 6BT, United
      Kingdom. URL: \texttt{http://goo.gl/4GKJP}} 
  \and Luca Scarpa\thanks{Faculty of Mathematics, University
      of Vienna, Oskar-Morgenstern-Platz 1, 1090 Vienna, Austria. 
      E-mail: \texttt{luca.scarpa@univie.ac.at}, URL: \texttt{https://mat.univie.ac.at/$\sim$scarpa/}}}
\date{December 18, 2019}
\begin{document}
\maketitle

\begin{abstract}
  We prove existence and uniqueness of solutions to a class of
  stochastic semilinear evolution equations with a monotone nonlinear
  drift term and multiplicative noise, considerably extending
  corresponding results obtained in previous work of ours.  In
  particular, we assume the initial datum to be only measurable and we
  allow the diffusion coefficient to be locally
  Lipschitz-continuous. Moreover, we show, in a quantitative fashion,
  how the finiteness of the $p$-th moment of solutions depends on the
  integrability of the initial datum, in the whole range
  $p \in ]0,\infty[$.  Lipschitz continuity of the solution map in
  $p$-th moment is established, under a Lipschitz continuity
  assumption on the diffusion coefficient, in the even larger range
  $p \in [0,\infty[$.  A key role is played by an It\^o formula for
  the square of the norm in the variational setting for processes
  satisfying minimal integrability conditions, which yields pathwise
  continuity of solutions.  Moreover, we show how the regularity of
  the initial datum and of the diffusion coefficient improves the
  regularity of the solution and, if applicable, of the invariant
  measures.
  \medskip\par\noindent
  \emph{AMS Subject Classification:} Primary: 60H15, 47H06,
  37A25. Secondary: 46N30.
  \medskip\par\noindent
  \emph{Key words and phrases:} stochastic evolution equations,
  singular drift, variational approach, monotonicity methods,
  invariant measures.
\end{abstract}


\section{Introduction}
\label{sec:intro}
We consider semilinear stochastic partial differential equations on a
smooth bounded domain $D \subseteq \erre^d$ of the form
\begin{equation}
  \label{eq:0}
  dX(t) + AX(t)\,dt + \beta(X(t))\,dt \ni B(t,X(t))\,dW(t),
  \qquad X(0)=X_0,
\end{equation}
where $A$ is a coercive maximal monotone operator on (a subspace of)
$H:=L^2(D)$, $\beta$ is a maximal monotone graph in $\erre \times \erre$
defined everywhere, $W$ is a cylindrical Wiener process on a separable
Hilbert space $U$, and $B$ is a process taking values in the space of
Hilbert-Schmidt operators from $U$ to $L^2(D)$ satisfying a (local)
Lipschitz continuity condition. Precise assumptions on the data of the
problem are given in \S\ref{sec:ass} below.

Assuming that the initial datum $X_0$ has finite second moment and the
diffusion coefficient $B$ is globally Lipschitz continuous, we proved
in \cite{cm:AP18} that equation \eqref{eq:0} admits a unique solution,
in a generalized variational sense, whose trajectories are weakly
continuous in $H$. The contribution of this work is to extend these
results in several directions.
In fact, we show that the solution $X$ is pathwise strongly continuous
in $H$, rather than just weakly continuous. This is possible thanks to
an It\^o-type formula, interesting in its own right, for the square of
the $H$-norm of processes satisfying minimal integrability conditions,
in a variational setting extending the classical one by Pardoux
\cite{Pard}.
The pathwise strong continuity of solutions to stochastic equations
with singular drift is often a difficult problem, and it was left as
an open problem both in the case of semilinear equations (see
\cite{Barbu:Lincei}) as well as of porous media equations (see
\cite{BDPR-porous}). This trajectorial regularity result is essential
to prove that the Markovian semigroup generated by the solution to
\eqref{eq:0} is strongly continuous, for which we refer to
\cite{cm:inv}.
Moreover, the pathwise strong continuity allows us to prove that
existence and uniqueness of solutions to \eqref{eq:0} continues to
hold under much weaker assumptions on the initial datum and on the
diffusion coefficient. In particular, $X_0$ needs only be measurable
and $B$ can be locally Lipschitz-continuous with linear
growth. Denoting by $\Omega$ the underlying probability space, the
solution map $X_0 \mapsto X$ is thus defined on $L^0(\Omega;H)$, with
codomain contained in $L^0(\Omega;E)$, where $E$ is a suitable path
space. By the results of \cite{cm:AP18} we also have that the solution
map restricted to $L^2(\Omega;H)$ has codomain contained in
$L^2(\Omega;E)$. As a further result, we extrapolate these mapping
properties to the whole range of exponents $p \in [0,\infty[$, that
is, we show that if $X_0 \in L^p(\Omega;H)$ then $X \in L^p(\Omega;E)$
for every positive finite $p$, and we provide an explicit upper bound
on the $L^p(\Omega;E)$-norm of the solution in terms of the
$L^p(\Omega;H)$-norm of the initial datum. If, in addition, $B$ is
Lipschitz-continuous, we show that the solution map is
Lipschitz-continuous from $L^p(\Omega;H)$ to $L^p(\Omega;E)$ for all
$p \in [0,\infty[$. In the particular case $p=0$, this implies that
solutions converge uniformly on $[0,T]$ in probability if the
corresponding initial data converge in probability.

The existence of higher moments of the solution in
suitable path spaces is essential for various applications, for
instance in optimal control problems of stochastic evolution systems
of reaction-diffusion type with singular terms. Problems of this
type arise naturally in phase-transition modelling and tumor-growth
dynamics (see, e.g.,\cite{garcke-lam,garcke-lam-roc,scar-OCSCH}).
Indeed, a classical way to prove first-order conditions for optimality
of a certain given cost functional is to study the differentiability
of the so-called control-to-state map.  This can be obtained provided
that a refined continuous dependence result on the controls holds. To
this end, due to the high nonlinearity of the equations involved, one
is usually forced to prove boundedness of the solutions in function
spaces with higher integrability in $\Omega$ (see, e.g.,
\cite{scar-OCSCH}).  For these reasons, the possibility of relating
the existence of moments (in probability) of the solutions to the
integrability of the data is fundamental. However, while higher-moment
estimates are well known for equations with well-behaved
nonlinearities, in case of singular potentials as in \eqref{eq:0} no
result is currently available in literature.  The possibility of
including multivalued graphs $\beta$ as in \eqref{eq:0} is crucial in
diffuse-interface and phase-change modelling, and the results proved
in this paper hence constitute a first reference in this direction.
Let us also mention that the availability of an $L^p$-theory is also
crucial in the context of numerical approximation of SPDEs, as for
example the convergence rate and the choice of the discretization
scheme usually depend on the integrability properties of the solutions
(see, e.g., in the context of numerical discretization of SPDEs,
\cite[Introduction]{krylov-AA}, \cite{gyon:lattice}, and
\cite{cohen-quer}.
  
Finally, we show how the smoothness of the solution improves (as well
as of invariant measures, if they exist) if the initial datum and the
diffusion coefficient are smoother, without any further regularity
assumption on the (possibly singular) monotone drift term $\beta$. For
example, if $A$ (better said, the part of $A$ in $H$) is self-adjoint,
the solution has paths belonging to the domain of $A$ in $H$ if $X_0$
and $B$, roughly speaking, take values in the domain of
$A^{1/2}$. This implies that $X$ is a strong solution in the classical
sense, not just in the variational one. In particular, if \eqref{eq:0}
represents the abstract formulation of an initial-boundary value
problem on $D$, our result provides sufficient conditions on the data
implying higher regularity in space of the solution. The main novelty
is that this extra regularity can be obtained \emph{irrespectively} of
the bahaviour of the nonlinear term $\beta$, meaning that one can
obtain higher regularity in space (thus also better compactness
properties) without necessarily ``smoothing'' out the singularities of
$\beta$. Such a result is new in the literature, to the best of our
knowledge, and is essential in the context of optimal control problems
of stochastic systems in phase-field modelling (see
\cite{scar-OCSCH}). In fact, on the one hand, equations with singular
$\beta$ are necessary to model phase-change phenomena, and, on the
other hand, the possibility of proving necessary conditions for
optimality hinges on the availability of spatial regularity
results. For this reason, our contribution is a first step in this
direction, apart of providing new results in the regularity theory of
singular dissipative stochastic PDEs.

\medskip

Let us comment now more in detail on the current available literature.

In the classical variational theory of SPDEs, existence and uniqueness
of solutions under a local Lipschitz condition on $B$ and
measurability of $X_0$ were obtained by Pardoux in \cite{Pard}. Our
results do not follow from his, however, as equation \eqref{eq:0}
cannot be cast in the usual variational setting. Stochastic equations
where \emph{all} nonlinear terms are locally Lipschitz-continuous have
been considered in the semigroup approach (see, e.g., \cite{KvN2} and
references therein), but our existence results are not covered, as
$\beta$ can be discontinuous and have arbitrary growth.  Moreover, the
properties of the solution map between $L^p(\Omega;H)$ and
$L^p(\Omega;E)$ do not seem to have been addressed even in the
classical variational setting. On the other hand, the continuity of
the solution map in the case $p=0$ for ordinary SDEs in $\erre^n$ with
Lipschitz coefficients has been studied, also with very general
semimartingale noise (see, e.g., \cite{Eme:stab}).

The techniques used to obtain the above results are mostly classical:
we truncate the initial datum and the diffusion coefficient in a
suitable way, so that existence of solutions can be proved locally
(i.e. on a stochastic interval). Uniform estimates on local solutions
allow then to extend them to the whole interval $[0,T]$. It
is however worth stressing the fact that these arguments are
relatively simple only when applied to classes of equations for which
existence of strong solutions with continuous trajectories is known
(e.g. to equations with a polynomially growing drift). On the other
hand, in the case of singular nonlinearities as in \eqref{eq:0}
several non-trivial difficulties appear: for instance, if only weak
pathwise continuity is known (as is the case for \eqref{eq:0}, as well
as for other equations -- cf. the already mentioned \cite{BDPR-porous,
  cm:AP18}), it is not clear at all how to \emph{define} local
solutions.

The theory of stochastic evolution equations with monotone drift has
considerably developed since the initial contributions by Pardoux,
Krylov and Rozovski\u{\i} \cite{KR-spde,Pard}. In fact, while in the
deterministic case general well-posedness theories for evolution
equations with $m$-accretive or maximal monotone operators are
available (see, e.g.,~\cite{Barbu:type}), the picture is much less
complete in the stochastic setting, where a general well-posedness
theory is currently out of reach. Nonetheless, several results are
available for specific classes of equations.
For instance, existence of strong solution has been proved in
\cite{Gess} for stochastic equations on a Hilbert space whose drift is
the subdifferential of a potential satisfying a sub-homogeneity
assumption (an exponentially growing drift is then not
allowed). Probabilistically weak solutions, defined in terms of
stochastic variational inequalities, have been shown to exist in
\cite{BenRas}, with no restrictive assumptions on the potential. Let
us also mention, parenthetically, that concepts of solution based on
stochastic variational inequalities have been proposed also for
equations with drift in divergence form and of fast-diffusion type
(see \cite{GessRoc-fast,GessRoc}).

All of the above results are not applicable to semilinear equations
such as \eqref{eq:0}, if neither growth nor coercivity assumptions on
$\beta$ are assumed. Well-posedness in the strong sense and ergodicity
properties for this class of equations have been obtained in
\cite{cm:inv, cm:jump, cm:AP18}, where no growth conditions on the
drift are needed.  Several analogous results have also been obtained
for fully nonlinear equations in divergence form with singular drift,
again without any growth assumptions, in \cite{cm:note, cm:div, luca}.

\medskip

The rest of the text is organized as follows. In \S\ref{sec:ass} we
state the main assumptions and we recall the well-posedness result for
\eqref{eq:0} obtained in \cite{cm:AP18}.
In \S\ref{sec:cont} we prove a generalized It\^o formula for the
square of the norm, as well as the strong pathwise continuity of
solutions.
In \S\ref{sec:X0} we prove existence and uniqueness of strong
variational solutions to \eqref{eq:0} assuming first that $B$ is
locally Lipschitz-continuous with linear growth and that $X_0$ is
square integrable, hence removing the latter assumption in a second
step, allowing $X_0$ to be merely measurable. While in the former case
solutions have finite second moment, in the latter case one needs to
work with processes that are just measurable (in $\omega$), so that
uniqueness has to be proved in a much larger space. This is achieved
by a suitable application of the It\^o formula of {\S}\ref{sec:cont}
and stopping arguments.
In \S\ref{sec:mom} we show that $X_0$ having finite $p$-th moment
implies that the solution belongs to a space of processes with finite
$p$-moment as well, with explicit control of its norm. The Lipschitz
continuity of the solution map is then established in a particular case.
Further regularity of the solution and of invariant measures is
obtained in the last section, under additional regularity assumptions
on $X_0$ and $B$.

\subsection*{Acknowledgments}
The authors thank David Cohen for some comments and bibliographycal
suggestions.
The first-named author gratefully acknowledges the hospitality of the
Interdisziplin\"ares Zentrum f\"ur Komplexe Systeme (IZKS) at the
University of Bonn, Germany.
The second-named author gratefully acknowledges the Austrian Science Fund (FWF)
project M 2876.


\section{Assumptions and preliminaries}
\label{sec:ass}
\subsection{Notation and terminology}
Given a Banach space $E$, its (topological) dual will be denoted by
$E'$. Given a further Banach space $F$, the (Banach) space of linear
bounded operators from $E$ to $F$ will be denoted by $\cL(E,F)$. If
$E$ and $F$ are Hilbert spaces, $\cL^2(E,F)$ stands for the space of
Hilbert-Schmidt operators from $E$ to $F$. We recall that
Hilbert-Schmidt operators form a two-sided ideal on linear bounded
operators.

A graph $\gamma$ in $E$ is a subset of $E\times E$ and the domain of
$\gamma$ is defined as
$\dom(\gamma) := \{ x\in E: \exists\,y\in E: (x,y) \in \gamma \}$.  We
shall identify linear unbounded operators between Banach spaces with
their graphs, as usual.  If $E$ is a Hilbert space, $\gamma$ is
monotone if $(x_1,y_1)$, $(x_2,y_2) \in \gamma$ implies
$\ip{y_2-y_1}{x_2-x_1}_E \geq 0$, where $\ip{\cdot}{\cdot}_E$ is the
scalar product in $E$. The notion of maximal monotone graph is
immediate once graphs are ordered by inclusion.

We shall use the standard notation of stochastic calculus (see, e.g.,
\cite{Met}). In particular, given a c\`adl\`ag process $Y$ with values
in a separable Banach space $E$, the process $Y^*$ is defined as
$Y^*(t):=\sup_{s \in [0,t]} \norm{Y(s)}$. For notational convenience,
we shall also denote the time index as a subscript rather than within
parentheses. Moreover, a process $Y$ stopped at a stopping time $S$ is
denoted by $Y^S$, and the stochastic integral of $K$ with respect to a
local martingale $M$ is denoted by $K \cdot M$.

\subsection{Assumptions}
\label{ssec:ass}
Let $D$ be a bounded domain in $\erre^d$ with smooth boundary, and $V$
a real separable Hilbert space densely, continuously, and compactly
embedded in $H:=L^2(D)$. The scalar product and the norm of $H$ will
be denoted by $\ip{\cdot}{\cdot}$ and $\norm{\cdot}$,
respectively. Identifying $H$ with its dual $H'$, the triple
$(V,H,V')$ is a so-called Gelfand triple: the duality form between $V$
and $V'$ extends the scalar product of $H$,
i.e. $\ip{v}{w}={}_V\ip{v}{w}_{V'}$ for any $v$,
$w \in H$. For this reason, we shall simply denote the duality form of
$V$ and $V'$ by the same symbol used for the scalar product in $H$.

The following assumptions on the linear operator $A \in \cL(V,V')$
will be tacitly assumed to hold throughout the whole text:
\begin{itemize}
\item[(i)] there exists $C>0$ such that $\ip{Av}{v} \geq C\norm{v}_V^2$
  for every $v\in V$;
\item[(ii)] the part of $A$ in $H$ can be extended to an $m$-accretive
  operator $A_1$ on $L^1(D)$;
\item[(iii)] for every $\delta>0$, the resolvent $(I+\delta A_1)^{-1}$
  is sub-Markovian, i.e. for every $f\in L^1(D)$ such that
  $0 \leq f \leq 1$ a.e. on $D$, one has
  $0 \leq (I+\delta A)^{-1}f \leq 1$ a.e. on $D$;
\item[(iv)] there exists $m \in \enne$ such that
  $(I+\delta A_1)^{-m} \in \cL(L^1(D), L^\infty(D))$.
\end{itemize}
We shall occasionally refer to hypothesis (i) as coercivity of $A$,
and to hypothesis (iv) as ultracontractivity of the resolvent of
$A_1$.

\smallskip

Let us now state the assumptions on the nonlinear part of the drift:
$\beta \subset \erre \times \erre$ is a maximal monotone graph such
that $0 \in \beta(0)$ and $\dom(\beta)=\erre$.  Let
$j:\erre \to [0,+\infty)$ be the unique convex lower-semicontinuous
function such that $j(0)=0$ and $\beta=\partial j$, where $\partial$
stands for the subdifferential in the sense of convex analysis. We
assume that
\[
  \limsup_{|r|\to\infty} \frac{j(r)}{j(-r)} < \infty.
\]
Denoting the Moreau-Fenchel conjugate of $j$ by $j^*$, the fact that
$\dom(\beta)=\erre$ is equivalent to the superlinearity of $j^*$ at
infinity, i.e. to
\[
  \lim_{|r|\to\infty}\frac{j^*(r)}{|r|} = +\infty.
\]
For a comprehensive treatment of maximal monotone operators and their
connection with convex analysis we refer to, e.g.,
\cite{Barbu:type}. Here we limit ourselves to recalling that, for any
maximal monotone graph $\gamma$ on a Hilbert space $E$, its resolvent
and Yosida approximation of $\gamma$ are defined as
$(I+\lambda\gamma)^{-1}$ and
\[
  \gamma_\lambda := \frac{1}{\lambda}
  \bigl( I - (I+\lambda\gamma)^{-1} \bigr),
\]
respectively, that both are continuous operators on $E$, and that the
former is a contraction, while the latter is Lipschitz-continuous with
Lipschitz constant bounded by $1/\lambda$.

\smallskip

Let $(\Omega,\cF,\P)$ be a probability space, endowed with a
right-continuous and completed filtration $(\cF_t)_{t \in [0,T]}$, on
which a cylindrical Wiener process $W$ on a real separable Hilbert
space $U$ is defined. The diffusion coefficient
\[
  B:\Omega \times[0,T] \times H \to \cL^2(U,H)
\]
is assumed to be such that $B(\cdot,\cdot,x)$ is progressively
measurable for every $x\in H$, and to grow at most linearly in its
third argument, uniformly with respect to the others. That is, we
assume that there exists a constant $N$ such that
\[
  \norm[\big]{B(t,\omega,x)}_{\cL^2(U,H)} \leq
  N \bigl( 1+\norm{x} \bigr)
\]
for all $(\omega,t,x) \in \Omega \times [0,T] \times H$. In addition
to this, we shall consider two different assumptions, namely
\begin{itemize}
\item[(B1)] $B$ is Lipschitz continuous in its third argument,
  uniformly with respect to the others, i.e.
  \[
    \norm[\big]{B(\omega,t,x) - B(\omega,t,y)}_{\cL^2(U,H)}
    \leq N \norm{x-y}
  \]
  for all $(\omega,t) \in \Omega \times [0,T]$ and $x$, $y \in H$.
\item[(B2)] $B$ is locally Lipschitz continuous in its third argument,
  uniformly with respect to the others, i.e. there exists a function
  $R \mapsto N_R: \erre_+ \to \erre_+$ such that
  \[
    \norm[\big]{B(\omega,t,x) - B(\omega,t,y)}_{\cL^2(U,H)}
    \leq N_R \norm{x-y}
  \]
  for all $(\omega,t) \in \Omega \times [0,T]$ and $x$, $y \in H$ with
  $\norm{x}$, $\norm{y} \leq R$.
\end{itemize}

\smallskip

Finally, $X_0$ is assumed to be an $H$-valued $\cF_0$-measurable
random variable.

\smallskip

Let us now define the concept of solution to equation \eqref{eq:0}.
\begin{defi}
  \label{def:sol}
  A strong solution to \eqref{eq:0} is a pair $(X,\xi)$, where $X$ is
  a $V$-valued adapted process and $\xi$ is an $L^1(D)$-valued
  predictable process, such that, $\P$-almost surely,
    \begin{gather*}
    X \in L^\infty(0,T; H) \cap L^2(0,T; V), \qquad
    \xi \in L^1(0,T; L^1(D)),\\
    \xi \in \beta(X) \quad\text{a.e.~in } (0,T)\times D,
  \end{gather*}
  and
  \[
    X(t) + \int_0^t AX(s)\,ds + \int_0^t\xi(s)\,ds
    = X_0 + \int_0^tB(s,X(s))\,dW(s)
  \]
  in $V'\cap L^1(D)$ for all $t \in [0,T]$.
\end{defi}
It is convenient to introduce the family of sets $(\cJ_p)_{p\geq 0}$
as follows:
\[
  \cJ_p \subset \Bigl( L^p(\Omega; C([0,T];H))
  \cap L^p(\Omega; L^2(0,T;V)) \Bigr)
  \times L^{p/2}(\Omega; L^1( (0,T) \times D)
\]
formed by processes $(\phi,\psi)$ such that $\phi$ is adapted with
values in $V$, $\psi$ is predictable with values in $L^1(D)$,
$\psi \in \beta(\phi)$ a.e. in $\Omega \times (0,T) \times D$, and
$j(\phi)+j^*(\psi) \in L^{p/2}(\Omega; L^1((0,T) \times D)$.

The following well-posedness result has been proved in
\cite{cm:AP18}. Just for the purposes of this statement, we shall
denote the space $\cJ_2$ with $L^\infty(0,T;H)$ in place of
$C([0,T];H)$ by $\tilde{\cJ}_2$.
\begin{thm}
  \label{thm:WP}
  If $X_0 \in L^2(\Omega,\cF_0;H)$ and $B$ satisfies the global Lipschitz
  condition \emph{(B1)}, then there exists a unique strong solution
  $(X,\xi)$ to \eqref{eq:0} belonging to $\tilde{\cJ}_2$.  Furthermore, the
  trajectories of $X$ are weakly continuous in $H$ and the solution
  map
  \begin{align*}
    L^2(\Omega;H) &\longto L^2(\Omega; L^\infty(0,T; H)) \cap
    L^2(\Omega; L^2(0,T; V))\\
    X_0 &\longmapsto X
  \end{align*}
  is Lipschitz-continuous.
\end{thm}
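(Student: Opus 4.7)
The plan is to construct the solution by a Yosida regularization, derive uniform a priori bounds, and pass to the limit using monotonicity and convex duality. First I would replace $\beta$ by its Yosida approximation $\beta_\lambda$, which is single-valued and $1/\lambda$-Lipschitz continuous; for each $\lambda>0$ the regularized equation
\begin{equation*}
  dX_\lambda + AX_\lambda\,dt + \beta_\lambda(X_\lambda)\,dt = B(t,X_\lambda)\,dW,
  \qquad X_\lambda(0)=X_0,
\end{equation*}
falls in the classical variational framework of Pardoux \cite{Pard} (coercive maximal monotone $A$, Lipschitz nonlinear drift, Lipschitz diffusion), and thus admits a unique strong variational solution in $L^2(\Omega;C([0,T];H))\cap L^2(\Omega;L^2(0,T;V))$.

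The second step is to establish a priori estimates uniform in $\lambda$. Applying the classical It\^o formula to $\norm{X_\lambda}^2$, using coercivity of $A$, monotonicity of $\beta_\lambda$ (so that $\ip{\beta_\lambda(X_\lambda)}{X_\lambda}\geq 0$), the linear-growth bound on $B$, the Burkholder-Davis-Gundy inequality and Gronwall's lemma produces uniform bounds of $X_\lambda$ in $L^2(\Omega;L^\infty(0,T;H))\cap L^2(\Omega;L^2(0,T;V))$. In parallel, the Fenchel identity for $\beta_\lambda=\partial j_\lambda$, integrated against the It\^o identity, provides uniform $L^1(\Omega\times(0,T)\times D)$-bounds on $j((I+\lambda\beta)^{-1}X_\lambda)$ and $j^*(\beta_\lambda(X_\lambda))$. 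The superlinearity of $j^*$ guaranteed by $\dom(\beta)=\erre$ converts the latter bound into equi-integrability of $\{\beta_\lambda(X_\lambda)\}$ via the de la Vall\'ee-Poussin criterion.

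Weak compactness then yields a subsequence with $X_\lambda \rightharpoonup X$ in $L^2(\Omega;L^2(0,T;V))$ and weakly-$*$ in $L^2(\Omega;L^\infty(0,T;H))$, and $\beta_\lambda(X_\lambda)\rightharpoonup \xi$ weakly in $L^1$; passing to the limit in the regularized equation produces the integral identity for the candidate pair $(X,\xi)$. Uniqueness and the Lipschitz bound then follow simultaneously from Pardoux's variational It\^o formula applied to $\norm{X_1-X_2}^2$ for two solutions: the monotonicities of $A$ and of $\beta$ cancel the drift cross terms, hypothesis (B1) controls the It\^o correction and the stochastic integral, and Gronwall and BDG deliver
\begin{equation*}
  \E \sup_{t\in[0,T]} \norm{X_1(t)-X_2(t)}^2
  + \E\int_0^T \norm{X_1(t)-X_2(t)}_V^2\,dt
  \leq C\,\E\norm{X_0^1-X_0^2}^2.
\end{equation*}
Weak $H$-continuity of the paths is a consequence of the uniform $H$-bound on $X$ and the pathwise $V'\cap L^1(D)$-continuity inherited from the integral form of the equation, combined with a density argument using the separability of $V$.

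The main obstacle is the identification $\xi\in\beta(X)$: since $\beta$ may be multivalued and of arbitrary growth, pointwise convergence $\beta_\lambda(X_\lambda)\to\xi$ cannot be expected and the classical Minty-Browder trick based on strong $L^2$-convergence is unavailable. I would instead argue by convex duality: weak lower semicontinuity of the integral functionals associated with $j$ and $j^*$, combined with the energy identity obtained from It\^o's formula at the terminal time and the strong vanishing of $(I+\lambda\beta)^{-1}X_\lambda-X_\lambda$, yields the Fenchel-Young equality $j(X)+j^*(\xi)=X\xi$ almost everywhere, which is equivalent to $\xi\in\beta(X)$. This also explains the prominence of the superlinearity of $j^*$ (equivalently $\dom(\beta)=\erre$) among the standing hypotheses.
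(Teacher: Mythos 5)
This theorem is not proved in the present paper: it is quoted verbatim from \cite{cm:luca}, and the strategy you outline (Yosida regularization of $\beta$, uniform estimates from It\^o's formula, weak $L^1$-compactness of $\beta_\lambda(X_\lambda)$ via de la Vall\'ee-Poussin and the superlinearity of $j^*$, identification of the limit through lower semicontinuity of convex integrals and the Fenchel--Young equality rather than a Minty argument) is indeed the one used there. So at the level of architecture your proposal is the right one.

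There is, however, one genuine gap, and it sits exactly at the point the present paper is designed to repair. Both your uniqueness/Lipschitz-stability step and your identification step invoke ``Pardoux's variational It\^o formula'' or ``the energy identity obtained from It\^o's formula at the terminal time'' for the \emph{limit} equation, i.e.\ for a pair $(X,\xi)$ in which $\xi$ is only an $L^1((0,T)\times D)$-valued process. The classical formula of \cite{Pard} requires the whole drift to lie in $L^2(0,T;V')$; here $\xi\notin V'$ in general, the term $\int_D \xi X$ is not a $V$--$V'$ duality, and $\xi X$ is a priori only controlled through $j(X)+j^*(\xi)\in L^1$. Without an It\^o formula valid at this low regularity you cannot close the chain $\limsup_\lambda \E\int \beta_\lambda(X_\lambda)X_\lambda \le \E\int \xi X$ needed for the Fenchel--Young equality, nor can you run the Gronwall argument for two solutions whose singular components differ. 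This is precisely the content of Theorem~\ref{thm:Ito} above, and its proof is where the otherwise unused standing hypotheses enter: the extension of $A$ to an $m$-accretive operator on $L^1(D)$, the sub-Markovianity and ultracontractivity of its resolvent (to regularize $\xi$ into $H$ and apply Jensen's inequality for sub-Markovian operators), and the symmetry condition $\limsup_{|r|\to\infty} j(r)/j(-r)<\infty$ (to dominate $|Y^\delta g^\delta|$ by $j+j^*$ and apply Vitali). Your proposal never uses assumptions (ii)--(iv) or the growth condition on $j$, which is the clearest sign that the decisive technical ingredient is missing; everything else in your sketch is sound.
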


Our main result is the following far-reaching extension of
Theorem~\ref{thm:WP}: under the more general local Lipschitz
continuity assumption (B2), for any $X_0\in L^p(\Omega,\cF_0,\P;H)$,
$p \in [0,\infty[$, there exists a strong solution $(X,\xi)$ belonging
to $\cJ_p$, which is unique in $\cJ_0$. In particular, the
trajectories of $X$ are strongly continuous in $H$. Precise statements
and proofs are given in {\S}\ref{sec:X0}.


\section{Pathwise continuity via a generalized It\^o formula}
\label{sec:cont}
In this section we prove that, under the assumptions of
Theorem~\ref{thm:WP}, the unique strong solution $(X,\xi)$ in $\cJ_2$
to \eqref{eq:0} is such that $X$ admits a modification with strongly
continuous trajectories in $H$, rather than just weakly continuous. To
this purpose, we need a generalized It\^o's formula for the square of
the norm under minimal integrability assumptions, that will play a
fundamental role throughout.

We first need some preparations.
Let us recall that the part of $A$ in $H$ is the linear (unbounded)
operator on $H$ defined by $A_2 := A \cap (V \times H)$. In particular,
\[
  \dom(A_2) = \bigl\{ u \in V:\, Au \in H \bigr\}
  \qquad \text{and} \qquad
  A_2u = Au \quad \forall  u \in \dom(A_2).
\]
It is well known (see, e.g., \cite{ISEM18}) that $A_2$ is closed and that
$\dom(A_2)$ is a Banach space with respect to the graph norm
\[
  \norm{u}_{\dom(A_2)}^2:= \norm{u}^2 + \norm{Au}^2.
\]
Moreover, $\dom(A_2)$ is continuously and densely embedded in $V$.

\begin{lemma}
  \label{lm:cvV}
  Let $v \in V$ and $v_\lambda := (I+\lambda A_1)^{-1}v$. Then
  $v_\lambda \to v$ in $V$ as $\lambda \to 0$.
\end{lemma}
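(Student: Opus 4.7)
The plan is first to translate the problem from the $L^1(D)$--level back to the natural Hilbert setting, where coercivity is available, and then to upgrade strong $H$-convergence to strong $V$-convergence via a Minty-type coercivity argument. The key preliminary observation is that for $v\in V$ (hence $v\in H$) the $L^1$-resolvent $v_\lambda=(I+\lambda A_1)^{-1}v$ in fact belongs to $\dom(A_2)$ and coincides with $(I+\lambda A_2)^{-1}v$: by coercivity and boundedness of $A\colon V\to V'$, Lax--Milgram produces a unique $u\in V$ with $u+\lambda Au=v$, and the identity $\lambda Au=v-u\in H$ then places $u$ in $\dom(A_2)$; uniqueness for the $L^1$-equation forces $u=v_\lambda$. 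In particular $A_2$ is $m$-accretive on $H$, and since $\dom(A_2)$ is dense in $V$ (hence in $H$), classical Hille--Yosida resolvent convergence gives $v_\lambda\to v$ strongly in $H$ as $\lambda\to 0$.

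Next I would test the resolvent identity $\lambda Av_\lambda=v-v_\lambda$, which now makes sense in $H$, against $v_\lambda-v$. Using linearity of $A$, coercivity in the form $\ip{A(v_\lambda-v)}{v_\lambda-v}\geq C\norm{v_\lambda-v}_V^2$, and the elementary computation
\[
  \ip{Av_\lambda}{v_\lambda-v}=\frac{1}{\lambda}\ip{v-v_\lambda}{v_\lambda-v}=-\frac{1}{\lambda}\norm{v_\lambda-v}^2,
\]
expansion of $\ip{A(v_\lambda-v)}{v_\lambda-v}$ should yield the basic inequality
\[
  C\norm{v_\lambda-v}_V^2+\frac{1}{\lambda}\norm{v_\lambda-v}^2\leq \ip{Av}{v-v_\lambda},
\]
where the right-hand side is the $V',V$ duality pairing (well-defined since $v\in V$ forces $Av\in V'$). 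This inequality is the heart of the argument.

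To conclude, I would bound the right-hand side by $\norm{Av}_{V'}\norm{v-v_\lambda}_V$, obtaining the uniform estimate $\norm{v-v_\lambda}_V\leq \norm{Av}_{V'}/C$. Boundedness of $\{v_\lambda\}$ in $V$, combined with the strong $H$-convergence from the first step, forces $v_\lambda\rightharpoonup v$ weakly in $V$: any weak-$V$ subsequential limit must coincide with the strong $H$-limit $v$. Therefore $\ip{Av}{v-v_\lambda}\to 0$, and plugging back into the displayed inequality delivers $\norm{v_\lambda-v}_V\to 0$.

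The main subtlety I anticipate is the bookkeeping of the first step: since $A_1$ is defined on $L^1(D)$ while coercivity is a $V$-level statement, some care is needed to show that the $L^1$-resolvent of $v\in V$ actually lands in $\dom(A_2)$, so that the variational identity $v_\lambda+\lambda Av_\lambda=v$ is available in $H$ (equivalently in $V'$) and the coercivity estimate may be applied to $v_\lambda-v$. Once this bridge between the $L^1$- and $H$-pictures is in place, the remainder is a routine monotonicity$+$coercivity computation.
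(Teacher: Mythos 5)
Your argument is correct, and it reaches the conclusion by a genuinely different route from the paper. Both proofs share the same starting point — identifying the $L^1$-resolvent of $v\in V$ with the variational resolvent, so that $v_\lambda\in\dom(A_2)$ and $v_\lambda+\lambda Av_\lambda=v$ holds in $H\embed V'$ — and both extract uniform $V$-boundedness of $v_\lambda$ from coercivity. They diverge after that. The paper runs a density argument: it approximates $v$ by $u\in\dom(A_2)$, uses the uniform $V$-bound on the resolvent to control $\norm{u_\lambda-v_\lambda}_V$, and then performs a second resolvent computation in $\dom(A_2^2)$ to obtain the quantitative rate $\norm{u_\lambda-u}_V\leq N\sqrt{\lambda}\,\norm{Au}$ on the dense subspace, concluding by a $3\varepsilon$ argument. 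You instead work directly with $v$ itself: testing $\lambda Av_\lambda=v-v_\lambda$ against $v_\lambda-v$ gives
\[
  C\norm{v_\lambda-v}_V^2+\frac{1}{\lambda}\norm{v_\lambda-v}^2
  \leq {}_{V'}\!\ip{Av}{v-v_\lambda}_V,
\]
and the right-hand side tends to zero because $v_\lambda\rightharpoonup v$ weakly in $V$ (bounded in $V$ plus strong $H$-convergence identifies the weak limit). This is shorter, avoids the computation in $\dom(A_2^2)$ entirely, and as a bonus your key inequality already yields $\norm{v-v_\lambda}\leq\lambda^{1/2}C^{-1/2}\norm{Av}_{V'}$, so the appeal to Hille--Yosida for the strong $H$-convergence is not even strictly necessary. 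What the paper's longer route buys is the explicit convergence rate $O(\sqrt{\lambda})$ in the $V$-norm for data in $\dom(A_2)$, which your qualitative weak-convergence step does not provide; neither that rate nor the uniform operator bound on $V$ is needed elsewhere in the paper beyond this lemma, so for the purpose of proving the statement your proof is a clean and complete alternative.
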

\begin{proof}
  Let $v\in V$ and $\varepsilon>0$: since $\dom(A_2)$ is densely
  embedded in $V$, we can choose $u \in \dom(A_2)$ such that
  $\norm{v-u}_V < \varepsilon$.  Setting
  $u_\lambda:=(I+\lambda A_1)^{-1}u$, we have
  \[
    \norm{v-v_\lambda}_V \leq \norm{v-u}_V + \norm{u-u_\lambda}_V
    + \norm{u_\lambda - v_\lambda}_V.
  \]
  Since $u,v\in V$, we have $u_\lambda-v_\lambda=(I+\lambda A_2)^{-1}(u-v)$, and 
  recalling that $A_2$ is the part of $A$ in $H$ we have
  \[
  (u_\lambda-v_\lambda) + \lambda A(u_\lambda-v_\lambda)=u-v,
  \]
  where the identity holds in $V$ as well.  Taking the duality product
  with $A(u_\lambda-v_\lambda) \in V'$, by coercivity and boundedness
  of $A$ it follows that
  \[
    \prescript{}{V'}{\ip[\big]{A(u_\lambda-v_\lambda)}{u_\lambda-v_\lambda}_V}
    + \lambda \prescript{}{V'}{%
      \ip[\big]{A(u_\lambda-v_\lambda)}{A(u_\lambda-v_\lambda)}_V} \geq
    C\norm[\big]{u_\lambda-v_\lambda}_V^2
    + \lambda\norm[\big]{A(u_\lambda-v_\lambda)}^2
  \]
  and
  \[
    \prescript{}{V'}{\ip{A(u_\lambda-v_\lambda)}{u}}_V \leq
    \norm{A}_{\cL(V,V')}\norm{u_\lambda-v_\lambda}_V\norm{u}_V,
  \]
  hence
  \[
    C\norm{u_\lambda-v_\lambda}_V^2 + \lambda\norm{A(u_\lambda-v_\lambda)}^2 \leq
    \norm{A}_{\cL(V,V')}\norm{u_\lambda-v_\lambda}_V\norm{u}_V,
  \]
  which implies that there exists a constant $N>0$, independent of
  $\lambda$, such that
  \[
    \norm{u_\lambda-v_\lambda}_V\leq N\norm{u-v}_V,
  \]
  or, equivalently, that $(I+\lambda A_1)^{-1}$ is uniformly bounded
  in $V$ with respect to $\lambda$. This implies that
  \[  
  \norm{u_\lambda - v_\lambda}_V\leq N\norm{u-v}_V\leq N\varepsilon.
  \]
  It remains to estimate the term $\norm{u-u_\lambda}_V$.  Since
  $u \in \dom(A_2)$ and
  \[
  u_\lambda := (I+\lambda A_1)^{-1}u = (I+\lambda A_2)^{-1}u,
  \]
  one has $u_\lambda \in \dom(A_2^2)$, hence, recalling that $A_2$ is
  the part of $A$ in $H$,
  \[
  Au_\lambda + \lambda A(Au_\lambda) = Au
  \]
  in $H \embed V'$. Taking the duality pairing with $Au_\lambda \in
  \dom(A_2) \embed V$, one has
  \[
  \prescript{}{V'}{\ip[\big]{Au_\lambda}{Au_\lambda}_V}
  + \lambda \prescript{}{V'}{\ip[\big]{A(Au_\lambda)}{Au_\lambda}_V}
  = \prescript{}{V'}{\ip[\big]{Au}{Au_\lambda}_V},
  \]
  where
  \begin{gather*}
    \prescript{}{V'}{\ip[\big]{Au_\lambda}{Au_\lambda}_V} =
    \norm[\big]{Au_\lambda}^2, \qquad
    \prescript{}{V'}{\ip[\big]{A(Au_\lambda)}{Au_\lambda}_V}
    \geq C \norm[\big]{Au_\lambda}_V^2,\\
    \prescript{}{V'}{\ip[\big]{Au}{Au_\lambda}_V} =
    \ip[\big]{Au}{Au_\lambda} \leq \frac12 \norm[\big]{Au}^2 + \frac12
    \norm[\big]{Au_\lambda}^2,
  \end{gather*}
  hence
  \[
    \norm[\big]{Au_\lambda}^2 + \lambda C \norm[\big]{Au_\lambda}_V^2
    \leq \frac12 \norm[\big]{Au}^2 + \frac12
    \norm[\big]{Au_\lambda}^2,
  \]
  which implies that
  $\sqrt{\lambda} \norm[\big]{Au_\lambda}_V \leq N \norm[\big]{Au}$,
  with a constant $N$ independent of $\lambda$. Therefore, since
  $u \in \dom(A_2)$,
  \[
    \norm[\big]{u_\lambda - u}_V = \lambda \norm[\big]{Au_\lambda}_V
    \leq N \sqrt{\lambda} \norm[\big]{Au}.
  \]
  Choosing $\lambda$ such that
  $N \sqrt{\lambda} \norm[\big]{Au} < \varepsilon$, one has then
  \[
    \norm{v_\lambda-v}_V < (2+N)\varepsilon,
  \]
  from which the conclusion follows by arbitrariness of $\epsilon$.
\end{proof}

We recall that (see, e.g., \cite{KPS}) if two Banach spaces $F$ and
$G$ are continuously embedded in a separated topological vector space
$E$, their sum $F+G$ is defined as the subspace of $E$
\[
F+G := \bigl\{ u \in E:\, \exists f \in F, \, g \in G: \, u = f+g \bigr\}.
\]
Endowed with the norm
\[
\norm[\big]{u}_{F+G} := \inf_{u=f+g}\bigl( \norm{f}_F + \norm{g}_G \bigr),
\]
$F+G$ is a Banach space. Similarly, the intersection $F \cap G$ is
also a Banach space if endowed with the norm
\[
\norm[\big]{u}_{F \cap G} := \norm{u}_F + \norm{u}_G.
\]
Moreover, if $F \cap G$ is dense in both $F$ and $G$, then $F'$ and
$G'$ are continuously embedded in $(F \cap G)'$, and
$(F+G)'=F' \cap G'$.
In the following we shall deal with $F:=L^1(0,T; H)$ and
$G:=L^2(0,T; V')$, so that as ambient space $E$ one can simply take
$L^1(0,T; V')$. In this case $F \cap G$ is dense in both $F$ and $G$,
hence, by reflexivity of $V$,
\[
\bigl( L^1(0,T;H) + L^2(0,T;V') \bigr)' = L^\infty(0,T;H) \cap L^2(0,T;V).
\]

\begin{thm}
  \label{thm:Ito}
  Let $Y$, $v$ and $g$ be adapted processes such that 
  \begin{gather*}
    Y \in L^0(\Omega; L^\infty(0,T; H)\cap L^2(0,T; V)), \\\
    v \in L^0(\Omega; L^1(0,T; H) + L^2(0,T; V')), \\
    g \in L^0(\Omega; L^1(0,T; L^1(D))), \\
    \exists\,\alpha>0:\quad j(\alpha Y) + j^*(\alpha g) \in
    L^0(\Omega;L^1((0,T) \times D)).
  \end{gather*}
  Moreover, let $Y_0 \in L^0(\Omega,\cF_0; H)$ and $G$ be a
  progressive $\cL^2(U,H)$-valued process such that
  \[
  G \in L^0(\Omega; L^2(0,T;\cL^2(U,H))).
  \]
  If
  \[
    Y(t) + \int_0^tv(s)\,ds + \int_0^t g(s)\,ds = Y_0 +
    \int_0^tG(s)\,dW(s) \qquad \forall t \in [0,T] \quad \P\text{-a.s.}
  \]
  in $V' \cap L^1(D)$, then
  \begin{align*}
    &\frac12 \norm{Y(t)}^2 + \int_0^t\ip[\big]{v(s)}{Y(s)}\,ds
      + \int_0^t\!\!\int_D g(s,x) Y(s,x)\,dx\,ds \\
    &\hspace{3em} = \frac12 \norm{Y_0}^2
      + \frac12\int_0^t \norm{G(s)}^2_{\cL^2(U,H)}\,ds
      + \int_0^t Y(s)G(s)\,dW(s)
  \end{align*}
  for all $t \in [0,T]$ with probability one.
\end{thm}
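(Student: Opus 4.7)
The plan is to regularize the identity in the statement by applying the resolvent of $A_1$, invoke the classical It\^o formula for $H$-valued semimartingales on the smoothed equation, and pass to the limit using sub-Markovianity and ultracontractivity (hypotheses (iii)--(iv)) together with Lemma \ref{lm:cvV}. Since the integrability of the data is only $\P$-almost sure, I would first localize by a sequence of stopping times $(\tau_n)$ based on the $L^\infty(0,T;H)$ and $L^2(0,T;V)$ norms of $Y$, on the $L^1(0,T;H)+L^2(0,T;V')$ norm of $v$, on the $L^1((0,T)\times D)$ norms of $g$ and of $j(\alpha Y)+j^*(\alpha g)$, and on the $L^2(0,T;\cL^2(U,H))$ norm of $G$. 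On the stopped equation all data are deterministically bounded, and the general statement is recovered by letting $\tau_n\uparrow T$ $\P$-a.s.

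Fix $\lambda>0$ and set $J_\lambda:=(I+\lambda A_1)^{-m}$, with $m$ as in hypothesis (iv), so that $J_\lambda$ is bounded from $L^1(D)$ into $L^\infty(D)\embed H$, is a contraction on $H$, and commutes with both the Bochner and the stochastic integral. Applying $J_\lambda$ to the identity produces
\[
  J_\lambda Y(t) + \int_0^t J_\lambda v(s)\,ds + \int_0^t J_\lambda g(s)\,ds
  = J_\lambda Y_0 + \int_0^t J_\lambda G(s)\,dW(s),
\]
an equality in $H$ with drift terms that are Bochner integrable in $H$ and with $J_\lambda G \in L^2(0,T;\cL^2(U,H))$. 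The standard It\^o formula for the squared $H$-norm then gives the analogue of the claimed identity, with $Y,v,g,G,Y_0$ replaced by their $J_\lambda$-smoothings.

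The limit $\lambda \to 0$ is handled term by term. Pointwise convergence $J_\lambda Y(t)\to Y(t)$ in $H$ and a.e.\ convergence on $(0,T)\times D$ are standard resolvent facts, while $J_\lambda Y \to Y$ in $L^2(0,T;V)$ follows from Lemma \ref{lm:cvV} by dominated convergence. The stochastic-integral and quadratic-variation terms converge by It\^o isometry and dominated convergence, using contractivity of $J_\lambda$ on $\cL^2(U,H)$. Decomposing $v=v_1+v_2$ with $v_1\in L^1(0,T;H)$ and $v_2\in L^2(0,T;V')$ splits the pairing $\int_0^t \ip{J_\lambda v}{J_\lambda Y}\,ds$ into a piece handled by $H$-convergence of $J_\lambda Y$ and one handled by its $V$-convergence.

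The main obstacle will be the pairing $\int_0^t\!\int_D g(s,x)Y(s,x)\,dx\,ds$, since $g$ is only $L^1$-valued and $Y$ is not essentially bounded. I would control it via the Fenchel--Young inequality
\[
  \abs[\big]{(J_\lambda g)(J_\lambda Y)} \leq \alpha^{-2}
  \bigl( j^*(\alpha J_\lambda g) + j(\alpha J_\lambda Y) \bigr),
\]
combined with the pointwise Jensen-type estimates $j(\alpha J_\lambda Y) \leq J_\lambda j(\alpha Y)$ and $j^*(\alpha J_\lambda g) \leq J_\lambda j^*(\alpha g)$, which follow from sub-Markovianity of $(I+\lambda A_1)^{-1}$ (hypothesis (iii)) together with convexity and $j(0)=j^*(0)=0$. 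The standing hypothesis $j(\alpha Y)+j^*(\alpha g)\in L^1((0,T)\times D)$ then provides a $\lambda$-uniform $L^1$-dominating function, and Vitali's convergence theorem closes the argument.
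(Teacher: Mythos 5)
Your proposal follows essentially the same route as the paper's proof: regularize with the ultracontractive resolvent power $(I+\delta A_1)^{-m}$, apply the classical It\^o formula to the smoothed equation, and pass to the limit using Lemma~\ref{lm:cvV}, the splitting $v=v_1+v_2$, and Jensen's inequality for sub-Markovian operators together with Vitali's theorem for the $gY$ pairing. Two minor remarks: your two-sided bound $\abs{(J_\lambda g)(J_\lambda Y)}\leq \alpha^{-2}\bigl(j^*(\alpha J_\lambda g)+j(\alpha J_\lambda Y)\bigr)$ is not a consequence of Fenchel--Young alone but also requires the standing assumption $\limsup_{\abs{r}\to\infty} j(r)/j(-r)<\infty$ (yielding $j(r)\lesssim 1+j(-r)$) to control the negative branch, exactly as the paper makes explicit; and the preliminary localization by stopping times is harmless but unnecessary, since every convergence except that of the stochastic integral is pathwise and the latter is obtained in probability (hence a.s.\ along a subsequence).
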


\begin{proof}
  Since the resolvent of $A_1$ is
  ultracontractive by assumption, there exists $m\in\enne$ such that
  \[  
  (I+\delta A_1)^{-m}: L^1(D) \to H \qquad\forall \delta>0.
  \]
  Using a superscript $\delta$ to denote the action of
  $(I+\delta A_1)^{-m}$, we have
  \[
    Y^\delta(t) + \int_0^tv^\delta(s)\,ds + \int_0^t g^\delta(s)\,ds
    = Y^\delta_0 + \int_0^tG^\delta(s)\,dW(s)
  \]
  where $g^\delta \in L^1(0,T; H)$, hence the classical It\^o's
  formula yields, for every $\delta>0$,
  \begin{align*}
    &\frac12 \norm{Y^\delta(t)}^2
      + \int_0^t\ip[\big]{v^\delta(s)}{Y^\delta(s)}\,ds
      + \int_0^t\!\!\int_D g^\delta(s,x) Y^\delta(s,x)\,dx\,ds \\
    &\hspace{3em} = \frac12 \norm{Y^\delta_0}^2
      + \frac12\int_0^t \norm{G^\delta(s)}^2_{\cL^2(U,H)}\,ds
      + \int_0^t Y^\delta(s)G^\delta(s)\,dW(s).
  \end{align*}
  Let us pass to the limit as $\delta \to 0$. Since the resolvent of
  $A_1$ coincides on $H$ with the resolvent of $A_2$, which converges
  to the identity in $\cL(H)$ in the strong operator topology, we
  immediately infer that
  \begin{alignat*}{3}
    Y^\delta(t) &\longto Y(t)  & &\quad\text{in } H \quad\forall t\in[0,T],\\
    g^\delta &\longto g  & &\quad\text{in } L^1(0,T; L^1(D)),\\
    Y_0^\delta &\longto Y_0  & &\quad\text{in } H,\\
    G^\delta &\longto G  & &\quad\text{in } L^2(0,T; \cL^2(U,H))
  \end{alignat*}
  where the last statement, which follows by well-known continuity properties
  of Hilbert-Schmidt operators, also implies
  \[
    \int_0^t \norm{G^\delta(s)}^2_{\cL^2(U,H)}\,ds \longto
    \int_0^t \norm{G(s)}^2_{\cL^2(U,H)}\,ds.
  \]
  Moreover, by the previous lemma we have
  \[
    Y^\delta \longto Y  \quad\text{in } L^2(0,T; V),
  \]
  and $Y \in L^\infty(0,T;H)$ and the contractivity in $H$ of the
  resolvent of $A_1$ immediately imply, by the dominated convergence
  theorem, that $Y^\delta \to Y$  weakly* in $L^\infty(0,T;H)$.
  Therefore, by reflexivity of $V$,
  \[
    Y^\delta \longto Y  \quad\text{weakly* in } 
    L^\infty(0,T;H) \cap L^2(0,T;V).
  \]
  Since $v \in L^1(0,T; H) + L^2(0,T; V')$, we have that $v=v_1+v_2$,
  with $v_1 \in L^1(0,T; H)$ and $v_2 \in L^2(0,T; V')$.  In this
  case $v^\delta$ has to be interpreted as
  \[
  v^\delta:=(I+\delta A_1)^{-m}v_1 + (I+\delta A)^{-m}v_2.
  \]
  Note that this is very natural since $A_1$ and $A$ coincide on
  $\dom(A_1) \cap V$.  By the properties of the resolvent it easily
  follows that
  \[
  v_1^\delta \longto v_1  \quad \text{ in } L^1(0,T; H).
  \]
  Moreover, since $A^{-1}v_2 \in L^2(0,T; V)$ and
  $A^{-1}v_2^\delta = (I+\delta A_2)^{-m}A^{-1}v_2$, by
  Lemma~\ref{lm:cvV} we have that $A^{-1}v_2^\delta \to A^{-1}v_2$ in
  $L^2(0,T; V)$, hence also, by continuity of $A$,
  \[
  v_2^\delta \longto v_2  \quad \text{ in } L^2(0,T; V').
  \]
  The convergences of $v^\delta$ and $Y^\delta$ just proved thus imply
  \[
    \int_0^t\ip[\big]{v^\delta(s)}{Y^\delta(s)}\,ds \longto
    \int_0^t\ip[\big]{v(s)}{Y(s)}\,ds
  \]
  for all $t \in [0,T]$.

  We are now going to prove that
  $\bigl( (Y^\delta G^\delta) \cdot W - (YG) \cdot W \bigr)^*_T \to 0$
  in probability. Setting $M_\delta := (Y^\delta G^\delta) \cdot W$
  and $M := (YG) \cdot W$, it is well known that it suffices to show
  that the quadratic variation of $M_\delta-M$ converges to $0$ in
  probability. One has
  \begin{align*}
    \bigl[ M_\delta-M,M_\delta-M \bigr]
    &= \norm[\big]{Y^\delta G^\delta - YG}^2_{L^2(0,T;\cL^2(U,\erre))}\\
    &\leq \norm[\big]{Y^\delta G^\delta - Y^\delta G}^2_{L^2(0,T;\cL^2(U,\erre))}
      + \norm[\big]{Y^\delta G - YG}^2_{L^2(0,T; \cL^2(U,\erre))}\\
    &\leq\norm{Y}^2_{L^\infty(0,T; H)}\norm{G^\delta-G}^2_{L^2(0,T; \cL^2(U,H))}+
      \norm{Y^\delta G - YG}^2_{L^2(0,T; \cL^2(U,\erre))},
  \end{align*}
  where the convergence to zero of the first term in the last
  expression has already been proved, and
  \[
    \norm{Y^\delta G - YG}^2_{L^2(0,T; \cL^2(U,\erre))}%
    \leq \int_0^T \norm[\big]{Y^\delta(s)-Y(s)}^2%
    \norm[\big]{G(s)}^2_{\cL^2(U,H)}\,ds \longto 0,
  \]
  by the dominated convergence theorem, because $Y^\delta \to Y$
  pointwise in $H$ and
  $\norm{Y^\delta - Y} \leq 2 \norm{Y} \in L^\infty(0,T)$. We have
  thus shown that
  \[
    \int_0^\cdot Y^\delta(s) G^\delta(s)\,dW(s) \longto 
    \int_0^\cdot Y(s) G(s)\,dW(s)
  \]
  in probability, hence $\P$-a.s. along a subsequence of $\delta$.

  Finally, it is clear that $Y^\delta g^\delta \to Yg$ in measure in
  $(0,T) \times D$, and that, thanks to the assumptions on $j$,
  \[
    \pm \alpha^2 Y^\delta g^\delta \leq %
    j(\pm\alpha Y^\delta) + j^*(\alpha g^\delta) %
    \lesssim 1 + j(\alpha Y^\delta) + j^*(\alpha g^\delta),
  \]
  where the second inequality follows from the fact that, thanks to
  the assumption on the growth of $j$ at $\infty$, there exists a
  constant $M>0$ such that
  \[
  j(r) \leq M \bigl( 1+ j(-r) \bigr) \qquad \forall r \in \erre.
  \]
  Jensen's inequality for sub-Markovian operators
  (see, e.g., \cite{Haa07}) thus yields
  \[
    j(\alpha Y^\delta) + j^*(\alpha g^\delta) \leq %
    (I+\delta A_1)^{-m} \bigl( j(\alpha Y) + j^*(\alpha g) \bigr),
  \]
  so that
    \[
      \alpha^2 \abs[\big]{Y^\delta g^\delta} \lesssim %
    1 + (I+\delta A_1)^{-m} \bigl( j(\alpha Y) + j^*(\alpha g) \bigr).
    \]
  Since $j(\alpha Y) + j^*(\alpha g) \in L^1((0,T) \times D)$ by
  assumption, the contractivity of the resolvent in $L^1(D)$ and the
  dominated convergence theorem imply that the right-hand side in the
  last inequality is convergent in $L^1((0,T) \times D)$. Hence
  $(Y^\delta g^\delta)_\delta$ is uniformly integrable and, by Vitali's
  theorem,
  \[
    \int_0^t\!\!\int_D g^\delta(s,x) Y^\delta(s,x)\,dx\,ds \longto
    \int_0^t\!\!\int_D g(s,x) Y(s,x)\,dx\,ds
  \]
  for all $t \in [0,T]$. The proof is thus completed.
\end{proof}

As a first important consequence of the generalized It\^o formula we
show that (the first component of) strong solutions are pathwise
strongly continuous in $H$.
\begin{thm}
  \label{thm:cont}
  Let $(X,\xi)$ be the unique strong solution to \eqref{eq:0}
  belonging to $\cJ_2$. Then $X$ has strongly continuous paths in $H$,
  i.e.~there exists $\Omega'\in\cF$ with $\P(\Omega')=1$ such that
  \[
  X(\omega) \in C([0,T]; H) \qquad\forall \omega \in \Omega'.
  \]
\end{thm}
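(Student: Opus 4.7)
The plan is to apply the generalized It\^o formula of Theorem~\ref{thm:Ito} to the $\cJ_2$-solution $(X,\xi)$, use it to deduce pathwise continuity of $t \mapsto \norm{X(t)}^2$, and then combine this with the weak pathwise continuity furnished by Theorem~\ref{thm:WP} to upgrade to strong continuity in $H$. The crucial Hilbert-space fact is that $x_n \rightharpoonup x$ together with $\norm{x_n} \to \norm{x}$ forces $x_n \to x$ in norm, via the identity $\norm{x_n-x}^2 = \norm{x_n}^2 - 2\ip{x_n}{x} + \norm{x}^2$.

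First I would verify the hypotheses of Theorem~\ref{thm:Ito} with $Y:=X$, $v:=AX$, $g:=\xi$, $Y_0:=X_0$, and $G:=B(\cdot,X)$. Since $A\in\cL(V,V')$ and $X\in L^2(0,T;V)$ a.s., one has $AX\in L^0(\Omega;L^2(0,T;V'))$, which fits the requirement $v\in L^1(0,T;H)+L^2(0,T;V')$. Membership of $X$ in $L^\infty(0,T;H)\cap L^2(0,T;V)$ a.s.\ and of $\xi$ in $L^1(0,T;L^1(D))$ a.s.\ are built into the notion of strong solution; the integrability $j(X)+j^*(\xi)\in L^0(\Omega;L^1((0,T)\times D))$ is part of the definition of $\cJ_2$, so the hypothesis involving $j$ and $j^*$ holds with $\alpha=1$. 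The linear growth of $B$ together with $X\in L^\infty(0,T;H)$ a.s.\ yields $B(\cdot,X)\in L^0(\Omega;L^2(0,T;\cL^2(U,H)))$. Since $(X,\xi)$ solves \eqref{eq:0}, the required integral identity in $V'\cap L^1(D)$ is in place.

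Applying Theorem~\ref{thm:Ito} then provides, on a $\P$-full event, for every $t\in[0,T]$,
\begin{align*}
  \tfrac12\norm{X(t)}^2 &= \tfrac12\norm{X_0}^2 - \int_0^t\ip{AX(s)}{X(s)}\,ds - \int_0^t\!\!\int_D \xi(s,x)X(s,x)\,dx\,ds \\
  &\quad + \tfrac12\int_0^t\norm[\big]{B(s,X(s))}_{\cL^2(U,H)}^2\,ds + \int_0^t X(s)B(s,X(s))\,dW(s).
\end{align*}
Each summand on the right-hand side is pathwise continuous in $t$: the first Lebesgue integral because $\ip{AX}{X}\in L^1(0,T)$ a.s.\ (from $X\in L^2(0,T;V)$ and $A\in\cL(V,V')$); the second because $\xi\in\partial j(X)$ forces the pointwise equality $\xi X = j(X)+j^*(\xi)$, which lies in $L^1((0,T)\times D)$ a.s.\ by the $\cJ_2$ condition; the third trivially; and the stochastic integral has continuous sample paths. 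Therefore $t\mapsto\norm{X(t)}^2$, and hence $t\mapsto\norm{X(t)}$, is continuous a.s.

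Intersecting this full event with the full event on which $X$ is weakly continuous in $H$ (provided by Theorem~\ref{thm:WP}) yields the desired $\Omega'$: for $\omega\in\Omega'$ and any $t_n\to t$ in $[0,T]$, one has $X(t_n)\rightharpoonup X(t)$ and $\norm{X(t_n)}\to\norm{X(t)}$, so the Hilbert-space identity above gives $X(t_n)\to X(t)$ in norm. The main substantive step is the verification and application of Theorem~\ref{thm:Ito}; the weak-to-strong passage is then entirely routine.
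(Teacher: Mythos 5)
Your proposal is correct and follows essentially the same route as the paper: apply Theorem~\ref{thm:Ito} to $(X,\xi)$, observe that all terms on the right-hand side (Lebesgue integrals of a.s.\ $L^1(0,T)$ integrands, using $\xi X=j(X)+j^*(\xi)$, plus a pathwise continuous stochastic integral) are continuous in $t$, and then upgrade the weak continuity from Theorem~\ref{thm:WP} to strong continuity via norm convergence in the Hilbert space $H$. The only cosmetic difference is that you invoke the parallelogram-type identity directly where the paper cites uniform convexity of $H$; these are the same fact.
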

\begin{proof}
  Let $r \in [0,T]$. We have to prove that $X(t) \to X(r)$ in $H$ as
  $t \to r$, $t \in [0,T]$.  It follows from Theorem~\ref{thm:Ito}
  that for every $t\in[0,T]$ there exists $\Omega' \in \cF_0$ with
  $\P(\Omega')=1$ such that
  \begin{align*}
    \frac12\norm{X(t)}^2 - \frac12\norm{X(r)}^2
    &= 
      -\int_r^t \ip{AX(s)}{X(s)}\,ds - \int_r^t\!\!\int_D\xi(s)X(s)\,ds\\
    &+ \frac12 \int_r^t \norm[\big]{B(s)}^2_{\cL^2(U,H)}
      + \int_r^tX(s)B(s,X(s))\,dW(s)
  \end{align*}
  everywhere on $\Omega'$. By the definition of strong solution, we
  can assume that $X \in L^\infty(0,T; H)$, $AX\in L^2(0,T; V')$ and
  $j(X)+j^*(\xi) \in L^1((0,T)\times D)$, as well as that
  $B(\cdot, X) \in L^2(0,T;\cL^2(U,H))$, everywhere on $\Omega'$.
  Since $X\xi = j(X)+j^*(\xi)$, it
  follows that the process
  \[
    [0,T] \ni s \longmapsto \psi(s) := -\ip{AX(s)}{X(s)}
    - \int_D\xi(s)X(s) + \frac12 \norm[\big]{B(s,X(s))}^2_{\cL^2(U,H)}
  \]
  belongs to $L^1(0,T)$ everywhere on $\Omega'$. Therefore, writing
  \[
    \frac12\norm{X(t)}^2 - \frac12\norm{X(r)}^2 = \int_r^t\phi(s)\,ds
    + \int_r^t X(s) B(s,X(s))\,dW(s),
  \]
  since $\psi \in L^1(0,T)$ and the stochastic integral has continuous
  trajectories, we have, as $t \to r$,
  \[
    \norm{X(t)}^2 - \norm{X(r)}^2 \to 0,
  \]
  so that $\norm{X(t)}\to\norm{X(r)}$.  Furthermore, $X(t) \to X(r)$
  weakly in $H$ as $t \to r$ by Theorem~\ref{thm:WP}, hence, since $H$
  is uniformly convex, we conclude that $X(t) \to X(r)$ in $H$ (cf.,
  e.g., \cite[Proposition~3.32]{Bre-FA}).
\end{proof}


\section{Existence and uniqueness}
\label{sec:X0}
We begin with a simple estimate that will be used several times.
\begin{lemma}
  \label{lm:BDGY}
  Let $F$ and $G$ be progressive process with values in $H$ and
  $\cL^2(U,H)$, respectively, such that $FG$ is integrable with
  respect to $W$. For any numbers $p$, $\varepsilon>0$ and any
  stopping time $S$ one has
  \[
    \norm[\big]{\bigl( (FG) \cdot W \bigr)^*_S}_{L^p(\Omega)}
    \lesssim \varepsilon \norm[\big]{F_S^*}^2_{L^{2p}(\Omega)}
      + \frac{1}{\varepsilon} \norm[\big]{%
      G\ind{[\![0,S]\!]}}^2_{L^{2p}(\Omega;L^2(0,T;\cL^2(U,H)))}
  \]
\end{lemma}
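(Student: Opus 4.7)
The estimate has the structure of a standard Burkholder--Davis--Gundy plus Young argument. My plan is as follows.

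First, I would apply the Burkholder--Davis--Gundy inequality to the stopped stochastic integral $((FG)\cdot W)^S = (FG\ind{[\![0,S]\!]})\cdot W$, which is valid for every $p>0$. This yields, up to a universal constant depending only on $p$,
\[
  \E\bigl[\bigl(((FG)\cdot W)^*_S\bigr)^p\bigr]
  \lesssim \E\!\left[\left(\int_0^S \norm{F(s)G(s)}_{\cL^2(U,\erre)}^2\,ds\right)^{p/2}\right].
\]
The next step is the pointwise Hilbert--Schmidt bound $\norm{F(s)G(s)}_{\cL^2(U,\erre)} \le \norm{F(s)}\,\norm{G(s)}_{\cL^2(U,H)}$, which uses that $\cL^2$ is a two-sided ideal (as recalled in \S\ref{sec:ass}). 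Factoring out $F^*_S$ gives
\[
  \E\bigl[\bigl(((FG)\cdot W)^*_S\bigr)^p\bigr]
  \lesssim \E\!\left[(F^*_S)^p \left(\int_0^T \norm{G(s)\ind{[\![0,S]\!]}}_{\cL^2(U,H)}^2\,ds\right)^{p/2}\right].
\]

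Next I would apply Cauchy--Schwarz in $\Omega$ to split the two factors:
\[
  \E\bigl[\bigl(((FG)\cdot W)^*_S\bigr)^p\bigr]
  \lesssim \bigl(\E[(F^*_S)^{2p}]\bigr)^{1/2}
  \left(\E\!\left[\left(\int_0^T \norm{G(s)\ind{[\![0,S]\!]}}_{\cL^2(U,H)}^2\,ds\right)^{p}\right]\right)^{1/2}.
\]
Taking the $(1/p)$-th power and recognizing that
\[
  \left(\E\!\left[\left(\int_0^T \norm{G\ind{[\![0,S]\!]}}_{\cL^2(U,H)}^2\,ds\right)^{p}\right]\right)^{1/(2p)}
  = \norm[\big]{G\ind{[\![0,S]\!]}}_{L^{2p}(\Omega;L^2(0,T;\cL^2(U,H)))},
\]
we obtain the multiplicative bound
\[
  \norm[\big]{((FG)\cdot W)^*_S}_{L^p(\Omega)}
  \lesssim \norm{F^*_S}_{L^{2p}(\Omega)}\,
  \norm[\big]{G\ind{[\![0,S]\!]}}_{L^{2p}(\Omega;L^2(0,T;\cL^2(U,H)))}.
\]

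Finally, I would invoke Young's inequality $ab \le \varepsilon a^2 + \frac{1}{4\varepsilon} b^2$ (with $\varepsilon$ absorbing the BDG constant), obtaining the stated inequality. No step is really an obstacle here; the only points requiring care are (i) using BDG in the full range $p>0$, not just $p\ge 1$, and (ii) keeping track that $\|Z\|_{L^p(\Omega)}^{1/2}$ with $Z$ a squared $L^2$-norm becomes an $L^{2p}(\Omega;L^2)$-norm, which is exactly what allows the asymmetry between $L^p$ on the left and $L^{2p}$ on the right to work out dimensionally.
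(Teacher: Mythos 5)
Your proposal is correct and follows essentially the same route as the paper: BDG for the stopped integral, the two-sided ideal property of Hilbert--Schmidt operators to bound the quadratic variation, and Young's inequality to split the product. The only cosmetic difference is that you interpose a Cauchy--Schwarz step in $\Omega$ to get a multiplicative bound and then apply Young to the product of $L^{2p}$-norms, whereas the paper applies Young pointwise in $\omega$ and only then takes the $L^p(\Omega)$-(quasi)norm; both orderings yield the stated estimate.
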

\begin{proof}
  The BDG inequality asserts that
  \[
    \norm[\big]{\bigl( (FG) \cdot W \bigr)^*_S}_{L^p(\Omega)}
    \eqsim \norm[\big]{[(FG) \cdot W,(FG) \cdot W]_S^{1/2}}_{L^p(\Omega)},
  \]
  where, by the ideal property of Hilbert-Schmidt operators and
  Young's inequality,
  \begin{align*}
    [(FG) \cdot W,(FG) \cdot W]_S^{1/2}
    &= \biggl( \int_0^S \norm[\big]{F(t)G(t)}_{\cL^2(U,\erre)}^2\,dt
      \biggr)^{1/2}\\
    &\leq \biggl( \int_0^S \norm[\big]{F(t)}^2
      \norm[\big]{G(t)}^2_{\cL^2(U,H)}\,dt \biggr)^{1/2}\\
    &\leq F_S^* \biggl( \int_0^S \norm[\big]{G(t)}^2_{\cL^2(U,H)}\,dt
      \biggr)^{1/2}\\
    &\leq \varepsilon F_S^{*2} + \frac{1}{\varepsilon}
      \int_0^S \norm[\big]{G(t)}^2_{\cL^2(U,H)}\,dt.
  \end{align*}
  Therefore, taking the $L^p(\Omega)$-(quasi)norm on both sides,
  \[
    \norm[\big]{\bigl( (FG) \cdot W \bigr)^*_S}_{L^p(\Omega)}
    \lesssim \varepsilon \norm[\big]{F_S^*}^2_{L^{2p}(\Omega)}
      + \frac{1}{\varepsilon} \norm[\big]{%
        G\ind{[\![0,S]\!]}}^2_{L^{2p}(\Omega;L^2(0,T;\cL^2(U,H)))}
    \qedhere
  \]
\end{proof}

\medskip

Let $(X,\xi)$ and $(Y,\eta) \in \cJ_0$ be strong solutions, in the
sense of Definition~\ref{def:sol}, to the equation
\[
dX + AX\,dt + \beta(X)\,dt \ni B(\cdot,X)\,dW
\]
with initial conditions $X_0$ and $Y_0$, both elements of
$L^0(\Omega,\cF_0,\P;H)$, respectively. Here and throughout this
section we assume that $B$ is locally Lipschitz-continuous in the
sense of assumption (B2).

Let us also introduce the sequence of stopping times
$(T_n)_{n\in\enne}$ defined as
\[
  T_n := \inf \bigl\{ t \geq 0: \norm{X_\Gamma(t)} \geq n \; \text{ or }
  \; \norm{Y_\Gamma(t)} \geq n \bigr\} \wedge T.
\]
Here and in the following, for any $\Gamma \in \cF_0$, we shall denote
multiplication by $\ind{\Gamma}$ by a subscript $\Gamma$. Even though
the stopping times $T_n$ depend on $\Gamma$, we shall not indicate
this explicitly to avoid making the notation too cumbersome.

The stopping times $T_n$ are well defined because, by definition of
$\cJ_0$, $X$ and $Y$ have continuous paths with values in
$H$. Moreover, $T_n \neq 0$ for sufficiently large $n$.

\medskip

The estimate in the following lemma is an essential tool, from which,
for instance, uniqueness and a local property of solutions will follow
as easy corollaries.
\begin{lemma}
  \label{lm:gm}
  Let $\Gamma \in \cF_0$ be such that $X_{0\Gamma},\, Y_{0\Gamma} \in
  L^2(\Omega,\cF_0,\P;H)$. One has, for every $n \in \enne$,
  \[
    \E\bigl( X_\Gamma- Y_\Gamma \bigr)^{*2}_{T_n} \lesssim
    \E\norm[\big]{\pG{X_0}-\pG{Y_0}}^2,
  \]
  with implicit constant depending on $T$ and on the Lipschitz
  constant of $B$ in the ball in $H$ of radius $n$.
\end{lemma}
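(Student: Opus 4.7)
The plan is to apply the generalized It\^o formula of Theorem~\ref{thm:Ito} to the difference $Z := X_\Gamma - Y_\Gamma$, exploit monotonicity of $A$ and $\beta$ to discard the two deterministic-drift terms, control the resulting stochastic integral via Lemma~\ref{lm:BDGY}, and close with Gronwall.

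Since $\Gamma \in \cF_0$, multiplying the strong-solution equations for $X$ and $Y$ by $\ind{\Gamma}$ and subtracting gives
\[
Z(t) + \int_0^t A Z(s)\,ds + \int_0^t (\xi-\eta)_\Gamma(s)\,ds
= Z_0 + \int_0^t \bigl(B(s,X)-B(s,Y)\bigr)_\Gamma\,dW(s),
\]
with $Z_0 := \pG{X_0}-\pG{Y_0}$. I would check the hypotheses of Theorem~\ref{thm:Ito}: $Z \in L^\infty(0,T;H) \cap L^2(0,T;V)$ a.s.\ (from $X,Y \in \cJ_0$), $AZ \in L^2(0,T;V')$, $(\xi-\eta)_\Gamma \in L^1((0,T) \times D)$, and $j(\alpha Z) + j^*\bigl(\alpha(\xi-\eta)_\Gamma\bigr) \in L^1$ for some $\alpha>0$. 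The latter follows on $\Gamma$ by convexity ($j(0)=0$, so $j(\frac14(X-Y)) \leq \frac12 j(\frac12 X) + \frac12 j(-\frac12 Y) \leq \frac14(j(X)+j(-Y))$), combined with the growth bound $j(-r)\leq M(1+j(r))$ that is a consequence of the $\limsup$ hypothesis on $j$; the analogous estimate for $j^*$ is dual. Off $\Gamma$ both quantities vanish identically.

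Applying Theorem~\ref{thm:Ito} and dropping $\int_0^t\ip{AZ}{Z}\,ds\geq 0$ (coercivity) together with $\int_0^t\!\!\int_D (\xi-\eta)_\Gamma Z\,dx\,ds\geq 0$ (monotonicity of $\beta$, noting that on $\Gamma^c$ both factors vanish), I obtain
\[
\tfrac12 \norm{Z(t)}^2 \leq \tfrac12 \norm{Z_0}^2
+ \tfrac12 \int_0^t \norm[\big]{(B(s,X)-B(s,Y))_\Gamma}^2_{\cL^2(U,H)}\,ds
+ \int_0^t Z\,(B(s,X)-B(s,Y))_\Gamma\,dW.
\]
On $[\![0,T_n]\!]$ the definition of $T_n$ ensures $\norm{X_\Gamma},\norm{Y_\Gamma}\leq n$, hence by (B2),
\[
\norm[\big]{(B(s,X)-B(s,Y))_\Gamma}_{\cL^2(U,H)} \leq N_n \norm{Z(s)}
\quad\text{on } [\![0,T_n]\!].
\]
Passing to $\sup_{t\leq T_n}$ and taking expectations, then applying Lemma~\ref{lm:BDGY} with $p=1$ and $\varepsilon$ chosen small enough to absorb $\varepsilon\,\E Z^{*2}_{T_n}$ into the left-hand side, yields
\[
\E Z^{*2}_{T_n} \lesssim \E\norm{Z_0}^2 + N_n^2 \int_0^T \E Z^{*2}_{T_n \wedge s}\,ds,
\]
and Gronwall's lemma delivers $\E Z^{*2}_{T_n}\lesssim e^{CTN_n^2}\,\E\norm{\pG{X_0}-\pG{Y_0}}^2$, with implicit constant depending only on $T$ and $N_n$.

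The main obstacle is the verification of the integrability condition $j(\alpha Z)+j^*(\alpha(\xi-\eta)_\Gamma)\in L^1$ required for Theorem~\ref{thm:Ito}: without the reflection-type bound on $j$ implied by the $\limsup$ hypothesis, this would fail in general. Once this is secured, the rest is a monotonicity-plus-BDG-plus-Gronwall routine, where the stopping time $T_n$ serves only to convert the local Lipschitz assumption (B2) into a global one on the relevant time interval.
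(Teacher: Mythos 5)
Your argument is correct and follows essentially the same route as the paper: multiply by $\ind{\Gamma}$, apply Theorem~\ref{thm:Ito} to the difference, discard the monotone terms, use (B2) on $[\![0,T_n]\!]$, and close with Lemma~\ref{lm:BDGY} and Gronwall. The only difference is that you explicitly verify the integrability hypothesis $j(\alpha Z)+j^*(\alpha(\xi-\eta)_\Gamma)\in L^1$ of Theorem~\ref{thm:Ito} via convexity and the growth condition on $j$, a point the paper's proof passes over in silence; your verification is sound (the dual bound for $j^*$ does require exploiting the freedom in $\alpha$, but this works out).
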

\begin{proof}
  One has
  \[
    (X-Y) + \int_0^t A(X-Y)\,ds + \int_0^t (\xi-\eta)\,ds = X_0-Y_0
    + \int_0^t (B(X)-B(Y))\,dW.
  \]
  We recall that, for any $\cF_0$-measurable random variable $\zeta$
  and any stochastically integrable process $K$, one has
  $\zeta(K \cdot W) = (\zeta K)\cdot W$. Therefore
  \[
    \pG{(X-Y)} + \int_0^t A\pG{(X-Y)}\,ds
    + \int_0^t \pG{(\xi-\eta)}\,ds = \pG{(X_0-Y_0)}
    + \int_0^t \pG{(B(X)-B(Y))}\,dW.
  \]
  The It\^o formula of Theorem~\ref{thm:Ito} yields
  \begin{align*}
    &\norm[\big]{\pG{X}-\pG{Y}}^2(t \wedge T_n)
    + 2\int_0^{t \wedge T_n}%
      \ip[\big]{A(\pG{X}-\pG{Y})}{\pG{X}-\pG{Y})}\,ds
      + 2\int_0^{t \wedge T_n}\!\!\int_D \pG{((X-Y)(\xi-\eta))}\,ds\\
    &\qquad = \norm[\big]{\pG{X_0} - \pG{Y_0}}^2
      + \int_0^{t \wedge T_n} \norm[\big]{\pG{(B(X)-B(Y))}}^2_{\cL^2(U,H)}\,ds\\
    &\qquad\quad + 2\int_0^{t \wedge T_n} \pG{(X-Y)} \pG{(B(X)-B(Y))}\,dW,
  \end{align*}
  where (a) the second and term terms on the left-hand side are
  positive by monotonicity of $A$ and $\beta$, and by the assumption
  that $\xi \in \beta(X)$, $\eta \in \beta(Y)$ a.e. in
  $\Omega \times (0,T) \times D$; (b) one has
  \[
  \pG{(B(X)-B(Y))} = \ind{\Gamma} \bigl( B(\pG{X}) - B(\pG{Y}) \bigr),
  \]
  hence
  \[
    \ind{[\![0,T_n]\!]} \norm[\big]{\pG{(B(X)-B(Y))}}^2_{\cL^2(U,H)}
    \lesssim_n \ind{[\![0,T_n]\!]} \, \ind{\Gamma}
    \norm[\big]{\pG{X}-\pG{Y}}.
  \]
  Taking supremum in time and expectation,
  \begin{align*}
    \E \bigl( \pG{X}^{T_n} - \pG{Y}^{T_n} \bigr)^{*2}_{t}
    &\lesssim \E\norm[\big]{\pG{X_0}-\pG{Y_0}}^2
    + \int_0^t \E \bigl( \pG{X}^{T_n} - \pG{Y}^{T_n} \bigr)^{*2}_{s}\,ds\\
    &\quad + \E\sup_{s \leq t} \int_0^{s \wedge T_n}%
      \pG{(X-Y)} \pG{(B(X)-B(Y))}\,dW,
  \end{align*}
  where, by Lemma~\ref{lm:BDGY}, the last term on the right-hand side
  is bounded by
  \begin{align*}
    &\varepsilon \E \bigl( \pG{X}^{T_n} - \pG{Y}^{T_n} \bigr)^{*2}_{t}
    + N(\varepsilon) \E\int_0^{t \wedge T_n}%
      \norm[\big]{\pG{(B(X)-B(Y))}}^2_{\cL^2(U,H)}\,ds\\
    &\qquad \leq \varepsilon \E\bigl( \pG{X}^{T_n}-\pG{Y}^{T_n} \bigr)^{*2}_{t}
      + N(\varepsilon,n) \int_0^t \E%
      \bigl( \pG{X}^{T_n} - \pG{Y}^{T_n} \bigr)^{*2}_{s}\,ds.
  \end{align*}
  Choosing $\varepsilon$ small enough, it follows by Gronwall's
  inequality that
  \[
    \E\bigl( X_\Gamma - Y_\Gamma \bigr)^{*2}_{T_n} =
    \E\bigl( \pG{X}^{T_n}-\pG{Y}^{T_n} \bigr)^{*2}_{T} \lesssim
    \E\norm[\big]{\pG{X_0}-\pG{Y_0}}^2,
  \]
  with an implicit constant that depends on $T$ and on the Lipschitz
  constant of $B$ on the ball in $H$ of radius $n$.
\end{proof}

\begin{coroll}
  \label{cor:uniq}
  Uniqueness of strong solutions in $\cJ_0$ holds for \eqref{eq:0}.
\end{coroll}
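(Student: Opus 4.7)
The plan is to reduce the uniqueness claim in $\cJ_0$ to the $L^2$-type estimate of Lemma~\ref{lm:gm} by localizing in $\omega$ with a suitable sequence of $\cF_0$-measurable sets. Let $(X,\xi)$ and $(Y,\eta) \in \cJ_0$ be two strong solutions of \eqref{eq:0} sharing the same initial datum $X_0=Y_0 \in L^0(\Omega,\cF_0;H)$. For each $k \in \enne$ set
\[
  \Gamma_k := \bigl\{ \omega \in \Omega : \norm{X_0(\omega)} \leq k \bigr\} \in \cF_0,
\]
so that $X_{0\Gamma_k} = Y_{0\Gamma_k} \in L^\infty(\Omega;H) \subset L^2(\Omega;H)$.

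With $\Gamma=\Gamma_k$, Lemma~\ref{lm:gm} is applicable and gives, for every $n \in \enne$,
\[
  \E \bigl( X_{\Gamma_k} - Y_{\Gamma_k} \bigr)^{*2}_{T_n} \lesssim
  \E \norm[\big]{\pG{X_0}-\pG{Y_0}}^2 = 0,
\]
where the associated stopping times $T_n$ are those in the statement of Lemma~\ref{lm:gm}. Therefore $X_{\Gamma_k} = Y_{\Gamma_k}$ on $[0,T_n]$ almost surely. Since $(X,\xi)$ and $(Y,\eta)$ belong to $\cJ_0$, the paths of $X$ and $Y$ lie in $C([0,T];H)$ with probability one, so $\norm{X(\cdot)}$ and $\norm{Y(\cdot)}$ are almost surely bounded on $[0,T]$; by definition of $T_n$ this forces $T_n \nearrow T$ almost surely as $n\to\infty$. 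Letting $n \to \infty$ and using continuity of paths in $H$ yields $X_{\Gamma_k} = Y_{\Gamma_k}$ on $[0,T]$ almost surely, i.e.\ $X = Y$ on $\Gamma_k$ up to a $\P$-null set.

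Finally, since $X_0$ takes values in $H$ almost surely, one has $\Omega = \bigcup_{k\in\enne} \Gamma_k$ up to a null set, so taking the union of the exceptional sets obtained above produces a single $\P$-null set off of which $X=Y$ on $[0,T]$. The only subtle point is the interplay between the $\Gamma$-dependence of the stopping times $T_n$ and the localization, but this is harmless: on $\Omega \setminus \Gamma_k$ one has $X_{\Gamma_k} = Y_{\Gamma_k} = 0$ and hence $T_n = T$ there, whereas on $\Gamma_k$ the $T_n$ coincide with the natural localizing times for $X$ and $Y$, which converge to $T$ by path continuity. This completes the proof.
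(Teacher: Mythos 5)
Your argument is correct and is essentially the paper's own proof: localize on $\Gamma_k=\{\norm{X_0}\le k\}\in\cF_0$, apply Lemma~\ref{lm:gm} with vanishing right-hand side, let $n\to\infty$ using the pathwise continuity in $H$ (which forces $T_n=T$ eventually), and take the union over $k$. The only omission is the concluding observation that $\xi=\eta$ a.e.\ in $\Omega\times(0,T)\times D$ then follows by comparison in the equation --- needed since a strong solution is the pair $(X,\xi)$ --- and, as the paper remarks, one could even take $\Gamma=\Omega$ directly because $X_0-Y_0=0$ trivially has finite second moment.
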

\begin{proof}
  Let $(X,\xi)$, $(Y,\eta) \in \cJ_0$ be strong solutions to
  \eqref{eq:0}. For any $\Gamma \in \cF_0$ such that
  $X_{0\Gamma} \in L^2(\Omega;H)$ the previous lemma yields
  $X^{T_n}_\Gamma = Y^{T_n}_\Gamma$ for all $n \in \enne$, hence
  $X_\Gamma = Y_\Gamma$.  Writing
  \[
  \Omega = \bigcup_{k\in\enne} \Omega_k, \qquad
  \Omega_k := \bigl\{ \omega \in \Omega: \norm{X_0(\omega)} \leq k \bigr\},
  \]
  and choosing $\Gamma$ as $\Omega_k$, it follows that
  $X\ind{\Omega_k} = Y\ind{\Omega_k}$ for all $k$, hence $X=Y$.  By
  comparison, $\xi = \eta$ a.e. in $\Omega \times (0,T) \times D$.
\end{proof}
\begin{rmk}
  To prove the corollary, by inspection of the proof of
  Lemma~\ref{lm:gm} it is evident that one may directly take
  $\Gamma=\Omega$, as in this case $X_0-Y_0=0$, whose second moment is
  obviously finite. This immediately implies $X^{T_n} = Y^{T_n}$ for
  all $n \in \enne$, hence $X=Y$.
\end{rmk}

\begin{coroll}
  \label{cor:loc}
  Let $\Gamma \in \cF_0$. If $X_{0\Gamma}=Y_{0\Gamma}$, then
  $X_\Gamma=Y_\Gamma$, and $\xi_\Gamma = \eta_\Gamma$ a.e. in $\Omega
  \times (0,T) \times D$.
\end{coroll}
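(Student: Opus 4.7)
The plan is to reduce to Lemma~\ref{lm:gm} via a truncation on $\Gamma$ analogous to the one in the proof of Corollary~\ref{cor:uniq}. The hypothesis $X_{0\Gamma}=Y_{0\Gamma}$ guarantees that the difference of the initial data is zero (hence trivially in $L^2$), but not that each of $X_{0\Gamma}$ and $Y_{0\Gamma}$ is square-integrable, which is what Lemma~\ref{lm:gm} requires as stated.

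To bypass this, for $k \in \enne$ I would set $\Gamma_k := \Gamma \cap \{\norm{X_0} \leq k\} \in \cF_0$. On $\Gamma_k$ one has $\norm{X_0} \leq k$, and the hypothesis $X_{0\Gamma}=Y_{0\Gamma}$ gives $Y_0 = X_0$ on $\Gamma_k$, so $\norm{Y_0} \leq k$ there as well. Hence $X_{0\Gamma_k}$ and $Y_{0\Gamma_k}$ lie in $L^\infty(\Omega;H)\subset L^2(\Omega;H)$ and coincide. Applying Lemma~\ref{lm:gm} with $\Gamma$ replaced by $\Gamma_k$ (which redefines the stopping times $T_n$ in terms of $X_{\Gamma_k}$ and $Y_{\Gamma_k}$) yields
\[
  \E\bigl(X_{\Gamma_k}-Y_{\Gamma_k}\bigr)^{*2}_{T_n} \lesssim
  \E\norm{X_{0\Gamma_k}-Y_{0\Gamma_k}}^2 = 0 \qquad \forall n \in \enne.
\]
Since $X,Y \in \cJ_0$ have pathwise continuous trajectories in $H$ (cf.\ also Theorem~\ref{thm:cont}), the corresponding $T_n$ increase to $T$ almost surely, hence $X_{\Gamma_k}=Y_{\Gamma_k}$ on $[0,T]$ a.s. Letting $k \to \infty$ and using that $\{\norm{X_0} \leq k\}\nearrow \Omega$ modulo a null set (because $X_0$ is $H$-valued), so that $\Gamma_k \nearrow \Gamma$, gives $X_\Gamma=Y_\Gamma$.

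For the second assertion, I would multiply the equations satisfied by $(X,\xi)$ and $(Y,\eta)$ by the $\cF_0$-measurable indicator $\ind{\Gamma}$, which commutes with the Bochner and the stochastic integrals, and subtract them. The initial conditions cancel by hypothesis; the linear term gives $\ind{\Gamma}A(X-Y)=A\ind{\Gamma}(X-Y)=0$ using linearity of $A$ and the identity $X_\Gamma=Y_\Gamma$ just established; and $\ind{\Gamma}\bigl(B(\cdot,X)-B(\cdot,Y)\bigr)=0$ since $B$ is applied pathwise and $X=Y$ on $\Gamma$. What remains is $\int_0^t \ind{\Gamma}(\xi(s)-\eta(s))\,ds = 0$ in $L^1(D)$ for all $t \in [0,T]$, from which $\ind{\Gamma}(\xi-\eta)=0$ a.e.\ in $\Omega\times(0,T)\times D$ follows by differentiation in $t$. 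I do not anticipate any real difficulty: all the analytic content sits in Lemma~\ref{lm:gm}, and the only mildly delicate point is bookkeeping the $\Gamma$-dependence of the stopping times, which is harmless thanks to pathwise $H$-continuity of the solutions.
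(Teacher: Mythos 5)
Your proposal is correct and follows essentially the same route as the paper: intersect $\Gamma$ with sublevel sets of the initial data to land in the $L^2$ setting of Lemma~\ref{lm:gm}, conclude $X_{\Gamma_k}=Y_{\Gamma_k}$, let $k\to\infty$, and recover $\xi_\Gamma=\eta_\Gamma$ by comparison in the equation. The only (harmless) cosmetic difference is that the paper truncates on $\{\norm{X_0}\le k\}\cap\{\norm{Y_0}\le k\}$, whereas you truncate only on $\{\norm{X_0}\le k\}$ and use $X_{0\Gamma}=Y_{0\Gamma}$ to control $Y_0$ there.
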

\begin{proof}
  Write $\Omega = \bigcup_{k\in\enne} \Omega_k$, where
  \[
  \Omega_k := \bigl\{ \omega \in \Omega: \norm{X_0(\omega)} \leq k \bigr\}
  \cap \bigl\{ \omega \in \Omega: \norm{Y_0(\omega)} \leq k \bigr\}.
  \]
  Then $X_0\ind{\Gamma \cap \Omega_k}$, $Y_0\ind{\Gamma \cap \Omega_k}
  \in L^2(\Omega;H)$, and Lemma~\ref{lm:gm} implies that
  $X_{\Gamma\cap\Omega_k} = Y_{\Gamma\cap\Omega_k}$ for all $k \in
  \enne$, hence $X_\Gamma = Y_\Gamma$, as well as, again by
  comparison, $\xi_\Gamma = \eta_\Gamma$ a.e. in $\Omega \times (0,T)
  \times D$.
\end{proof}

Now that uniqueness is cleared, we turn to the question of existence
of strong solutions.  For this we need some preparations. For $R>0$,
let us consider the truncation operator $\sigma_R:H \to H$ defined as
\[
\sigma_R: x \longmapsto
\begin{cases}
  x, & \norm{x} \leq R,\\
  R  x / \norm{x}, & \norm{x} > R.
\end{cases}
\]
We shall then define
\begin{align*}
  B_R: \Omega \times[0,T] \times H &\longto \cL^2(U,H)\\
  (\omega,t,x) &\longmapsto B(\omega,t,\sigma_R(x)).
\end{align*}
Let us check that $B_R$ is Lipschitz-continuous for every $R>0$.  The
progressive measurability of $B_R$ follows from the one of $B$ and the
fact that $\sigma_R:H\to H$ is (Lipschitz) continuous. Moreover, since
$\sigma_R$ is $1$-Lipschitz continuous, thanks to the local Lipschitz
continuity and the linear growth of $B$, for every $\omega\in\Omega$,
$t\in[0,T]$ and $x,y\in H$ one has
\[
  \norm{B_R(\omega,t,x)-B_R(\omega,t,y)}_{\cL^2(U,H)}\leq
  N_R\norm{\sigma_R(x)-\sigma_R(y)}\leq N_R\norm{x-y}
\]
as well as
\[
  \norm{B_R(\omega, t, x)}_{\cL^2(U,H)}\leq
  N(1+\norm{\sigma_R(x)})\leq N(1+\norm{x}).
\]

Thanks to Theorems~\ref{thm:WP} and \ref{thm:cont}, as well as
Lemma~\ref{lm:gm}, the equation
\begin{equation}
  \label{eq:n}
  dX_n + AX_n\,dt + \beta(X_n)\,dt = B_n(X_n)\,dW, \qquad X_n(0)=X_0,
\end{equation}
admits a strong solution $(X_n,\xi_n)$, which belongs to $\cJ_2$ and
is unique in $\cJ_0$, for every $n \in \enne$.\footnote{Note that
  Theorem~\ref{thm:WP} only shows that $(X_n,\xi_n)$ is unique in
  $\cJ_2$, while Lemma~\ref{lm:gm} yields uniqueness in the larger
  space $\cJ_0$.}  Moreover, by the strong continuity of the paths of
$X_n$, one can define the increasing sequence of stopping times
$(\tau_n)_{n\in\enne}$ by
\[
\tau_n := \inf \bigl\{ t \in [0,T]: \norm{X_n(t)} \geq n \bigr\},
\]
as well as the stopping time
\[
\tau := \lim_{n\to\infty} \tau_n = \sup_{n\in\enne} \tau_n.
\]

As first step we show that the sequence of processes $(X_n,\xi_n)$ satisfies a
sort of consistency condition.
\begin{lemma}
  \label{lm:ind}
  One has $X_{n+1}^{\tau_n} = X_n^{\tau_n}$ for all $n \in \enne$, as
  well as $\xi_n\ind{[\![0,\tau_n]\!]} = \xi_{n+1}\ind{[\![0,\tau_n]\!]}$ in
  $L^0(\Omega \times (0,T) \times D)$.
\end{lemma}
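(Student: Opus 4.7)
The plan is to run the contractivity argument of Lemma~\ref{lm:gm} for the difference of $X_n$ and $X_{n+1}$, exploiting the key identity $B_n(\omega,s,x) = B_{n+1}(\omega,s,x)$ whenever $\norm{x} \leq n$. Both $(X_n,\xi_n)$ and $(X_{n+1},\xi_{n+1})$ belong to $\cJ_2$ and share the initial datum $X_0$, so the difference $Z := X_n - X_{n+1}$ solves an equation with vanishing initial condition and drift $A Z + (\xi_n - \xi_{n+1})$; all the integrability assumptions of Theorem~\ref{thm:Ito} are therefore trivially satisfied.

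Applying the It\^o formula of Theorem~\ref{thm:Ito} to $\norm{Z}^2$ up to $t \wedge \tau_n$, I would drop the two drift terms using coercivity of $A$ and monotonicity of $\beta$ together with the inclusions $\xi_k \in \beta(X_k)$. The essential step is to rewrite the diffusion term: on the stochastic interval $[\![0,\tau_n]\!]$ one has $\norm{X_n} \leq n$, hence $\sigma_n(X_n) = X_n = \sigma_{n+1}(X_n)$, and consequently
\[
  B_n(\cdot,X_n) - B_{n+1}(\cdot,X_{n+1}) = B_{n+1}(\cdot,X_n) - B_{n+1}(\cdot,X_{n+1})
  \qquad \text{on } [\![0,\tau_n]\!],
\]
which is bounded in $\cL^2(U,H)$-norm by $N_{n+1}\norm{Z}$ thanks to the global Lipschitz continuity of $B_{n+1}$ established just before the statement.

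Taking supremum in time and expectation, and handling the resulting martingale term via Lemma~\ref{lm:BDGY} with a small parameter that can be absorbed on the left, Gronwall's inequality yields $\E Z^{*2}_{\tau_n} = 0$, i.e.\ $X_{n+1}^{\tau_n} = X_n^{\tau_n}$. Substituting this equality into the two stopped equations and using again that $B_n(X_n) = B_{n+1}(X_n) = B_{n+1}(X_{n+1})$ on $[\![0,\tau_n]\!]$, subtraction shows that $\int_0^{\cdot \wedge \tau_n}(\xi_n - \xi_{n+1})(s)\,ds$ vanishes identically in $V' \cap L^1(D)$, from which $\xi_n \ind{[\![0,\tau_n]\!]} = \xi_{n+1}\ind{[\![0,\tau_n]\!]}$ in $L^0(\Omega \times (0,T) \times D)$ follows by differentiation. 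The only real point to verify is that the Gronwall bookkeeping survives with the $n$-dependent Lipschitz constant $N_{n+1}$, which is exactly the situation already handled in Lemma~\ref{lm:gm}; so no genuinely new difficulty arises.
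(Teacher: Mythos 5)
Your proposal is correct and follows essentially the same route as the paper: It\^o's formula for $\norm{X_{n+1}-X_n}^2$ stopped at $\tau_n$, dropping the monotone drift terms, using $B_n=B_{n+1}$ on the ball of radius $n$ so that the diffusion difference is controlled by the Lipschitz constant of $B_{n+1}$, then BDG/Gronwall, and finally comparison of the two stopped equations for the $\xi$ claim. The remark about the $n$-dependent Lipschitz constant is harmless, exactly as you say, since the assertion is for each fixed $n$.
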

\begin{proof}
  It\^o's formula yields, in view of the monotonicity of $A$ and
  $\beta$,
  \begin{align*}
  \norm[\big]{X_{n+1}-X_n}^2(t \wedge \tau_n)
  &\lesssim \int_0^{t \wedge \tau_n} (X_{n+1}-X_n)%
  \bigl( B_{n+1}(X_{n+1})-B_n(X_n) \bigr)(s)\,dW(s)\\
  &\quad 
  + \int_0^{t \wedge \tau_n}
  \norm[\big]{B_{n+1}(X_{n+1}(s))-B_n(X_n(s))}^2_{\cL^2(U,H)}\,ds.
  \end{align*}
  Note that $B_{n+1}=B_n$ on the ball of radius $n$ in $H$, hence
  $B_n(X_n)=B_{n+1}(X_n)$ on $[\![0,\tau_n]\!]$. Therefore, since
  $B_{n+1}$ is Lipschitz continuous,
  \begin{align*}
  \E \bigl( X^{\tau_n}_{n+1} - X^{\tau_n}_n \bigr)^{*2}_t
  &\lesssim
  \E\bigl(
  ((X_{n+1}-X_n) (B_{n+1}(X_{n+1}) - B_{n+1}(X_n))) \cdot W \bigr)^*_{t \wedge \tau_n}\\
  &\quad + \int_0^t \E\bigl( X^{\tau_n}_{n+1} - X^{\tau_n}_n \bigr)^{*2}_s\,ds,
  \end{align*}
  where the first term on the right-hand side can be estimated, thanks
  to the BDG inequality and the ideal property of Hilbert-Schmidt
  operators, by
  \begin{align*}
  &\E\biggl( \int_0^{t \wedge \tau_n}
  \norm[\big]{X_{n+1}-X_n}^2(s)
  \norm[\big]{B_{n+1}(X_{n+1}(s))-B_{n+1}(X_n(s))}^2_{\cL^2(U,H)}\,ds
  \biggr)^{1/2}\\
  &\hspace{3em} \lesssim_n \E\bigl(%
   X_{n+1}^{\tau_n} - X_n^{\tau_n} \bigr)^*_t \biggl( \int_0^t
  \norm[\big]{X_{n+1}^{\tau_n}-X_n^{\tau_n}}^2(s) \biggr)^{1/2}\\
  &\hspace{3em} \lesssim_n
  \varepsilon \E \bigl( X_{n+1}^{\tau_n} - X_n^{\tau_n} \bigr)^{*2}_t
  + \frac{1}{\varepsilon} \int_0^t%
  \E \bigl( X_{n+1}^{\tau_n} - X_n^{\tau_n} \bigr)^{*2}_s\,ds.
  \end{align*}
  Choosing $\varepsilon$ small enough, Gronwall's inequality implies
  \[
  \E \bigl( X_{n+1}^{\tau_n} - X_n^{\tau_n} \bigr)^{*2}_t = 0
  \]
  for all $t \leq T$, hence $X_{n+1}^{\tau_n} =
  X_n^{\tau_n}$. The first claim is thus proved.

  In order to prove the second claim, note that it holds
  \begin{align*}
    X_{n+1}^{\tau_n}(t) + \int_0^{t \wedge \tau_n} AX_{n+1}\,ds
    + \int_0^{t \wedge \tau_n} \xi_{n+1}\,ds
    &= X_0 + \int_0^{t \wedge \tau_n} B_{n+1}(X_{n+1})\,dW,\\
    X_{n}^{\tau_n}(t) + \int_0^{t \wedge \tau_n} AX_{n}\,ds
    + \int_0^{t \wedge \tau_n} \xi_{n}\,ds
    &= X_0 + \int_0^{t \wedge \tau_n} B_{n}(X_{n})\,dW,
  \end{align*}
  where $B_n(X_n)$ on the right-hand side of the second identity can
  be replaced by $B_{n+1}(X_{n+1})$ because the paths of
  $X^{\tau_n}_{n+1}$ remain within a ball of radius $n$ in $H$ and
  $X_{n+1}^{\tau_n}=X_n^{\tau_n}$. This identity also yields, by
  comparison,
  \[
    \int_0^t \xi_{n+1}\ind{[\![0,\tau_n]\!]}\,ds
    = \int_0^{t \wedge \tau_n} \xi_{n+1}\,ds
    = \int_0^{t \wedge \tau_n} \xi_{n}\,ds
    = \int_0^t \xi_{n}\ind{[\![0,\tau_n]\!]}\,ds,
  \]
  which implies the second claim.\footnote{The argument in fact proves
    the following slightly stronger statement: setting
    $\Xi_n:=\int_0^\cdot \xi_n\,ds$, the processes
    $\Xi_{n+1}^{\tau_n}$ and $\Xi_n^{\tau_n}$ are indistinguishable
    for all $n$.}
\end{proof}
The lemma implies that one can define processes $X$ and $\xi$ on
$[\![0,\tau]\!]$ by the prescriptions $X := X_n$ and $\xi := \xi_n$ on
$[\![0,\tau_n]\!]$ for all $n \in \enne$, or equivalently (but perhaps
less tellingly), as $X=\lim_{n\to\infty} X_n$ and
$\xi=\lim_{n\to\infty} \xi_n$.

\medskip

We are now going to show that the linear growth assumption on $B$
implies that $\tau=T$. We shall first establish a priori estimates for
the solution to equation \eqref{eq:n}.
\begin{lemma}
  \label{lm:ap}
  There exists a constant $N>0$, independent of $n$, such that
  \[
    \E\norm[\big]{X_n}^2_{C([0,T];H)}
    + \E\norm[\big]{X_n}^2_{L^2(0,T;V)} 
    + \E\norm[\big]{\xi_nX_n}_{L^1((0,T) \times D)}
    < N \bigl( 1 + \E\norm{X_0}^2 \bigr).
  \]
\end{lemma}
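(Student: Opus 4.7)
The plan is to apply the generalized It\^o formula of Theorem~\ref{thm:Ito} to $\tfrac12\norm{X_n}^2$ starting from equation~\eqref{eq:n}, exploit coercivity and monotonicity on the left-hand side, and use the linear growth of $B$ on the right-hand side. The crucial observation is that, since $\sigma_R$ is a contraction in $H$ fixing $0$, the truncated coefficient $B_n(\cdot)=B(\sigma_n(\cdot))$ inherits the \emph{same} linear growth constant $N$ of $B$, independently of $n$. This is what allows the resulting estimate to be uniform in $n$.

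Concretely, I would first apply Theorem~\ref{thm:Ito} to obtain, for every $t\in[0,T]$ and $\P$-a.s.,
\[
  \tfrac12\norm{X_n(t)}^2 + \int_0^t \ip{AX_n(s)}{X_n(s)}\,ds
  + \int_0^t\!\!\int_D \xi_n X_n\,dx\,ds
  = \tfrac12\norm{X_0}^2 + \tfrac12\int_0^t\norm{B_n(X_n(s))}^2_{\cL^2(U,H)}\,ds
  + \int_0^t X_n(s)B_n(X_n(s))\,dW(s).
\]
By the coercivity of $A$ one has $\ip{AX_n}{X_n}\geq C\norm{X_n}_V^2$, and by the monotonicity of $\beta$ together with $0\in\beta(0)$ (which implies $\xi_n X_n\geq 0$ a.e.), both integrals on the left-hand side are nonnegative; moreover $\xi_n X_n = j(X_n)+j^*(\xi_n)$ by the subdifferential equality, so the $L^1((0,T)\times D)$ norm is controlled by these integrals. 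The deterministic term on the right is bounded by $N^2\int_0^t(1+\norm{X_n(s)}^2)\,ds$ using the $n$-independent linear growth.

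To handle the stochastic integral, I would take the supremum over $[0,t]$ before taking expectation and estimate it via Lemma~\ref{lm:BDGY} applied with $F=X_n$, $G=B_n(X_n)$, $S=T$, and $p=1$:
\[
  \E\Bigl( (X_n B_n(X_n))\cdot W \Bigr)^*_t
  \leq \varepsilon\,\E(X_{n,t}^*)^2
    + \frac{1}{\varepsilon}\,\E\int_0^t\norm{B_n(X_n(s))}^2_{\cL^2(U,H)}\,ds
  \leq \varepsilon\,\E(X_{n,t}^*)^2
    + \frac{2N^2}{\varepsilon}\Bigl( T + \int_0^t \E(X_{n,s}^*)^2\,ds \Bigr).
\]
Choosing $\varepsilon$ small enough to absorb the first term into the left, combining all the bounds, and letting $\varphi_n(t):=\E(X_{n,t}^*)^2$, one obtains
\[
  \varphi_n(t) + C\,\E\int_0^t\norm{X_n}_V^2\,ds
  + \E\int_0^t\!\!\int_D \xi_n X_n\,dx\,ds
  \lesssim 1 + \E\norm{X_0}^2 + \int_0^t \varphi_n(s)\,ds.
\]

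Since solutions in $\cJ_2$ have a priori finite $\varphi_n$, Gronwall's lemma applied to $\varphi_n$ directly yields $\varphi_n(T)\lesssim 1+\E\norm{X_0}^2$ with an implicit constant depending only on $T$, $C$, and $N$, and plugging this bound back into the displayed inequality controls the remaining two terms as well. If one wishes to avoid the a priori finiteness of $\varphi_n$, the standard remedy is to localize by the stopping times $\sigma_k:=\inf\{t:\norm{X_n(t)}\geq k\}\wedge T$, carry out the same computation up to $\sigma_k$ (where all quantities are bounded), obtain a bound independent of $k$, and then pass to the limit $k\to\infty$ by monotone convergence using the continuity of the paths of $X_n$ in $H$ established in Theorem~\ref{thm:cont}. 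There is no substantive obstacle; the only point requiring attention is the $n$-independence of the linear-growth constant of $B_n$, which is precisely what makes the estimate uniform in $n$.
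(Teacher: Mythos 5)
Your proposal is correct and follows essentially the same route as the paper: It\^o's formula from Theorem~\ref{thm:Ito}, coercivity of $A$ and nonnegativity of $\xi_nX_n$, the $n$-independent linear growth of $B_n$ inherited from the contractivity of $\sigma_n$, Lemma~\ref{lm:BDGY} for the martingale term, and Gronwall applied to $t\mapsto\E(X_{n,t}^*)^2$. Your explicit remarks on the a priori finiteness of this quantity (or the localization alternative) only make more precise a point the paper leaves implicit.
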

\begin{proof}
  The It\^o formula of Theorem~\ref{thm:Ito} yields
  \begin{align*}
    &\norm{X_n(t)}^2
      + 2\int_0^{t} \ip{AX_n(s)}{X_n(s)}\,ds
      + 2\int_0^{t} \!\!\int_D\xi_n(s)X_n(s)\,dx\,ds\\
    &\hspace{3em} = \norm{X_0}^2
      + \int_0^{t} \norm[\big]{B_n(s,X_n(s))}^2_{\cL^2(U,H)}\,ds
      + 2\int_0^{t} X_n(s)B_n(s,X_n(s))\,dW(s),
  \end{align*}
  where, recalling that $B_n=B(\cdot,\cdot,\sigma_n(\cdot))$ and
  $\sigma_n$ is a contraction in $H$, and that $B$ grows at most
  linearly,
  \[
  \int_0^t \norm[\big]{B_n(s,X_n(s))}^2_{\cL^2(U,H)}
  \lesssim T + \int_0^T \norm{X_n(s)}^2\,ds.
  \]
  Denoting the stochastic integral on the right-hand side by $M_n$,
  taking supremum in time and expectation we get, by the coercivity of
  $A$,
  \begin{align*}
    &\E\norm[\big]{X_n}^2_{C([0,T];H)}
    + \E\norm[\big]{X_n}^2_{L^2(0,T;V)} 
    + \E\norm[\big]{\xi_nX_n}_{L^1((0,T) \times D)}\\
    &\hspace{3em} \lesssim 1 + \E\norm{X_0}^2
      + \E\int_0^T \norm{X_n(s)}^2\,ds
      + \E M_T^{*2},
  \end{align*}
  where the implicit constant depends on $T$. By Lemma~\ref{lm:BDGY}
  we have, for any $\varepsilon>0$,
  \[
    \E M_T^{*2} \lesssim \varepsilon \E\norm[\big]{X_n}^2_{C([0,T];H)}
    + N(\varepsilon) \E \int_0^T \norm{X_n(s)}^2\,ds,
  \]
  therefore, choosing $\varepsilon$ sufficiently small,
  \begin{align*}
    &\E\norm[\big]{X_n}^2_{C([0,T];H)}
    + \E\norm[\big]{X_n}^2_{L^2(0,T;V)} 
    + \E\norm[\big]{\xi_nX_n}_{L^1((0,T) \times D)}\\
    &\hspace{3em} \lesssim 1 + \E\norm{X_0}^2
      + \E\int_0^T \norm{X_n(s)}^2\,ds.
  \end{align*}
  Since this inequality holds also with $T$ replaced by any
  $t \in ]0,T]$, we also have
  \[
    \E\norm[\big]{X_n}^2_{C([0,t];H)} \lesssim
    1 + \E\norm{X_0}^2 + \int_0^T \E\norm[\big]{X_n}^2_{C([0,s];H)}\,ds,
  \]
  hence, by Gronwall's inequality,
  \[
  \E\norm[\big]{X_n}^2_{C([0,T];H)} \lesssim 1 + \E\norm{X_0}^2,
  \]
  with implicit constant depending on $T$. Since $C([0,T];H)$ is
  continuously embedded in $L^2(0,T;H)$, one easily deduces
  \[
    \E\norm[\big]{X_n}^2_{C([0,T];H)} +
    \E\norm[\big]{X_n}^2_{L^2(0,T;V)} +
    \E\norm[\big]{\xi_nX_n}_{L^1((0,T) \times D)} \lesssim 1 +
    \E\norm{X_0}^2.
  \qedhere
  \]
\end{proof}

\begin{lemma}
  \label{lm:tau}
  One has
  \[
    \P\Bigl( \limsup_{n \to \infty} \{\tau_n \leq T\} \Bigr) = 0.
  \]
  In particular, $\tau=T$.
\end{lemma}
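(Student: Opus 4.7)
The plan is to combine the uniform a priori bound of Lemma~\ref{lm:ap} with Chebyshev's inequality and the Borel--Cantelli lemma. First I would observe that, by the pathwise strong continuity of $X_n$ in $H$ (Theorem~\ref{thm:cont}) together with the definition of $\tau_n$, on the event $\{\tau_n \leq T\}$ the trajectory of $X_n$ attains $H$-norm equal to $n$ at the time $\tau_n$, so that
\[
\{\tau_n \leq T\} \subseteq \bigl\{ \norm{X_n}_{C([0,T];H)} \geq n \bigr\}.
\]

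Next, Chebyshev's inequality and Lemma~\ref{lm:ap} would then give
\[
\P(\tau_n \leq T) \leq \frac{1}{n^2}\,\E\norm{X_n}^2_{C([0,T];H)} \leq \frac{N}{n^2}\bigl( 1 + \E\norm{X_0}^2 \bigr),
\]
with a constant $N$ independent of $n$. This right-hand side is summable, hence the Borel--Cantelli lemma yields $\P(\limsup_n \{\tau_n \leq T\}) = 0$. On the complement of this null event, for $\P$-almost every $\omega$ there exists $n_0(\omega) \in \enne$ such that $\tau_n(\omega) > T$ for all $n \geq n_0(\omega)$; combined with monotonicity of $(\tau_n)$ and $\tau = \sup_n \tau_n$, this gives $\tau = T$ almost surely.

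I do not anticipate any substantive obstacle in executing this plan. The one point that deserves care is verifying that the constant in Lemma~\ref{lm:ap} is truly independent of $n$: this relies on the fact that the truncation $\sigma_n$ is a contraction on $H$, so that $B_n = B(\cdot,\cdot,\sigma_n(\cdot))$ inherits the same linear growth constant $N$ as $B$, and the a priori computation in Lemma~\ref{lm:ap} uses only this growth bound and not the (much worse, $n$-dependent) Lipschitz constant of $B_n$.
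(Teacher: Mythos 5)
Your proposal is correct and follows essentially the same route as the paper: Chebyshev's (Markov's) inequality applied to the uniform bound of Lemma~\ref{lm:ap}, the identification of $\{\tau_n \leq T\}$ with $\{\norm{X_n}_{C([0,T];H)} \geq n\}$, and the Borel--Cantelli lemma, after which the sequence $(\tau_n)$ is eventually equal to $T$ almost surely. Your closing remark about the constant in Lemma~\ref{lm:ap} depending only on the ($n$-independent) linear growth bound of $B_n$, and not on its Lipschitz constant, is exactly the right point to check and is consistent with the paper's argument.
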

\begin{proof}
  By Markov's inequality and the previous lemma,
  \[
    \P\bigl( \norm{X_n}_{C([0,T];H)} \geq n \bigr)
    \leq \frac{1}{n^2} \E\norm{X_n}^2_{C([0,T]; H)}
    \lesssim \frac{1}{n^2} \bigl( 1 + \E\norm{X_0}^2).
  \]
  Since the event $\{ \norm{X_n}_{C([0,T];H)} \geq n \}$ coincides
  with $\{\tau_n \leq T\}$, one has
  \[
  \sum_{n=1}^\infty \P\bigl( \tau_n \leq T \bigr) < \infty,
  \]
  thus also, by the Borel-Cantelli lemma,
  \[
  \P\left(\bigcap_{n\in\enne}\bigcup_{k\geq n}\{\tau_k\leq T\}\right) = 0.
  \]
  In other words, the sequence $(\tau_n)$ is ultimately constant: for
  each $\omega$ in a subset of $\Omega$ of $\P$-measure one, there
  exists $m=m(\omega)$ such that $\tau_n(\omega)=T$ for all $n>m$. In
  particular, $\tau=T$ $\P$-almost surely.
\end{proof}

This lemma implies that the processes $X$ and $\xi$ defined
immediately after the proof of Lemma~\ref{lm:ind} are indeed defined
on the whole interval $[0,T]$. 

We can now prove the first existence result.
\begin{thm} 
  \label{thm:int}
  Assume that $X_0 \in L^2(\Omega,\cF_0,\P;H)$. Then equation
  \eqref{eq:0} admits a unique strong solution, which belongs to
  $\mathscr{J}_2$.
\end{thm}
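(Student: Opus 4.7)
\emph{Proof proposal.} Uniqueness is the content of Corollary~\ref{cor:uniq}. For existence, I would assemble the solution from the sequence $(X_n,\xi_n)$ constructed before Lemma~\ref{lm:ind}. By that lemma, these pairs are consistent on $[\![0,\tau_n]\!]$, and by Lemma~\ref{lm:tau} one has $\tau := \sup_n \tau_n = T$ $\P$-almost surely. Thus setting $X := X_n$, $\xi := \xi_n$ on $[\![0,\tau_n]\!]$ unambiguously defines, after discarding a $\P$-null set, an adapted $V$-valued process $X$ with $H$-continuous paths and a predictable $L^1(D)$-valued process $\xi$ on all of $[0,T]$.

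To show that $(X,\xi)$ solves \eqref{eq:0} in the sense of Definition~\ref{def:sol}, observe that on $[\![0,\tau_n]\!]$ one has $\norm{X_n} \leq n$, so $\sigma_n$ acts as the identity and $B_n(\cdot,X_n) = B(\cdot,X_n) = B(\cdot,X)$ there. Stopping equation~\eqref{eq:n} at $\tau_n$ and substituting via the consistencies $X_n^{\tau_n} = X^{\tau_n}$ and $\xi_n\ind{[\![0,\tau_n]\!]} = \xi\ind{[\![0,\tau_n]\!]}$ yields
\[
  X(t \wedge \tau_n) + \int_0^{t \wedge \tau_n} AX(s)\,ds + \int_0^{t \wedge \tau_n} \xi(s)\,ds
  = X_0 + \int_0^{t \wedge \tau_n} B(s,X(s))\,dW(s)
\]
in $V' \cap L^1(D)$, for every $t \in [0,T]$. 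Letting $n \to \infty$ and using $\tau_n \uparrow T$ $\P$-a.s.\ together with the pathwise continuity of $X$ and of the stochastic integral recovers the integral identity required in Definition~\ref{def:sol}; the inclusion $\xi \in \beta(X)$ a.e.\ in $\Omega \times (0,T) \times D$ is inherited from the analogous property of each $\xi_n$ on $[\![0,\tau_n]\!]$, since these stochastic intervals exhaust $\Omega \times [0,T]$.

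It remains to verify $(X,\xi) \in \cJ_2$ via Fatou-type arguments using the uniform bounds of Lemma~\ref{lm:ap}. Since on a set of full measure $\tau_n = T$ for all $n$ large enough, one has $\norm{X}_{C([0,T];H)}^2 \leq \liminf_n \norm{X_n}_{C([0,T];H)}^2$ and similarly in $L^2(0,T;V)$, so Fatou's lemma combined with Lemma~\ref{lm:ap} gives
\[
  \E\norm{X}^2_{C([0,T];H)} + \E\norm{X}^2_{L^2(0,T;V)} \lesssim 1 + \E\norm{X_0}^2 < \infty.
\]
By the Fenchel--Young identity $X\xi = j(X) + j^*(\xi)$ a.e.\ (since $\xi \in \partial j(X)$), and by monotonicity of $\beta$ together with $0 \in \beta(0)$ this quantity is nonnegative. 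Monotone convergence applied to $(X_n\xi_n)\ind{[\![0,\tau_n]\!]} = (X\xi)\ind{[\![0,\tau_n]\!]} \uparrow X\xi$, together with Lemma~\ref{lm:ap}, then yields $j(X)+j^*(\xi) \in L^1(\Omega;L^1((0,T) \times D))$, as required for membership in $\cJ_2$. The only conceptually delicate step is the gluing argument and the identification of $B$ in place of $B_n$ in the limit equation; the integrability claims come essentially for free from the uniform a priori bounds and the nonnegativity of $X\xi$.
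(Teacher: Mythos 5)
Your proposal is correct and follows essentially the same route as the paper: gluing the $(X_n,\xi_n)$ along the stopping times $\tau_n$ via Lemmas~\ref{lm:ind} and \ref{lm:tau}, identifying $B_n$ with $B$ on $[\![0,\tau_n]\!]$, passing to the limit in the stopped equation, and extracting the $\cJ_2$ bounds from Lemma~\ref{lm:ap}. The only (harmless) variation is that you obtain $j(X)+j^*(\xi)\in L^1(\Omega;L^1((0,T)\times D))$ by monotone convergence applied to the nonnegative increasing sequence $(X\xi)\ind{[\![0,\tau_n]\!]}$, where the paper instead invokes Fatou's lemma together with the lower semicontinuity of convex integrals applied to $j(X_n)+j^*(\xi_n)$; both are valid.
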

\begin{proof}
  Uniqueness of strong solutions is proved, in more generality, by
  Corollary~\ref{cor:uniq}. Let us prove existence.  By stopping at
  $\tau_n$, one has
  \[
    X_n^{\tau_n}(t) + \int_0^{t \wedge \tau_n} AX_n(s)\,ds
    + \int_0^{t \wedge \tau_n} \xi_n(s)\,ds
    = X_0 + \int_0^{t \wedge \tau_n} B_n(X_n(s))\,ds,
  \]
  where, by definition of $X$, $X_n^{\tau_n}=X^{\tau_n}$, as well as,
  by definition of $B_n$,
  \[
    \int_0^{t \wedge \tau_n} B_n(X_n(s))\,ds
    = \int_0^{t \wedge \tau_n} B(X(s))\,ds.
  \]
  Similarly, by definition of $\xi$ it follows that
  \[
    \int_0^{t \wedge \tau_n} \xi_n(s)\,ds
    = \int_0^{t \wedge \tau_n} \xi(s)\,ds,
  \]
  hence that
  \[
    X^{\tau_n}(t) + \int_0^{t \wedge \tau_n} AX(s)\,ds
    + \int_0^{t \wedge \tau_n} \xi(s)\,ds
    = X_0 + \int_0^{t \wedge \tau_n} B(X(s))\,ds.
  \]
  Since this identity holds for all $n \in \enne$ and $\tau_n \to T$
  as $n \to \infty$, we infer that
  \[
    X(t) + \int_0^t AX(s)\,ds
    + \int_0^t \xi(s)\,ds
    = X_0 + \int_0^t B(X(s))\,ds
  \]  
  for all $t \in [0,T]$ $\P$-a.s.. Moreover, for every $n \in \enne$,
  $\xi_n \in \beta(X_n)$ a.e. in $\Omega \times (0,T) \times D$, hence
  $\xi_n\ind{[\![0,\tau_n]\!]} \in \beta(X_n)\ind{[\![0,\tau_n]\!]}$,
  thus also
  $\xi\ind{[\![0,\tau_n]\!]} \in \beta(X)\ind{[\![0,\tau_n]\!]}$
  a.e. in $\Omega \times (0,T) \times D$. Recalling that
  $\tau_n \to T$ as $n \to \infty$, this in turn implies
  $\xi \in \beta(X)$ a.e. in $\Omega \times (0,T) \times D$.

  Moreover, since $(X,\xi)$ is the almost sure limit of $(X_n,\xi_n)$,
  we immediately infer that $X$ and $\xi$ are predictable $H$-valued
  and $L^1(D)$-valued processes, respectively. The a priori estimates
  of Lemma~\ref{lm:ap} and Fatou's lemma then yield
  \[
    X \in L^2(\Omega;C([0,T];H)) \cap L^2(\Omega;L^2(0,T;V)),
    \qquad \xi \in L^1(\Omega \times (0,T) \times D).
  \]
  Similarly, $\xi_n \in \beta(X_n)$ implies
  $X_n\xi_n = j(X_n)+j^*(\xi_n)$, hence
  \[
    \E\int_0^T\!\!\int_D \bigl( j(X_n)+j^*(\xi_n) \bigr) \lesssim 1 +
    \E\norm{X_0}^2
  \]
  for all $n \in \enne$, and again by Fatou's lemma, as well as by the
  lower-semicontinuity of convex integrals, one obtains
  \[
    j(X) + j^*(\xi) \in L^1(\Omega;L^1(0,T;L^1(D))).
  \]
  We have thus proved that $(X,\xi) \in \cJ_2$, so the proof is
  completed.
\end{proof}

The second existence result, which allows $X_0$ to be merely
$\cF_0$-measurable, follows by a further ``gluing'' procedure.
\begin{thm}
  Assume that $X_0 \in L^0(\Omega,\cF_0,\P;H)$. Then equation
  \eqref{eq:0} admits a unique strong solution.
\end{thm}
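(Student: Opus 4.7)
The plan is to reduce to the square-integrable case (Theorem~\ref{thm:int}) by cutting the initial datum along a partition of $\Omega$ determined by the size of $\norm{X_0}$, and then gluing the resulting solutions. First, for each $k \in \enne$ I set
\[
  \Omega_k := \bigl\{ \omega \in \Omega : \norm{X_0(\omega)} \leq k \bigr\} \in \cF_0,
\]
so that $X_0 \ind{\Omega_k} \in L^2(\Omega,\cF_0,\P;H)$ and $\Omega_k \uparrow \Omega$. Theorem~\ref{thm:int} then produces, for every $k$, a strong solution $(X^k,\xi^k) \in \cJ_2$ of \eqref{eq:0} with initial datum $X_0 \ind{\Omega_k}$.

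The second step is a consistency check. For $j \leq k$ the initial data $X_0 \ind{\Omega_j}$ and $X_0 \ind{\Omega_k}$ agree on $\Omega_j$, so Corollary~\ref{cor:loc} applied with $\Gamma = \Omega_j$ gives
\[
  X^j \ind{\Omega_j} = X^k \ind{\Omega_j}, \qquad
  \xi^j \ind{\Omega_j} = \xi^k \ind{\Omega_j}
\]
(the latter a.e.\ in $\Omega \times (0,T) \times D$). This allows me to define processes $(X,\xi)$ unambiguously by $X \ind{\Omega_k} := X^k \ind{\Omega_k}$, $\xi \ind{\Omega_k} := \xi^k \ind{\Omega_k}$ for every $k$; since $\bigcup_k \Omega_k = \Omega$ up to a $\P$-null set, this defines $(X,\xi)$ on all of $\Omega$.

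The third step is to verify that $(X,\xi)$ is a strong solution in the sense of Definition~\ref{def:sol}. Adaptedness of $X$, predictability of $\xi$, the pathwise regularity $X \in C([0,T];H) \cap L^2(0,T;V)$ and $\xi \in L^1((0,T) \times D)$, and the inclusion $\xi \in \beta(X)$ a.e., all transfer from $(X^k,\xi^k) \in \cJ_2$ to the glued process on each $\Omega_k$, and therefore globally, because $\Omega_k \uparrow \Omega$ a.s. To obtain the integral identity I multiply the equation satisfied by $(X^k,\xi^k)$ by $\ind{\Omega_k}$; using $\ind{\Omega_k} \in \cF_0$ and the identity $\ind{\Omega_k}(K \cdot W) = (\ind{\Omega_k} K) \cdot W$ already exploited in the proof of Lemma~\ref{lm:gm}, together with $B(\cdot,X) \ind{\Omega_k} = B(\cdot,X^k) \ind{\Omega_k}$, I get
\[
  X(t)\ind{\Omega_k} + \int_0^t AX(s)\ind{\Omega_k}\,ds + \int_0^t \xi(s)\ind{\Omega_k}\,ds
  = X_0 \ind{\Omega_k} + \int_0^t \ind{\Omega_k} B(s,X(s))\,dW(s)
\]
in $V' \cap L^1(D)$ for every $t \in [0,T]$, $\P$-a.s. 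Letting $k \to \infty$ and using $\Omega_k \uparrow \Omega$ removes the indicator. Hence $(X,\xi) \in \cJ_0$ is a strong solution, and uniqueness is already granted by Corollary~\ref{cor:uniq}.

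The only genuine subtlety is the consistency step; once Corollary~\ref{cor:loc} is invoked, the rest is bookkeeping, since the indicator $\ind{\Omega_k}$ is $\cF_0$-measurable and therefore commutes cleanly with the stochastic integral and with the Bochner integrals of $AX$ and $\xi$. Note that no integrability of $X$ or $\xi$ in $\omega$ is claimed, which is consistent with the codomain being $\cJ_0$ rather than $\cJ_p$ for $p>0$.
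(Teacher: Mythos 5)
Your proposal follows essentially the same route as the paper: the same exhaustion $\Omega_k=\{\norm{X_0}\le k\}\uparrow\Omega$, the same appeal to Theorem~\ref{thm:int} for each truncated datum, the same use of Corollary~\ref{cor:loc} for consistency, and the same gluing. The only point you gloss over is that membership in $\cJ_0$ (needed to invoke the uniqueness of Corollary~\ref{cor:uniq}) also requires $j(X)+j^*(\xi)\in L^0(\Omega;L^1((0,T)\times D))$, which the paper checks by the identities $j(X_k)\ind{\Omega_k}=j(X)\ind{\Omega_k}$ and $j^*(\xi_k)\ind{\Omega_k}=j^*(\xi)\ind{\Omega_k}$ --- a verification that fits squarely into your ``bookkeeping'' step.
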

\begin{proof}
  Uniqueness of strong solutions has already been proved in
  Corollary~\ref{cor:uniq}. It is hence enough to prove existence. Let
  us define the sequence $(\Gamma_n)_{n\in\enne}$ of elements of
  $\cF_0$ as
  \[
  \Gamma_n := \bigl\{ \omega \in \Omega: \, \norm{X_0} \leq n \bigr\}.
  \]
  It is evident that $(\Gamma_n)$ is a sequence increasing to
  $\Omega$, and that
  $X_{0\Gamma_n} = X_0\ind{\Gamma_n} \in L^2(\Omega;H)$. Therefore, by
  the previous theorem, for each $n \in \enne$ there exists a unique
  strong solution $(X_n,\xi_n)$ to \eqref{eq:0} with initial condition
  $X_{0\Gamma_n}$. By the local property of solutions established in
  Corollary~\ref{cor:loc}, we have that $X_{n+1}\ind{\Gamma_n}$ and
  $X_n\ind{\Gamma_n}$ are indistinguishable, and
  $\xi_{n+1}\ind{\Gamma_n}=\xi_n\ind{\Gamma_n}$ a.e. in
  $\Omega \times (0,T) \times D$. Since $(\Gamma_n)$ is increasing, it
  makes sense to define the processes $X$ and $\xi$ by
  \[
    X\ind{\Gamma_n} = X_n\ind{\Gamma_n}, \quad
    \xi\ind{\Gamma_n} = \xi_n\ind{\Gamma_n}
  \]
  for all $n \in \enne$. This amounts to saying that $X$ and $\xi$
  are the $\P$-a.s. limits of $X_n$ and $\xi_n$, respectively,
  which
  immediately implies that $X$ and $\xi$ are predictable processes
  with values in $H$ and $L^1(D)$, respectively. Moreover, by
  construction, we also have
  \[
    X \in L^0(\Omega;C([0,T];H) \cap L^2(0,T;V)), \qquad
    \xi \in L^0(\Omega;L^1(0,T;L^1(D)))
  \]
  In fact, writing $E:=C([0,T];H) \cap L^2(0,T;V)$ for compactness of
  notation, by the previous theorem we have $X_n \in L^2(\Omega;E)$
  and $\xi \in L^1(\Omega \times (0,T) \times D)$, and for any
  arbitrary but fixed $\omega$ in a subset of $\Omega$ of probability
  one, there exists $n=n(\omega)$ such that
  $(X(\omega),\xi(\omega)) =(X_n(\omega),\xi_n(\omega)) \in E \times
  L^1((0,T) \times D)$. Furthermore, since $\xi_n \in \beta(X_n)$
  a.e. for all $n \in \enne$, it is easy to see that
  \[
    \xi\ind{\Gamma_n} = \xi_n\ind{\Gamma_n} \in \beta(X_n)\ind{\Gamma_n}
    = \beta(X_n\ind{\Gamma_n}) \ind{\Gamma_n}
    = \beta(X) \ind{\Gamma_n}
  \]
  for all $n \in \enne$, so that $\xi \in \beta(X)$ a.e. because
  $\Gamma_n \uparrow \Omega$. Similarly,
  \[
    j(X_n) \ind{\Gamma_n} = j(\ind{\Gamma_n}X_n) \ind{\Gamma_n}
    = j(X) \ind{\Gamma_n}
  \]
  as well as, by the same reasoning,
  $j^*(\xi_n) \ind{\Gamma_n} = j^*(\xi) \ind{\Gamma_n}$. Since, by the
  previous theorem,
  $j(X_n) + j^*(\xi_n) \in L^1(\Omega;L^1((0,T) \times D)$ for all
  $n \in \enne$, it follows that
  \[
    \bigl( j(X) + j^*(\xi) \bigr) \ind{\Gamma_n} \in
    L^1(\Omega;L^1((0,T) \times D) \qquad \forall n \in \enne,
  \]
  hence $j(X) + j^*(\xi) \in L^0(\Omega;L^1((0,T) \times D)$.
\end{proof}


\section{Moment estimates and dependence on the initial datum}
\label{sec:mom}
We are now going to show that the integrability of the solution is
determined by the integrability of the initial condition.
\begin{thm}
  Let $p \geq 0$. If $X_0 \in L^p(\Omega,\cF_0,\P;H)$, then the unique
  strong solution to equation \eqref{eq:0} belongs to
  $\mathscr{J}_p$.
\end{thm}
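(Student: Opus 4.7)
Plan. The existence of the solution $(X,\xi)$ in $\cJ_0$ has already been established in Section~\ref{sec:X0}, so the content of the theorem is an a priori $L^p$-integrability improvement. The strategy is to derive a uniform estimate
\[
  \E\norm{X_n}^p_{C([0,T];H)} + \E\norm{X_n}^p_{L^2(0,T;V)} + \E\norm{j(X_n)+j^*(\xi_n)}^{p/2}_{L^1((0,T)\times D)} \lesssim 1 + \E\norm{X_0}^p
\]
for the approximating processes $(X_n,\xi_n)$ solving \eqref{eq:n}, and then to pass to the limit via Fatou's lemma (together with the lower-semicontinuity of convex integrals for the $j+j^*$ term), exploiting the almost-sure convergence $(X_n,\xi_n)\to(X,\xi)$ from the previous section. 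The case $p=0$ is exactly the existence statement, so we may assume $p>0$; for $p\in(0,2)$, we first reduce to initial data in $L^2$ by the truncation $X_0^{(k)} := X_0\ind{\{\norm{X_0}\leq k\}}$ and the local property of Corollary~\ref{cor:loc}.

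The a priori estimate begins from the generalized It\^o formula (Theorem~\ref{thm:Ito}) applied to $\norm{X_n}^2$: by coercivity of $A$ and the nonnegativity of $X_n\xi_n = j(X_n)+j^*(\xi_n)$,
\[
  \norm{X_n(t)}^2 + 2C\int_0^t\norm{X_n}_V^2\,ds + 2\int_0^t\!\!\int_D(j(X_n)+j^*(\xi_n))
  \leq \norm{X_0}^2 + \int_0^t\norm{B_n(X_n)}^2_{\cL^2(U,H)}\,ds + 2M_n(t),
\]
where $M_n := (X_nB_n(X_n))\cdot W$ and $\norm{B_n(X_n)}^2_{\cL^2(U,H)} \lesssim 1+\norm{X_n}^2$ uniformly in $n$, the linear growth of $B$ being preserved by the truncation $\sigma_n$.

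Taking supremum in time, raising to the power $p/2$, and taking expectation, the martingale term is controlled via Lemma~\ref{lm:BDGY} with exponent $p/2$ by
\[
  \E(M_n^*(t))^{p/2} \lesssim \varepsilon^{p/2}\,\E\sup_{s\leq t}\norm{X_n(s)}^p + \varepsilon^{-p/2}\,\E\Bigl(\int_0^t\norm{B_n(X_n)}^2_{\cL^2(U,H)}\,ds\Bigr)^{p/2}.
\]
For $p\geq 2$, Jensen's inequality converts the integral of $\norm{B_n(X_n)}^2$ into $\int_0^t\E\sup_{r\leq s}\norm{X_n(r)}^p\,ds$; choosing $\varepsilon$ small enough to absorb the first term and invoking Gronwall's inequality closes the estimate. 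For $p\in(0,2)$, two issues complicate the argument: the absorption step requires a priori finiteness of $\E\sup\norm{X_n}^p$, and the elementary bound $\int_0^t\norm{X_n}^2\,ds\leq t\sup_{s\leq t}\norm{X_n}^2$ produces a coefficient $t^{p/2}$ on the right-hand side that cannot be made strictly less than $1$ globally in $t$. Both are remedied by first stopping at $\sigma_R := \inf\{t : \norm{X_n(t)}\geq R\}\wedge T$ (on $[\![0,\sigma_R]\!]$ everything is trivially in $L^\infty$), running the Gronwall-type closure on a small time interval $[0,t_0]$ with $t_0$ depending only on the data, iterating over the finitely many subintervals covering $[0,T]$, and letting $R\to\infty$ by monotone convergence, using $\sigma_R\uparrow T$ a.s.\ by pathwise continuity (Theorem~\ref{thm:cont}). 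The bounds on the $V$-norm and on $j(X_n)+j^*(\xi_n)$ then follow from the same It\^o identity once the supremum bound is in hand.

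The main technical obstacle is precisely the case $p\in(0,2)$: the Gronwall-type closure is \emph{a priori} circular, since the quantity to be bounded appears on both sides with coefficients that cannot be jointly controlled without first ensuring finiteness and, separately, restricting the time horizon. The combination of stopping-time localization (for finiteness) and time-interval iteration (for coefficient control) is the natural tool to break this circularity, and relies on the strong pathwise continuity of the approximating processes afforded by Theorem~\ref{thm:cont}.
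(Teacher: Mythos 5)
Your proposal is essentially correct, but at the crucial step it takes a genuinely different route from the paper. You close the estimate by a Gronwall-type argument, which for $p\in(0,2)$ forces you to (i) localize with the stopping times $\sigma_R$ to secure a priori finiteness and (ii) iterate over small time intervals $[0,t_0],[t_0,2t_0],\dots$ because the coefficient $t^{p/2}$ in front of $\E\sup_{s\le t}\norm{X_n(s)}^p$ cannot be made small globally. The paper instead multiplies the It\^o identity by $e^{-2\alpha t}$ and integrates by parts: this produces the extra positive term $2\alpha\int_0^t e^{-2\alpha s}\norm{X(s)}^2\,ds$ on the left-hand side, whose coefficient $\alpha$ is at your disposal while all the $L^2(0,T;H)$-type terms on the right-hand side carry $\alpha$-independent constants. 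Choosing $\varepsilon$ small (for the BDG term, via Lemma~\ref{lm:BDGY}) and then $\alpha$ large absorbs everything in one stroke, with no Gronwall, no case distinction between $p\ge 2$ and $p<2$, and no iteration; the only localization needed is the stopping times $T_n:=\inf\{t:\norm{X(t)}\ge n\}\wedge T$ built from the solution $X$ itself (the paper works directly with $X$, not with the approximating sequence $X_n$, since existence is already settled). Your route does work, and the reduction of general $X_0\in L^p$ to $L^2\cap L^p$ data via truncation and Corollary~\ref{cor:loc} is sound, but two points deserve attention: first, the a priori finiteness issue you flag for $p\in(0,2)$ is equally present for $p>2$ (the solutions of \eqref{eq:n} are only known to lie in $\cJ_2$, so $\E\sup\norm{X_n}^p$ is not finite a priori for $p>2$ either), so the $\sigma_R$-localization must be invoked there as well before absorbing; second, when iterating over subintervals you should note explicitly that the implied constants compound only finitely many times and remain independent of $n$ and $R$. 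The exponential-weight device is worth internalizing precisely because it dissolves both complications simultaneously and uniformly in $p$.
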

\begin{proof}
  It\^o's formula yields
  \begin{align*}
    &\norm{X(t)}^2
      + 2\int_0^{t} \ip{AX(s)}{X(s)}\,ds
      + 2\int_0^{t} \!\!\int_D\xi(s)X(s)\,dx\,ds\\
    &\hspace{3em} = \norm{X_0}^2
      + \int_0^{t} \norm[\big]{B(s,X(s))}^2_{\cL^2(U,H)}\,ds
      + 2\int_0^{t} X(s)B(s,X(s))\,dW(s).
  \end{align*}
  For any $\alpha>0$, it follows by the integration-by-parts formula
  that
  \begin{align*}
  &e^{-2\alpha t} \norm{X(t)}^2
    + 2\alpha \int_0^t e^{-2\alpha s} \norm{X(s)}^2\,ds
    + 2\int_0^t e^{-2\alpha s} \ip{AX(s)}{X(s)}\,ds\\
  &\quad + 2\int_0^t\!\!\int_D e^{-2\alpha s} \xi(s)X(s)\,dx\,ds\\
  &\quad \hspace{3em} = \norm{X_0}^2
    + \int_0^t e^{-2\alpha s} \norm[\big]{B(s,X(s))}^2_{\cL^2(U,H)}\,ds
    + 2\int_0^t e^{-2\alpha s} X(s)B(s, X(s))\,dW(s).
  \end{align*}
  Let $M$ denote the stochastic integral on the right-hand side, and
  $Y(t):=e^{-\alpha t}X(t)$. Since $X$ has continuous paths in $H$,
  one can introduce the sequence of stopping times
  $(T_n)_{n\in\enne}$, increasing to $T$, as
  \[
  T_n := \inf \bigl\{ t \geq 0: \, \norm{X(t)} \geq n \bigr\} \wedge T.
  \]
  It follows by the local Lipschitz-continuity property of $B$ that
  \begin{align*}
  &\norm{Y^{T_n}(t)}^2
    + 2\alpha \int_0^{t \wedge T_n} \norm{Y(s)}^2\,ds
    + 2C \int_0^{t  \wedge T_n}  \norm{Y(s)}^2_V\,ds
    +2\int_0^{t \wedge T_n}\!\!\int_D e^{-2\alpha s} \xi(s)X(s)\,dx\,ds\\
  &\hspace{3em} \leq \norm{X_0}^2
    + \int_0^{t \wedge T_n} e^{-2\alpha s}
    \norm[\big]{B_n(s,X(s))}^2_{\cL^2(U,H)}\,ds
    + 2M^{T_n}(t).
  \end{align*}
  Recalling that $B_n=B(\cdot,\cdot,\sigma_n(\cdot))$ and
  $\sigma_n$ is a contraction in $H$, and that $B$ grows at most
  linearly, one has
  \[
  e^{-2\alpha s} \norm[\big]{B_n(s,X(s))}^2_{\cL^2(U,H)}
  \lesssim e^{-2\alpha s} + \norm{Y(s)}^2,
  \]
  hence
  \begin{equation}
    \label{eq:alfetta}
    \int_0^{t \wedge T_n} e^{-2\alpha s}
    \norm[\big]{B_n(s,X(s))}^2_{\cL^2(U,H)}\,ds \lesssim
  \frac{1}{2\alpha} + \int_0^{t \wedge T_n} \norm{Y(s)}^2\,ds.
  \end{equation}
  Taking supremum in time and the $L^{p/2}(\Omega)$-(quasi)norm,
  recalling the BDG inequality and the fact that
  $e^{-\alpha t}\xi_nX_n\geq e^{-\alpha T}\xi_nX_n$, we are left with
  \begin{align*}
  &\norm[\big]{Y_{T_n}^*}^2_{L^p(\Omega)}
    + \alpha \norm[\big]{Y\ind{[\![0,T_n]\!]}}^2_{L^p(\Omega;L^2(0,T;H))}
  + \norm[\big]{Y\ind{[\![0,T_n]\!]}}^2_{L^p(\Omega;L^2(0,T;V))}\\
  &\qquad +e^{-\alpha T}
    \norm[\big]{\xi X\ind{[\![0,T_n]\!]}}_{L^{p/2}(\Omega; L^1((0,T)\times D))}\\
  &\qquad\qquad \lesssim \norm[\big]{X_0}^2_{L^p(\Omega;H)}
    + \frac{1}{2\alpha}
    + \norm[\big]{Y\ind{[\![0,T_n]\!]}}^2_{L^p(\Omega;L^2(0,T;H))}
    + \norm[\big]{[M,M]_{T_n}^{1/2}}_{L^{p/2}(\Omega)}.
  \end{align*}
  Lemma~\ref{lm:BDGY} and \eqref{eq:alfetta} yield
  \[
  [M,M]^{1/2}_{T_n}
  \lesssim \varepsilon Y_{T_n}^{*2}
    + \frac{1}{\varepsilon} \biggl( \frac{1}{2\alpha}
    + \norm[\big]{Y\ind{[\![0,T_n]\!]}}^2_{L^2(0,T;H)} \biggr),
  \]
  hence
  \[
  \norm[\big]{[M,M]_{T_n}^{1/2}}_{L^{p/2}(\Omega)}
  \lesssim \varepsilon \norm[\big]{Y_{T_n}^*}^2_{L^p(\Omega)}
  + \frac{1}{\varepsilon}%
  \norm[\big]{Y\ind{[\![0,T_n]\!]}}^2_{L^p(\Omega;L^2(0,T;H))}
  + \frac{1}{2\alpha\varepsilon},
  \]
  where the implicit constant is independent of $\alpha$ and of an
  arbitrary $\varepsilon>0$ to be chosen later. We thus have
  \begin{align*}
  &\norm[\big]{Y_{T_n}^*}^2_{L^p(\Omega)}
    + \alpha \norm[\big]{Y\ind{[\![0,T_n]\!]}}^2_{L^p(\Omega;L^2(0,T;H))}
  + \norm[\big]{Y\ind{[\![0,T_n]\!]}}^2_{L^p(\Omega;L^2(0,T;V))}\\
    &\qquad +e^{-\alpha T}
      \norm[\big]{\xi X\ind{[\![0,T_n]\!]}}_{L^{p/2}(\Omega; L^1((0,T)\times D))}\\
  &\qquad\qquad \lesssim \norm[\big]{X_0}^2_{L^p(\Omega;H)}
    + \varepsilon \norm[\big]{(Y^{T_n})^*_T}^2_{L^p(\Omega)}
    + (1+1/\varepsilon) \norm[\big]{Y\ind{[\![0,T_n]\!]}}^2_{L^p(\Omega;L^2(0,T;H))}
    + \frac{1}{2\alpha} (1+1/\varepsilon).
\end{align*}
Since the implicit constant is independent of $\alpha$ and
$\varepsilon$, one can take $\varepsilon$ small enough and $\alpha$
large enough so that
\[
  \norm[\big]{Y_{T_n}^*}^2_{L^p(\Omega)}
  + \norm[\big]{Y\ind{[\![0,T_n]\!]}}^2_{L^p(\Omega;L^2(0,T;V))}
  +\norm[\big]{\xi X\ind{[\![0,T_n]\!]}}_{L^{p/2}(\Omega; L^1((0,T)\times D))}
  \lesssim 1 + \norm[\big]{X_0}^2_{L^p(\Omega;H)}.
\]
As the implicit constant is independent of $n$ and $T_n$ increases to $T$,
we get
\[
  \norm[\big]{Y}^2_{L^p(\Omega;C([0,T];H))}
  + \norm[\big]{Y}^2_{L^p(\Omega;L^2(0,T;V))}
  +\norm[\big]{\xi X}_{L^{p/2}(\Omega; L^1((0,T)\times D))}
  \lesssim 1 + \norm[\big]{X_0}^2_{L^p(\Omega;H)}.
\]
The proof is completed noting that, for
$E:=C([0,T];H) \cap L^2(0,T;V)$,
\[
  \norm{X}_E \leq e^{\alpha T} \norm{Y}_E.
  \qedhere
\]
\end{proof}

If $B$ is Lipschitz-continuous, related arguments show that the
solution map is Lipschitz-continuous between spaces with finite $p$-th
moment in the whole range $p \in [0,\infty[$. We consider the cases
$p>0$ and $p=0$ separately.
\begin{prop}
  Let $p>0$. If $B$ is Lipschitz-continuous in the sense of assumption
  \emph{(B1)}, then the solution map
  \begin{align*}
    L^p(\Omega;H) &\longto L^p(\Omega;C([0,T];H)) \cap
                    L^p(\Omega;L^2(0,T;V))\\
    X_0 &\longmapsto X
  \end{align*}
  is Lipschitz-continuous.
\end{prop}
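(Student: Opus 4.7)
The plan is to imitate the proof of the previous moment-estimate theorem, but applied to the difference $Z := X - Y$ of two solutions corresponding to initial data $X_0, Y_0 \in L^p(\Omega;H)$, exploiting the global Lipschitz condition (B1) to replace the local Lipschitz constant $N_n$ (which depended on the stopping level $n$) by the uniform constant $N$. By the previous theorem, both $X$ and $Y$ belong to $\mathscr{J}_p$, so all quantities below will be finite.

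Setting $Z_0 := X_0 - Y_0$ and writing $\zeta := \xi - \eta$, one has
\[
dZ + AZ\,dt + \zeta\,dt = \bigl(B(\cdot,X) - B(\cdot,Y)\bigr)\,dW, \qquad Z(0) = Z_0.
\]
I would apply the generalized It\^o formula of Theorem~\ref{thm:Ito} to $\norm{Z}^2$, multiplied by the exponential weight $e^{-2\alpha t}$ (for $\alpha>0$ to be chosen). Using coercivity of $A$, monotonicity of $\beta$ (so that $\int_D \zeta\, Z\,dx \geq 0$), and assumption (B1) (so that $\norm{B(X)-B(Y)}_{\cL^2(U,H)}^2 \leq N^2\norm{Z}^2$), and setting $\tilde Y(t) := e^{-\alpha t} Z(t)$, one obtains
\[
\norm[\big]{\tilde Y(t)}^2 + 2\alpha\!\int_0^t\! \norm[\big]{\tilde Y}^2\,ds + 2C\!\int_0^t\! \norm[\big]{\tilde Y}_V^2\,ds \leq \norm{Z_0}^2 + N^2\!\int_0^t\!\norm[\big]{\tilde Y}^2\,ds + 2M(t),
\]
where $M$ is the local martingale with integrand $e^{-2\alpha s} Z(B(X)-B(Y))$, which factors as $\tilde Y(s) \cdot e^{-\alpha s}(B(X)-B(Y))(s)$.

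To control the stochastic term, I would introduce the localising sequence $T_n := \inf\{t \geq 0:\, \norm{X(t)} \vee \norm{Y(t)} \geq n\} \wedge T$, which increases to $T$ by pathwise continuity. Stopping at $T_n$, taking supremum in time and the $L^{p/2}(\Omega)$-(quasi)norm on both sides, and applying Lemma~\ref{lm:BDGY} with $F = \tilde Y$ and $G = e^{-\alpha\cdot}(B(X)-B(Y))\ind{[\![0,T_n]\!]}$, one estimates
\[
\norm[\big]{M^*_{T_n}}_{L^{p/2}(\Omega)} \lesssim \varepsilon \norm[\big]{\tilde Y^*_{T_n}}^2_{L^p(\Omega)} + \frac{N^2}{\varepsilon}\norm[\big]{\tilde Y\ind{[\![0,T_n]\!]}}^2_{L^p(\Omega;L^2(0,T;H))}.
\]
Choosing $\varepsilon$ small enough to absorb the first term into the left-hand side, and then $\alpha$ large enough so that $2\alpha$ exceeds $N^2 + N^2/\varepsilon$, the $L^2(0,T;H)$-terms can likewise be absorbed, leaving
\[
\norm[\big]{\tilde Y^*_{T_n}}^2_{L^p(\Omega)} + \norm[\big]{\tilde Y\ind{[\![0,T_n]\!]}}^2_{L^p(\Omega;L^2(0,T;V))} \lesssim \norm[\big]{X_0 - Y_0}^2_{L^p(\Omega;H)}.
\]
Letting $n \to \infty$ by Fatou's lemma (the implicit constant being independent of $n$), and reverting from $\tilde Y$ to $Z$ via $\norm{Z}_E \leq e^{\alpha T}\norm{\tilde Y}_E$ for $E := C([0,T];H) \cap L^2(0,T;V)$, one concludes the Lipschitz estimate.

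The main subtle point is choosing $\varepsilon$ and $\alpha$ in the right order so that both the supremum term and the $L^2(0,T;H)$ term can be absorbed into the left-hand side; this works precisely because the constants arising from Lemma~\ref{lm:BDGY} and from the monotonicity estimate for $A$ are independent of $\alpha$ and $\varepsilon$. For $p<2$ the quantity $\norm{\cdot}_{L^{p/2}(\Omega)}$ is only a quasi-norm, but the quasi-triangle inequality and the BDG inequality are still valid in this range (as already used in Lemma~\ref{lm:BDGY} and in the previous theorem), so no additional effort is required.
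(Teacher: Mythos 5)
Your argument is correct and follows essentially the same route as the paper's proof: It\^o's formula for $\norm{X-Y}^2$ with the exponential weight $e^{-2\alpha t}$, positivity of the $\beta$-term by monotonicity, coercivity of $A$, assumption (B1), Lemma~\ref{lm:BDGY} for the martingale term, and the choice of first $\varepsilon$ small and then $\alpha$ large. The only difference is your extra localization by the stopping times $T_n$, which the paper omits because the previous theorem already places $X$ and $Y$ in $L^p(\Omega;E)$, making all quantities finite without stopping; your version is harmless and, if anything, slightly more cautious.
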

\begin{proof}
  Let $X_0$, $Y_0 \in L^p(\Omega;H)$. The previous theorem asserts
  that the (unique) strong solutions $(X,\xi)$ and $(Y,\eta)$ to
  \eqref{eq:0} with initial condition $X_0$ and $Y_0$, respectively,
  belong to $L^p(\Omega;E)$, where, as before, $E$ stands for
  $C([0,T];H) \cap L^2(0,T;V)$. By It\^o's formula,
  \begin{align*}
    &\norm{X-Y}^2
      + 2\int_0^t \ip{A(X-Y)}{X-Y}\,ds
    + 2\int_0^{t} \!\!\int_D(\xi-\eta)(X-Y)\,ds\\
    &\hspace{3em} = \norm{X_0-Y_0}^2
      + \int_0^{t} \norm[\big]{B(X)-B(Y)}^2_{\cL^2(U,H)}\,ds\\
    &\hspace{3em}\quad
      + 2\int_0^{t} (X-Y)(B(X)-B(Y))\,dW,
  \end{align*}
  where the third term on the left-hand side is positive by
  monotonicity of $\beta$. Let $\alpha>0$ be a constant to be chosen
  later, and set $X_\alpha:= X e^{-\alpha \cdot}$,
  $Y_\alpha:= Y e^{-\alpha \cdot}$. It follows by the
  integration-by-parts formula, in complete analogy to the proof of
  the previous theorem, by the Lipschitz continuity of $B$, and by the
  coercivity of $A$, that
  \begin{align*}
  &\norm{X_\alpha - Y_\alpha}^2
    + \alpha \int_0^t \norm{X_\alpha-Y_\alpha}^2\,ds
    + \int_0^t  \norm{X_\alpha-Y_\alpha}^2_V\,ds\\
  &\hspace{3em} \lesssim \norm{X_0 - Y_0}^2
    + \int_0^t \norm[\big]{X_\alpha-Y_\alpha}^2\,ds
    + M,
  \end{align*}
  where $M:= \bigl( e^{-2\alpha \cdot}(X-Y)(B(X)-B(Y)) \bigr) \cdot W$.
  Taking supremum in time and the $L^{p/2}(\Omega)$-(quasi)norm yields
  \begin{align*}
  &\norm[\big]{X_\alpha-Y_\alpha}^2_{L^p(\Omega;C([0,T];H))}
    + \alpha \norm[\big]{X_\alpha-Y_\alpha}^2_{L^p(\Omega;L^2(0,T;H))}
  + \norm[\big]{X_\alpha-Y_\alpha}^2_{L^p(\Omega;L^2(0,T;V))}\\
  &\hspace{3em} \lesssim \norm[\big]{X_0-Y_0}^2_{L^p(\Omega;H)}
    + \norm[\big]{X_\alpha - Y_\alpha}^2_{L^p(\Omega;L^2(0,T;H))}
    + \norm[\big]{M^*_{T}}_{L^{p/2}(\Omega)},
  \end{align*}
  where, by Lemma~\ref{lm:BDGY},
  \[
  \norm[\big]{M^*_{T}}_{L^{p/2}(\Omega)} \lesssim
  \varepsilon \norm[\big]{X_\alpha-Y_\alpha}^2_{L^p(\Omega;C([0,T];H))}
  + N(\varepsilon) \norm[\big]{X_\alpha - Y_\alpha}^2_{L^p(\Omega;L^2(0,T;H))}
  \]
  for any $\varepsilon>0$. Choosing first $\varepsilon$ small enough,
  then $\alpha$ sufficiently large, we obtain
  \[
    \norm[\big]{X_\alpha-Y_\alpha}^2_{L^p(\Omega;C([0,T];H))}
    + \norm[\big]{X_\alpha-Y_\alpha}^2_{L^p(\Omega;L^2(0,T;V))}
    \lesssim \norm[\big]{X_0-Y_0}^2_{L^p(\Omega;H)},
  \]
  which completes the proof noting that
  $\norm{X-Y}_E \leq e^{\alpha T} \norm{X_\alpha-Y_\alpha}_E$.
\end{proof}

Lipschitz continuity of the solution map can also be obtained in the
case $p=0$. As already seen, the space
$E:=C([0,T];H) \cap L^2(0,T;V)$, equipped with the norm
\[
\norm[\big]{u}_E := \norm[\big]{u}_{C([0,T];H)}
+ \norm[\big]{u}_{L^2(0,T;V)},
\]
is a Banach space. Then $L^0(\Omega;E)$, endowed with the topology of
convergence in probability, is a complete metrizable topological
vector space. In particular, the distance
\[
d(f,g) := \E\bigl( \norm{f-g}_E \wedge 1 \bigr)
\]
generates its topology.
\begin{prop}
  If $B$ is Lipschitz-continuous in the sense of assumption
  \emph{(B1)}, then the solution map
  \begin{align*}
    L^0(\Omega;H) &\longto L^0(\Omega;E)\\
    X_0 &\longmapsto X
  \end{align*}
  is Lipschitz-continuous.
\end{prop}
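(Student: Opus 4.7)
The plan is to derive the $L^0$-Lipschitz bound $d(X,Y) \leq C\, d(X_0,Y_0)$, with $d$ denoting the metric generating convergence in probability, by localising onto an $\cF_0$-measurable event on which the initial data are simultaneously bounded (so that the preceding $L^1$-Lipschitz proposition becomes applicable) and on which $\norm{X_0-Y_0} \leq 1$ (so that the resulting $L^1$-norm is automatically dominated by $d(X_0,Y_0)$).

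Concretely, given $X_0, Y_0 \in L^0(\Omega;H)$ with corresponding strong solutions $X$, $Y$, for every $R>0$ I would introduce the $\cF_0$-measurable set
\[
  \Gamma_R := \bigl\{ \norm{X_0} \vee \norm{Y_0} \leq R \bigr\}
  \cap \bigl\{ \norm{X_0-Y_0} \leq 1 \bigr\},
\]
so that $X_{0\Gamma_R}$ and $Y_{0\Gamma_R}$ belong to $L^\infty(\Omega;H) \subset L^1(\Omega;H)$. Let $(X^R,\xi^R)$ and $(Y^R,\eta^R)$ denote the strong solutions with these truncated initial data. By the preceding proposition applied with $p=1$,
\[
  \E\norm{X^R - Y^R}_E \leq C_1\, \E\bigl( \norm{X_0-Y_0}\ind{\Gamma_R} \bigr)
  \leq C_1\, \E\bigl( \norm{X_0-Y_0}\wedge 1 \bigr) = C_1\, d(X_0,Y_0),
\]
the Lipschitz constant $C_1$ being independent of the data, and the second inequality exploiting precisely the constraint $\norm{X_0-Y_0}\leq 1$ embedded in $\Gamma_R$. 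Corollary~\ref{cor:loc} guarantees that $X_{\Gamma_R} = X^R_{\Gamma_R}$ and $Y_{\Gamma_R} = Y^R_{\Gamma_R}$, whence
\[
  d(X,Y) \leq \E\bigl( \norm{X^R-Y^R}_E \ind{\Gamma_R} \bigr) + \P(\Gamma_R^c)
  \leq C_1\, d(X_0,Y_0) + \P(\Gamma_R^c).
\]

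To conclude I would let $R\to\infty$: since $X_0, Y_0$ are almost surely $H$-valued, $\Gamma_R$ increases to $\{\norm{X_0-Y_0}\leq 1\}$ up to a null set, so $\P(\Gamma_R^c)$ decreases to $\P(\norm{X_0-Y_0}>1) \leq \E(\norm{X_0-Y_0}\wedge 1) = d(X_0,Y_0)$. Taking this limit yields $d(X,Y) \leq (C_1+1)\, d(X_0,Y_0)$, which is the assertion. The only delicate ingredient is the design of the truncation $\Gamma_R$, which must simultaneously make the restricted initial data $L^1$-integrable (so that the $L^1$-Lipschitz proposition can be invoked) and keep the emerging $L^1$-norm controlled by the $L^0$-metric; intersecting the norm-truncation with $\{\norm{X_0-Y_0}\leq 1\}$ is what secures the latter, while everything else reduces to a direct application of Corollary~\ref{cor:loc} to transfer estimates between $(X,Y)$ and $(X^R,Y^R)$ on $\Gamma_R$.
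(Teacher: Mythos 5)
Your argument is correct, but it takes a genuinely different route from the paper's. The paper localizes \emph{in time}: it introduces the stopping time $T_1$ at which $(X-Y)^*_t + (\int_0^t\norm{X-Y}_V^2)^{1/2}$ first reaches $1$, reruns the It\^o/coercivity/BDG machinery on the square root of the energy identity stopped at $T_1$, and then observes that the stopped quantity is exactly $\norm{X-Y}_{E}\wedge 1$ while $\norm{X_0-Y_0}\le 1$ on $\{T_1>0\}$. You instead localize \emph{in $\Omega$ at time zero}: you truncate the initial data on the $\cF_0$-set $\Gamma_R$, apply the already-established $p=1$ Lipschitz estimate to the truncated problem as a black box, transfer back to $(X,Y)$ via the local property of Corollary~\ref{cor:loc}, and absorb the complement through $\P(\Gamma_R^c)\to\P(\norm{X_0-Y_0}>1)\le \E(\norm{X_0-Y_0}\wedge 1)$. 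All the individual steps check out: $\Gamma_R$ is $\cF_0$-measurable, $X_0\ind{\Gamma_R}=(X_0\ind{\Gamma_R})\ind{\Gamma_R}$ so Corollary~\ref{cor:loc} does give $X\ind{\Gamma_R}=X^R\ind{\Gamma_R}$, the constant $C_1$ from the $p=1$ proposition is independent of the data, and the monotone limit in $R$ is legitimate since $X_0,Y_0$ are a.s.\ $H$-valued. What your approach buys is modularity and brevity (no new energy estimate is needed, and it is transparent where the ``$\wedge\,1$'' of the $L^0$-metric enters, namely through the constraint built into $\Gamma_R$ and through $\P(\Gamma_R^c)$); what the paper's approach buys is a self-contained estimate that never detours through auxiliary solutions with truncated initial data and that controls the stopped supremum directly. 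The final constants differ ($C_1+1$ for you versus the paper's implicit constant) but both are data-independent, which is all the statement requires.
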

\begin{proof}
  Let $X_0$, $Y_0 \in L^0(\Omega,\cF_0,\P;H)$, and $(X,\xi)$,
  $(Y,\eta)$ the unique solutions in $\cJ_0$ to equation \eqref{eq:0}
  with initial datum $X_0$ and $Y_0$, respectively.  The stopping time
  \[
  T_1 := \inf\Bigl\{ t \geq 0: \, (X-Y)_t^* %
  + \biggl( \int_0^t \norm{X(s)-Y(s)}_V^2\,ds \biggr)^{1/2} \geq 1
  \Bigr\} \wedge T.
  \]
  is well defined thanks to the pathwise continuity of $X$ and $Y$.
  For every $\alpha>0$, using the same notation as in the previous proof,
  Theorem~\ref{thm:Ito} yields, by monotonicity of $\beta$ and
  coercivity of $A$,
  \begin{align*}
    &\bigl( X_\alpha - Y_\alpha \bigr)_{t}^{*2} + \int_0^{t}
    \norm{X_\alpha(s)-Y_\alpha(s)}_V^2\,ds
    + \alpha \int_0^{t} \norm{X_\alpha(s)-Y_\alpha(s)}^2\,ds\\
    &\hspace{3em} \lesssim \norm{X_0-Y_0}^2
    + \int_0^t \norm[\big]{(B(X(s))-B(Y(s)))_\alpha}^2_{\cL^2(U,H)}\,ds\\
    &\hspace{3em} \quad + \bigl( (X_\alpha-Y_\alpha)(B(X)-B(Y))_\alpha \cdot W \bigr)^*_t
  \end{align*}
   Raising to the power $1/2$, stopping at $T_1$, and taking
  expectation, we get, by the Lipschitz continuity of $B$,
  \begin{align*}
    &\E\bigl( X_\alpha - Y_\alpha \bigr)_{T_1}^* %
    + \E\biggl( \int_0^{T_1} \norm{X_\alpha(s)-Y_\alpha(s)}_V^2\,ds \biggr)^{1/2}
    + \sqrt{\alpha} \E\biggl( \int_0^{T_1} \norm{X_\alpha(s)-Y_\alpha(s)}^2\,ds \biggr)^{1/2}\\
    &\hspace{3em} \lesssim \E\ind{[\![0,T_1]\!]}\norm{X_0-Y_0}
    + \E\biggl( \int_0^{T_1} \norm{X_\alpha(s)-Y_\alpha(s)}^2\,ds \biggr)^{1/2}\\
    &\hspace{3em} \quad + \E\bigl( (X_\alpha-Y_\alpha)(B(X)-B(Y))_\alpha \cdot W
    \bigr)^{*1/2}_{T_1},
  \end{align*}
  where, by Lemma~\ref{lm:BDGY} and Lipschitz continuity of $B$, the
  last term on the right-hand side is bounded by
  \[
  \varepsilon \E\bigl( X_\alpha - Y_\alpha \bigr)_{T_1}^* %
  + N(\varepsilon) \E\biggl( \int_0^{T_1} \norm{X_\alpha(s)-Y_\alpha(s)}^2\,ds \biggr)^{1/2}
  \]
  for every $\varepsilon>0$. Therefore, choosing $\varepsilon$ small
  enough and $\alpha$ large enough, we are left with
  \[
  \E\bigl( X - Y \bigr)_{T_1}^* + \E\biggl( \int_0^{T_1}
  \norm{X(s)-Y(s)}_V^2\,ds \biggr)^{1/2} \lesssim 
  \E\ind{[\![0,T_1]\!]}\norm{X_0-Y_0}.
  \]
  The proof is concluded noting that, by definition of $T_1$,
  \[
  \bigl( X - Y \bigr)_{T_1}^* + \biggl( \int_0^{T_1}
  \norm{X(s)-Y(s)}_V^2\,ds \biggr)^{1/2} =
  \norm[\big]{X-Y}_{C([0,T];H) \cap L^2(0,T;V)} \wedge 1,
  \]
  and $\norm{X_0-Y_0} \leq 1$ on $[\![0,T_1]\!]$, hence
  \[
  \E\ind{[\![0,T_1]\!]}\norm{X_0-Y_0}
  = \E\ind{[\![0,T_1]\!]} \bigl( \norm{X_0-Y_0} \wedge 1 \bigr)
  \leq \E\bigl( \norm{X_0-Y_0} \wedge 1 \bigr).
  \qedhere
  \]
\end{proof}


\section{A regularity result}
\label{sec:reg}
We are going to show that the regularity of the solution to equation
\eqref{eq:0} improves, if the initial datum and the diffusion
coefficient are smoother, irrespective of the (possible) singularity
of the drift coefficient $\beta$. In particular, we provide sufficient
conditions implying that the variational solution to \eqref{eq:0} is
also an analytically strong solution, in the sense that it takes
values in the domain of the part of $A$ in $H$ (see
{\S}\ref{sec:cont}). If the solution to \eqref{eq:0} generates a
Markovian semigroup on $C_b(H)$ admitting an invariant measure, we
also show that improved regularity of the solution carries over to
further regularity of the invariant measure, in the sense that its
support is made of smoother functions.
\begin{thm}
  \label{thm:reg1}
  Assume that the hypotheses of {\S}\ref{ssec:ass} are satisfied, that $A$ is symmetric and
  that
  \begin{equation}
    \label{hyp_reg}
    X_0\in L^2(\Omega,\cF_0,\P; V), \qquad
    B(\cdot,X) \in L^2(\Omega; L^2(0,T; \cL^2(U,V))).
  \end{equation}
  Then the unique solution $(X,\xi)$ to the equation \eqref{eq:0} satisfies
  \[
    X \in L^2(\Omega;C([0,T];H))\cap L^2(\Omega; L^\infty(0,T; V))
    \cap L^2(\Omega; L^2(0,T; \dom(A_2))).
  \]
\end{thm}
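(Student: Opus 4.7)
The strategy is to derive uniform a priori estimates on a Yosida-type approximation of \eqref{eq:0} in the stronger norms $L^\infty(0,T;V)$ and $L^2(0,T;\dom(A_2))$, then pass to the limit using the uniqueness results of Section~\ref{sec:X0}. For each $\lambda>0$, I would replace $\beta$ by its Yosida approximation $\beta_\lambda$, obtaining
\[
  dX_\lambda + AX_\lambda\,dt + \beta_\lambda(X_\lambda)\,dt = B(X_\lambda)\,dW, \qquad X_\lambda(0) = X_0.
\]
Since $\beta_\lambda$ is globally Lipschitz-continuous, Theorem~\ref{thm:WP} applied to the drift $A+\beta_\lambda$ yields a unique strong solution $X_\lambda \in \tilde{\cJ}_2$; standard convergence of Yosida approximations together with Lemma~\ref{lm:gm} give $X_\lambda\to X$ in $L^2(\Omega;C([0,T];H))\cap L^2(\Omega;L^2(0,T;V))$ and $\beta_\lambda(X_\lambda)\to\xi$ in $L^1(\Omega\times(0,T)\times D)$.

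The core computation is an It\^o formula for $\tfrac12\norm{X_\lambda}_V^2$, to be applied in the Gelfand triple $(\dom(A_2),V,\dom(A_2)')$. I would justify it by further regularizing via the sub-Markovian resolvent $(I+\delta A_1)^{-m}$ exactly as in the proof of Theorem~\ref{thm:Ito}, applying the classical It\^o formula to the smoothed process, and passing to the limit $\delta\to 0$. The outcome is
\begin{align*}
  \tfrac12\norm{X_\lambda(t)}_V^2 &+ \int_0^t \norm{AX_\lambda(s)}^2\,ds
    + \int_0^t \ip{AX_\lambda(s)}{\beta_\lambda(X_\lambda(s))}\,ds \\
  &= \tfrac12\norm{X_0}_V^2 + \tfrac12 \int_0^t \norm{B(X_\lambda(s))}_{\cL^2(U,V)}^2\,ds
    + \int_0^t \ip{AX_\lambda(s)}{B(X_\lambda(s))\,dW(s)}.
\end{align*}
The decisive observation is that the third term on the left-hand side is non-negative: since $\beta_\lambda=\partial j_\lambda$ with $j_\lambda$ convex, $C^1$, and $j_\lambda(0)=0$, the sub-Markovian character of the resolvent of $A_1$ (hypothesis (iii)) forces $\ip{AX_\lambda}{\beta_\lambda(X_\lambda)}_H\geq 0$ via the Beurling--Deny criterion. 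It is precisely here that the possible singularity of $\beta$ becomes irrelevant, the inequality holding uniformly in $\lambda$.

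To bound the stochastic integral, by Cauchy--Schwarz in the Gelfand duality and boundedness of $A\in\cL(V,V')$, its quadratic variation satisfies
\[
  \Bigl[\int_0^\cdot\ip{AX_\lambda}{B(X_\lambda)\,dW}\Bigr]_t
  \lesssim \int_0^t\norm{X_\lambda(s)}_V^2\,\norm{B(X_\lambda(s))}_{\cL^2(U,V)}^2\,ds.
\]
The BDG inequality combined with a Young-type estimate in the spirit of Lemma~\ref{lm:BDGY} then allows absorbing a small multiple of $\E\norm{X_\lambda}_{L^\infty(0,T;V)}^2$ on the left-hand side, giving, after Gronwall,
\[
  \E\norm{X_\lambda}_{L^\infty(0,T;V)}^2
  + \E\norm{AX_\lambda}_{L^2(0,T;H)}^2
  \lesssim \E\norm{X_0}_V^2 + \E\norm{B(X_\lambda)}_{L^2(0,T;\cL^2(U,V))}^2,
\]
with implicit constant independent of $\lambda$. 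The right-hand side is controlled uniformly in $\lambda$ via $X_\lambda\to X$ in $L^2(\Omega;C([0,T];H))$, the local Lipschitz-continuity of $B$, and hypothesis \eqref{hyp_reg}. Weak-$*$ compactness of $(X_\lambda)$ in $L^2(\Omega;L^\infty(0,T;V))$, weak compactness of $(AX_\lambda)$ in $L^2(\Omega\times(0,T);H)$, identification of the limit via uniqueness, and lower semicontinuity of norms finally yield the claimed regularity for $X$; strong pathwise continuity in $H$ is already secured by Theorem~\ref{thm:cont}.

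The main obstacle I anticipate is the rigorous application of the It\^o formula in the non-standard Gelfand triple $(\dom(A_2),V,\dom(A_2)')$, since the drift $-AX_\lambda-\beta_\lambda(X_\lambda)$ lives naturally in $V'$ rather than $\dom(A_2)'$, and the natural duality pairing to be extended is the $V$-inner product rather than the $H$-inner product. The resolvent smoothing performed in Theorem~\ref{thm:Ito} should supply the right template, but one must verify convergence term by term, notably ensuring that the Beurling--Deny non-negativity of $\ip{AX_\lambda}{\beta_\lambda(X_\lambda)}$ is preserved under the smoothing, which is exactly where the sub-Markovian property (iii) is invoked a second time.
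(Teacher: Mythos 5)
There is a genuine gap, and it sits exactly where you close the a priori estimate. Your approximating equation keeps the coupled diffusion coefficient $B(X_\lambda)$, so your final bound reads
\[
  \E\norm{X_\lambda}_{L^\infty(0,T;V)}^2 + \E\norm{AX_\lambda}_{L^2(0,T;H)}^2
  \lesssim \E\norm{X_0}_V^2 + \E\norm{B(X_\lambda)}_{L^2(0,T;\cL^2(U,V))}^2.
\]
Hypothesis \eqref{hyp_reg} only gives $B(\cdot,X)\in L^2(\Omega;L^2(0,T;\cL^2(U,V)))$ for the actual solution $X$; there is no assumption that $B$ maps into $\cL^2(U,V)$ for any other argument, and the (local) Lipschitz continuity of $B$ is stated in the $\cL^2(U,H)$ norm only, so the convergence $X_\lambda\to X$ in $H$-based spaces gives no control whatsoever on $\norm{B(X_\lambda)}_{\cL^2(U,V)}$. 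The right-hand side of your estimate is therefore not known to be finite, let alone uniformly bounded in $\lambda$. The paper avoids this by freezing the noise: it sets $G:=B(\cdot,X)$ and studies $dX_\lambda + A_\lambda X_\lambda\,dt+\beta_\lambda(X_\lambda)\,dt = G\,dW$, so that the only object whose $\cL^2(U,V)$-norm enters the estimates is precisely the one controlled by \eqref{hyp_reg}. You should do the same; the identification of the limit with $X$ still works because $X$ itself solves the equation with diffusion term $G\,dW$.

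The second issue is the one you flag yourself: the It\^o formula for $\tfrac12\norm{\cdot}_V^2$ in the triple $(\dom(A_2),V,\dom(A_2)')$. Besides the duality problems you mention, the formula as you write it already contains $\int_0^t\norm{AX_\lambda(s)}^2\,ds$, so it presupposes $X_\lambda\in L^2(0,T;\dom(A_2))$, which is essentially what is to be proved; and the resolvent smoothing of Theorem~\ref{thm:Ito} regularizes on the $L^1$--$L^\infty$ scale, not towards $\dom(A_2)$, so it does not obviously supply the needed template. The paper sidesteps the whole difficulty by also replacing $A$ with its Yosida approximation $A_\lambda=AJ_\lambda$ and applying the classical It\^o formula to the genuinely $C^2$ function $\varphi_\lambda(u)=\tfrac12\ip{A_\lambda u}{u}$ on $H$; coercivity then gives $\varphi_\lambda(X_\lambda)\gtrsim\norm{J_\lambda X_\lambda}_V^2$, the $\dom(A_2)$-bound comes from $\int\norm{A_\lambda X_\lambda}^2$, and one concludes by weak compactness with $A_\lambda X_\lambda\to AX$ weakly. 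Your positivity claim is correct in spirit --- the paper proves $\ip{A_\lambda u}{\beta_\lambda(u)}\geq 0$ via convexity of $j_\lambda$ and Jensen's inequality for the sub-Markovian resolvent (Lemma~\ref{lm:pos}) --- but in your formulation $\ip{AX_\lambda}{\beta_\lambda(X_\lambda)}$ again requires $X_\lambda(t)\in\dom(A_2)$ even to make sense, so it too is best handled at the level of the regularized operator $A_\lambda$.
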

For the proof we need the following positivity result.
\begin{lemma}\label{lm:pos}
  Let $A_\lambda$ and $\beta_\lambda$ be the Yosida approximations of
  $A_2$ and $\beta$, respectively. One has
  \[
    \ip[\big]{A_\lambda u}{\beta_\lambda(u)} \geq 0 \qquad \forall u \in H.
  \]
\end{lemma}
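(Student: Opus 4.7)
The plan is to exploit the decomposition $A_\lambda u = \lambda^{-1}(u - u_\lambda)$, where $u_\lambda := (I+\lambda A_2)^{-1} u = (I+\lambda A_1)^{-1} u$ (the two resolvents coincide on $H$, as already remarked in the proof of Theorem~\ref{thm:Ito}). Then
\[
\lambda \ip[\big]{A_\lambda u}{\beta_\lambda u}
= \int_D (u-u_\lambda)(x)\,\beta_\lambda(u)(x)\,dx,
\]
so it suffices to show that the right-hand side is nonnegative.

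The key tool is the Moreau--Yosida regularization $j_\lambda$ of $j$: this is a nonnegative convex $C^1$ function on $\erre$ with $j_\lambda(0)=0$ and $j_\lambda'=\beta_\lambda$, satisfying the bound $j_\lambda(r)\leq r^2/(2\lambda)$, so that $j_\lambda(u)\in L^1(D)$ whenever $u\in H$. The subgradient inequality for $j_\lambda$ applied pointwise with the values $u(x)$ and $u_\lambda(x)$ gives
\[
(u-u_\lambda)(x)\,\beta_\lambda(u)(x) \;\geq\; j_\lambda(u)(x)-j_\lambda(u_\lambda)(x)
\]
for a.e.\ $x\in D$, so matters reduce to showing $\int_D j_\lambda(u_\lambda)\,dx\leq\int_D j_\lambda(u)\,dx$.

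For this last step I would invoke the Jensen-type inequality for sub-Markovian operators (already used in the proof of Theorem~\ref{thm:Ito}): since $j_\lambda$ is convex with $j_\lambda(0)=0$ and $(I+\lambda A_1)^{-1}$ is sub-Markovian, one has
\[
j_\lambda(u_\lambda) \;\leq\; (I+\lambda A_1)^{-1} j_\lambda(u)
\]
pointwise on $D$. Integrating over $D$ and using that $(I+\lambda A_1)^{-1}$ is a positivity-preserving $L^1$-contraction, we get $\int_D j_\lambda(u_\lambda)\,dx\leq\int_D j_\lambda(u)\,dx$, which combines with the previous pointwise inequality to yield the claim.

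I do not expect any genuine obstacle here: the only mild technical point is the $L^1$-integrability of $j_\lambda(u)$ needed to apply Jensen's inequality, which is immediate from the quadratic upper bound on $j_\lambda$ recalled above. The whole argument runs in parallel to, and is simpler than, the uniform-integrability step in the proof of Theorem~\ref{thm:Ito}, and relies on no tools beyond those already deployed in the paper.
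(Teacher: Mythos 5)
Your proof is correct and follows essentially the same route as the paper's: the convexity (subgradient) inequality for the antiderivative $j_\lambda$ of $\beta_\lambda$ evaluated at $u$ and $u_\lambda=(I+\lambda A_2)^{-1}u$, followed by Jensen's inequality for the sub-Markovian resolvent and its $L^1$-contractivity to conclude $\int_D j_\lambda(u_\lambda)\leq\int_D j_\lambda(u)$. The only cosmetic difference is that you identify $j_\lambda$ as the Moreau--Yosida regularization and make the $L^1$-integrability of $j_\lambda(u)$ explicit via the bound $j_\lambda(r)\leq r^2/(2\lambda)$, which the paper leaves implicit.
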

\begin{proof}
  Let $j_\lambda:\erre \to \erre$ be the positive convex function
  defined as $j_\lambda(x):=\int_0^x \beta_\lambda(y)\,dy$. Then, for
  any $u$, $v \in L^2(D)$,
  \[
  j_\lambda(v)-j_\lambda(u) \geq j_\lambda'(u)(v-u)
  \]
  a.e. in $D$, hence, integrating over $D$,
  \[
    \int_D j_\lambda(v) - \int j_\lambda(u)
    \geq \ip[\big]{\beta_\lambda(u)}{v-u}.
  \]
  Choosing $v=(I+\lambda A_2)^{-1}u$, one has
  $u-v = \lambda A_\lambda u$, thus also
  \[
    \lambda \ip[\big]{\beta_\lambda(u)}{A_\lambda u}
    \geq \int j_\lambda(u) - \int j_\lambda((I+\lambda A_2)^{-1}u).
  \]
  Since $A_1$ is an extension of $A_2$ and $u \in L^1(D)$, Jensen's
  inequality for sub-Markovian operators and accretivity of $A_1$ in
  $L^1(D)$ imply
  \[
    \int j_\lambda((I+\lambda A_2)^{-1}u)
    \leq \int (I+\lambda A_2)^{-1} j_\lambda(u)
    \leq \int j_\lambda(u).
    \qedhere
  \]
\end{proof}

\begin{proof}[Proof of Theorem~\ref{thm:reg1}]
  For any $\lambda>0$, let $J_\lambda$ and $A_\lambda$ be the
  resolvent and the Yosida approximations of $A_2$, the part of $A$ in
  $H$, as defined in {\S}\ref{sec:cont}. That is,
  \[
  J_\lambda := (I+\lambda A_2)^{-1}, \qquad
  A_\lambda := \frac{1}{\lambda}(I-J_\lambda).
  \]
  We recall that $J_\lambda$ and $A_\lambda$ are bounded linear
  operators on $H$, that $J_\lambda$ is a contraction, and that
  $A_\lambda=AJ_\lambda$.

  Setting $G:=B(\cdot,X)$, let us consider the equation
  \[
    dX_\lambda(t) + A_\lambda X_\lambda(t)\,dt + \beta_\lambda(X_\lambda(t))\,dt = 
    G(t)\, dW(t), \qquad X_\lambda(0)=X_0.
  \]
  Since $A_\lambda$ is bounded and $\beta_\lambda$ is
  Lipschitz-continuous, it admits a unique strong solution
  \[
    X_\lambda \in L^2(\Omega;C([0,T];H)),
  \]
  for which It\^o's formula for the square of the $H$-norm yields
  \begin{align*}
    &\frac12 \norm{X_\lambda(t)}^2
      + \int_0^t\ip[\big]{A_\lambda X_\lambda(s)}{X_\lambda(s)}\,ds
      +\int_0^t\!\!\int_D\beta_\lambda(X_\lambda(s))X_\lambda(s)\,ds\\
    &\hspace{3em} = \frac12\norm{X_0}^2
      + \frac12 \int_0^t \norm{G(s)}^2_{\cL^2(U,H)}\,ds
      + \int_0^t X_\lambda(s)G(s)\,dW(s)
  \end{align*}
  for all $t \in [0,T]$ $\P$-almost surely. Writing
  \[
    X_\lambda = J_\lambda X_\lambda + X_\lambda - J_\lambda X_\lambda
    = J_\lambda X_\lambda + \lambda A_\lambda X_\lambda
  \]
  and recalling that $A_\lambda = AJ_\lambda$ and that $A$ is
  coercive, we have, after taking supremum in time and expectation,
  \begin{align*}
    &\frac12 \E \norm[\big]{X_\lambda}^2_{C([0,T];H)}
      + C \E\int_0^T \norm[\big]{J_\lambda X_\lambda(s)}_V^2\,ds\\
    &\hspace{3em} \qquad + \lambda \E\int_0^T \norm{A_\lambda X_\lambda(s)}^2\,ds
      + \E\int_0^T\!\!\int_D \beta_\lambda(X_\lambda(s))X_\lambda(s)\,ds\\
    &\hspace{3em} \leq \frac12 \E\norm{X_0}^2
      + \frac12\E\int_0^T \norm[\big]{G(s)}^2_{\cL^2(U,H)}\,ds
      +\E\sup_{t\in[0,T]} \abs[\bigg]{\int_0^t X_\lambda(s)G(s)\,dW(s)},
  \end{align*}
  where, by Lemma~\ref{lm:BDGY}, the last term on the right-hand side
  is bounded by
  \[
    \varepsilon \E \norm{X_\lambda}^2_{C([0,T];H)}
    + C_\varepsilon \E\int_0^T \norm[\big]{G(s)}^2_{\cL^2(U,H)}\,ds,
  \]
  so that, rearranging terms and choosing $\varepsilon$ small enough,
  we deduce that there exists a constant $N>0$ independent of
  $\lambda$ such that
  \begin{equation}
    \label{estimates1}
    \norm[\big]{X_\lambda}_{L^2(\Omega;C([0,T];H))}^2
    + \norm[\big]{J_\lambda X_\lambda}^2_{L^2(\Omega;L^2(0,T;V))}
    +\norm[\big]{%
      \beta_\lambda(X_\lambda)X_\lambda}_{L^1(\Omega\times(0,T)\times D)}
    < N.
  \end{equation}
  Moreover, let us introduce the function
  \begin{align*}
    \varphi_\lambda : H &\longto [0,+\infty[,\\
    u &\longmapsto \frac12 \ip{A_\lambda u}{u}.
  \end{align*}
  The linearity, boundedness and symmetry of $A_\lambda$ immediately imply
  that $\varphi_\lambda \in C^2(H)$ with
  $D\varphi_\lambda(u) = A_\lambda$, and, by linearity,
  $D^2\varphi_\lambda(u)=A_\lambda$, for all $u \in H$ (in the latter
  statement $A_\lambda$ has to be considered as an element of
  $\cL_2(H)$, the space of bounded bilinear maps on $H$).  It\^o's
  formula applied to $\varphi_\lambda(X_\lambda)$ then yields
  \begin{align*}
    &\varphi_\lambda(X_\lambda(t))
      + \int_0^t\norm[\big]{A_\lambda X_\lambda(s)}^2\,ds + 
    \int_0^t \ip[\big]{A_\lambda X_\lambda(s)}{\beta_\lambda(X_\lambda(s))}\,ds\\
    &\hspace{3em} = \varphi_\lambda(X_0)
      + \frac12 \int_0^t \operatorname{Tr} \bigl(%
      G^*(s) D^2\varphi_\lambda(X_\lambda(s)) G(s) \bigr)\,ds
    + \int_0^t A_\lambda X_\lambda(s) G(s)\,dW(s)
  \end{align*}
  for every $t\in[0,T]$ $\P$-almost surely. Writing, as before,
  $X_\lambda = J_\lambda X_\lambda + \lambda A_\lambda X_\lambda$, the
  coercivity of $A$ implies that
  \[
    \varphi_\lambda(X_\lambda) = \frac12 \ip[\big]{A_\lambda X_\lambda}{X_\lambda}
    \geq \frac{C}{2} \norm[\big]{J_\lambda X_\lambda}_V^2
    + \frac12 \lambda \norm[\big]{A_\lambda X_\lambda}^2
    \gtrsim \norm[\big]{J_\lambda X_\lambda}_V^2.
  \]
  The continuity of $J_\lambda$ on $V$ (see Lemma~\ref{lm:cvV}) instead implies
  \[
    \varphi_\lambda(X_0) = \ip[\big]{AJ_\lambda X_0}{X_0}
    \leq \norm[\big]{A}_{\cL(V,V')} \norm[\big]{J_\lambda X_0}_V
    \norm[\big]{X_0}_V
    \lesssim \norm[\big]{A}_{\cL(V,V')} \norm[\big]{X_0}_V^2.
  \]
  Denoting a complete orthonormal basis of $U$ by $(u_k)_k$,
  we have, recalling again the continuity of $J_\lambda$ on $V$ and
  that $D^2\varphi_\lambda(u) = A_\lambda$ for all
  $u \in H$,
  \begin{align*}
    \operatorname{Tr} \bigl( G^* D^2\varphi_\lambda(X_\lambda) G \bigr)
    &= \sum_{k=0}^\infty \ip[\big]{%
      G^* D^2\varphi_\lambda(X_\lambda) Gu_k}{u_k}_U
      = \sum_{k=0}^\infty \ip[\big]{%
      A_\lambda G u_k}{Gu_k}\\
  &= \sum_{k=0}^\infty \ip[\big]{AJ_\lambda G u_k}{Gu_k}
    \leq\norm[\big]{A}_{\cL(V,V')} \sum_{k=0}^\infty%
    \norm[\big]{J_\lambda Gu_k}_V \norm[\big]{Gu_k}_V\\
  &\lesssim \norm[\big]{A}_{\cL(V,V')} \sum_{k=0}^\infty \norm[\big]{Gu_k}_V^2
  =\norm[\big]{A}_{\cL(V,V')} \norm[\big]{G}_{\cL^2(U,V)}^2.
  \end{align*}
  Moreover, by Lemma~\ref{lm:BDGY},
  \begin{align*}
    \E \bigl( (A_\lambda X_\lambda G) \cdot W \bigr)^*_T
    &\lesssim \varepsilon \E \sup_{t\in[0,T]}
      \norm[\big]{A_\lambda X_\lambda(t)}^2_{V'}
      + N(\varepsilon) \E\int_0^T \norm[\big]{G(s)}^2_{\cL^2(U,V)}\,ds\\
    &\leq \varepsilon \norm[\big]{A}^2_{\cL(V,V')}
      \E \sup_{t\in[0,T]} \norm[\big]{J_\lambda X_\lambda(t)}^2_V
      + N(\varepsilon) \E \int_0^T \norm[\big]{G(s)}^2_{\cL^2(U,V)}\,ds
  \end{align*}
  for every $\varepsilon >0$. 
  Taking supremum in time and expectations in the It\^o formula for
  $\varphi_\lambda(X_\lambda)$, choosing $\varepsilon$ small enough we
  obtain, thanks to the previous lemma and hypothesis \eqref{hyp_reg},
  that there exists a constant $N>0$ independent of $\lambda$, such
  that
  \begin{equation}\label{estimates2}
    \norm[\big]{J_\lambda X_\lambda}^2_{L^2(\Omega; L^\infty(0,T; V))} + 
    \norm[\big]{A_\lambda X_\lambda}^2_{L^2(\Omega; L^2(0,T; H))} < N.
  \end{equation}
  Reasoning as in \cite{cm:AP18}, it follows by 
  \eqref{estimates1} that
  \begin{alignat*}{2}
    X_\lambda &\longto X
    &&\quad \text{ weakly in } L^2(\Omega; L^2(0,T; H)),\\
    J_\lambda X_\lambda &\longto X
    &&\quad \text{ weakly in } L^2(\Omega;L^2(0,T; V)),\\
    \beta_\lambda(X_\lambda) &\longto \xi
    &&\quad \text{ weakly in } L^1(\Omega\times(0,T)\times D),
  \end{alignat*}
  where $(X,\xi)$ is the unique solution to \eqref{eq:0}.  Moreover,
  by \eqref{estimates2} we have
  \[
    J_\lambda X_\lambda - X_\lambda = \lambda A_\lambda X_\lambda
    \longto 0 \qquad\text{ in } L^2(\Omega;L^2(0,T; H)),
  \]
  hence, $\P$-almost surely and for almost every $t \in (0,T)$,
  $J_\lambda X_\lambda(t)$ converges to $X(t)$ in $H$. Since the
  function $\norm{\cdot}_V^2$ is lower semicontinuous on $H$, we infer
  that
  \[
    \norm{X(t)}^2_V\leq \liminf_{\lambda\to 0}
    \norm[\big]{J_\lambda X_\lambda(t)}_V^2
  \]
  for almost every $t$. Hence, taking supremum in time and
  expectation, we deduce that
  \[
    X \in L^2(\Omega;L^\infty(0,T;V)).
  \]
  Moreover, by \eqref{estimates2} we
  also have
  \[
    A_\lambda X_\lambda \longto \eta \quad
    \text{ weakly in } L^2(\Omega; L^2(0,T; H)),
  \]
  hence, since $J_\lambda X_\lambda \to X$ weakly in
  $L^2(\Omega; L^2(0,T; V))$, by the continuity and the linearity of
  $A$ we necessarily have $\eta=AX$. In particular,
  $X \in L^2(\Omega;L^2(0,T;\dom(A_2)))$.
\end{proof}

As last result we show that if the solution to \eqref{eq:0} generates
a Markovian semigroup $P=(P_t)_{t\geq 0}$ on $C_b(H)$ admitting an
invariant measure, then the improved regularity of solutions given by
Theorem~\ref{thm:reg1} implies better integrability properties also
for the invariant measures. Existence and uniqueness of invariant
measures, ergodicity, and the Kolmogorov equation associated to
\eqref{eq:0} were studied in \cite{cm:inv}. In particular, we recall
the following result (see \cite[{\S}5]{cm:inv}). The set of invariant
measures of $P$ will be denoted by $\mathscr{M}$.
\begin{prop}
  Assume that the the hypotheses of \S\ref{ssec:ass} are satisfied,
  that $X_0\in H$ is non-random, and that $B$ is non-random and
  time-independent. Then the solution $X$ to \eqref{eq:0} is Markovian
  and its associated transition semigroup $P$ admits an
  ergodic invariant measure.  Moreover, there exists a positive
  constant $N$ such that 
  \[
    \int_H \norm{u}_V^2\,\mu(du)
    + \int_H\!\int_D j(u)\,\mu(du)
    + \int_H\!\int_Dj^* (\beta^0(u))\,\mu(du) < N
    \qquad \forall \mu \in \mathscr{M}.
  \]
  If $\beta$ is superlinear, there exists a unique invariant measure
  for $P$, which is strongly mixing as well.
\end{prop}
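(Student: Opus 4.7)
The plan is to prove the proposition in three stages: establishing the Markov and Feller properties together with existence of an invariant measure, deriving the uniform moment-type bound over $\mathscr{M}$, and finally obtaining uniqueness and strong mixing under superlinearity of $\beta$.

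First, under the time-homogeneity and deterministic-coefficient assumptions, the semigroup $P_tf(x):=\E f(X^x_t)$ is well-defined on $C_b(H)$, and the Lipschitz continuity of the solution map from $L^0(\Omega;H)$ to $L^0(\Omega;C([0,T];H))$ proved in \S\ref{sec:mom} immediately yields the Feller property. The Markov property follows from uniqueness in $\cJ_0$ (Corollary~\ref{cor:uniq}) via the standard restart-at-time-$s$ argument. For existence of an invariant measure I would apply Krylov-Bogoliubov: the It\^o formula of Theorem~\ref{thm:Ito} applied to $\norm{X_t}^2$, combined with the coercivity of $A$, the Fenchel-Young identity $X\xi = j(X)+j^*(\xi)$, and the linear growth of $B$, yields
\[
  C\,\E\int_0^t \norm{X_s}_V^2\,ds + \E\int_0^t\!\!\int_D \bigl(j(X_s)+j^*(\xi_s)\bigr)\,dx\,ds
  \lesssim 1 + t + \norm{X_0}^2.
\]
Hence the time averages $\nu_t := \frac{1}{t}\int_0^t P_s^* \delta_{X_0}\,ds$ have uniformly bounded $V$-mass and are tight on $H$ by the compact embedding $V \embed H$ and Markov's inequality; any weak limit point is invariant, and ergodicity follows by extracting an extremal element of $\mathscr{M}$.

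Second, to establish the uniform integral bound, I would start the equation in the stationary regime with initial law $\mu \in \mathscr{M}$ and exploit that $t\mapsto\E\norm{X_t}^2$ is constant. Taking expectation in the It\^o formula above and dividing by $t$, the boundary terms drop out and one is left, after the limit $t\to\infty$, with
\[
  2C\int_H \norm{u}_V^2\,\mu(du)
  + 2\int_H\!\!\int_D \bigl(j(u)+j^*(\beta^0(u))\bigr)\,\mu(du)
  \leq \int_H \norm{B(u)}^2_{\cL^2(U,H)}\,\mu(du),
\]
where the identification of $\xi$ with the minimal selection $\beta^0(X)$ under stationarity uses the regularity result of Theorem~\ref{thm:reg1}, which guarantees that invariant measures charge $V$. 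The linear growth of $B$ bounds the right-hand side by $N\bigl(1+\int_H\norm{u}^2\,\mu(du)\bigr)$, and since $\norm{u}\lesssim\norm{u}_V$, an absorption argument closes the bound. Strictly speaking, this calculation must be carried out on the Yosida-regularized equation (with $j_\lambda$, $\beta_\lambda$) and then passed to the limit by lower semicontinuity of the convex integrals $u\mapsto\int j(u)$ and $u\mapsto\int j^*(u)$.

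Third, under superlinearity of $\beta$, the It\^o formula applied to $\norm{X-Y}^2$ for two solutions with initial data $X_0$, $Y_0$ gives, after invoking monotonicity of $A$ and of $\beta$,
\[
  \E\norm{X_t-Y_t}^2 + 2\E\int_0^t\!\!\int_D (\xi-\eta)(X-Y)\,dx\,ds
  \leq \norm{X_0-Y_0}^2 + \int_0^t \E\norm{B(X_s)-B(Y_s)}^2_{\cL^2(U,H)}\,ds.
\]
Superlinearity provides a coercive lower bound on $\int_D(\xi-\eta)(X-Y)$ in terms of a norm stronger than $\norm{X-Y}^2$, which absorbs the Lipschitz term from $B$ and yields exponential contraction of $\E\norm{X_t-Y_t}^2$. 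Combined with irreducibility coming from the noise, this implies asymptotic strong Feller, from which uniqueness of $\mu$ and strong mixing follow via the Doob-Khasminskii theorem.

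The main obstacle, as in most invariant-measure arguments in the variational setting, is the rigorous justification of the It\^o formula in the stationary regime with only the minimal integrability $j(X)+j^*(\xi)\in L^1$; this is precisely the role of the generalized It\^o formula of Theorem~\ref{thm:Ito}. A secondary subtlety is that the $j^*(\beta^0(u))$-term in the bound is intrinsically a statement about the minimal section of $\beta$ on $\operatorname{supp}\mu$, and so the identification of $\xi$ with $\beta^0(X)$ \emph{under} $\mu$ must be handled carefully via the Yosida regularization before the limit.
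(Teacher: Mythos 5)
First, a point of comparison: this proposition is not proved in the paper at all --- it is recalled verbatim from \cite[\S 5]{cm:inv}, so there is no internal argument to measure your proposal against. Your outline follows the natural route (Krylov--Bogoliubov for existence and ergodicity, It\^o-formula moment bounds for the integral estimate, a contraction argument for uniqueness), and the first stage is essentially sound, modulo the fact that the Feller property via the $L^0$-Lipschitz continuity of the solution map uses assumption (B1). The second and third stages, however, contain genuine gaps.

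For the uniform bound over $\mathscr{M}$, your absorption argument is circular: to ``start in the stationary regime'' and take expectations in the It\^o formula you already need $\int_H\norm{u}^2\,\mu(du)<\infty$, and the final step, bounding $\int_H\norm{B(u)}^2_{\cL^2(U,H)}\,\mu(du)$ by $1+\int_H\norm{u}^2\,\mu(du)$ and absorbing via $\norm{u}\lesssim\norm{u}_V$, presupposes finiteness of the very quantity to be estimated, whereas an arbitrary invariant measure is a priori only a probability measure on $H$. The correct device --- the one this paper itself uses in its final theorem for the $\norm{Au}^2$ bound --- is to approximate the unbounded integrand from below by functions $F_n\in C_b(H)$, exploit invariance in the form $\int_H F_n\,d\mu=\frac1t\int_0^t\int_H P_sF_n\,d\mu\,ds$, insert pathwise estimates for deterministic initial data, and let $n\to\infty$ by monotone convergence. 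Relatedly, the identification of $\xi$ with $\beta^0(X)$ cannot be delegated to Theorem~\ref{thm:reg1}, whose hypotheses ($X_0\in L^2(\Omega;V)$ and $B(\cdot,X)$ valued in $\cL^2(U,V)$) are not among the standing assumptions; the $j^*(\beta^0(u))$ term must instead come out of the Yosida scheme, via $\beta_\lambda(r)\to\beta^0(r)$ pointwise, lower semicontinuity of $j^*$, and Fatou. Finally, the route you propose for uniqueness and strong mixing --- irreducibility plus asymptotic strong Feller plus Doob--Khasminskii --- is not available here: nothing in the hypotheses makes the noise nondegenerate ($B$ may even vanish), so irreducibility cannot ``come from the noise''. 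Nor does superlinearity of $\beta$ by itself furnish a lower bound for $\int_D(\xi-\eta)(X-Y)$ by a norm of $X-Y$ stronger than $\norm{X-Y}^2$; monotonicity gives only nonnegativity, and quantitative coercivity for a general superlinear maximal monotone graph requires a separate argument. Uniqueness and mixing have to be obtained by a direct contraction estimate exploiting the superlinearity, as in \cite{cm:inv}, not by Doob's theorem.
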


\begin{thm}
  Assume that the hypotheses of \S\ref{ssec:ass} are satisfied, that $A$ is symmetric and that 
  \[
    B:H\to\cL^2(U,V), \qquad
    \norm{B(v)}_{\cL^2(U,V)} \lesssim 1 + \norm{v}_V.
  \]
  Then there exists a positive constant $N$ such that 
  \[
  \int_H\norm{Au}^2\,\mu(du) < N \qquad \forall \mu \in \mathscr{M}.
  \]
  In particular, every invariant measure $\mu$ is concentrated on
  $\dom(A_2)$, that is $\mu(\dom(A_2))=1$.
\end{thm}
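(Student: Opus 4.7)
The strategy is to reduce the claim to Theorem~\ref{thm:reg1} by taking an initial datum $X_0$ distributed as $\mu$ itself, so that the law of $X(t)$ coincides with $\mu$ for every $t \geq 0$ by invariance, and then to transfer the resulting space-time estimate on $AX$ to a spatial estimate on $Au$ via the Yosida approximation of the part of $A$ in $H$.

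Fix $\mu \in \mathscr{M}$. The preceding proposition yields a constant $N_0$, independent of $\mu$, with $\int_H \norm{u}_V^2\,\mu(du) < N_0$. After enlarging the probability space if necessary, take $X_0$ to be $\cF_0$-measurable with law $\mu$ and independent of $W$; then $\E\norm{X_0}_V^2 < N_0$. Invariance of $\mu$ combined with the Markov property gives $X(t) \sim \mu$ for every $t \geq 0$, so by Fubini
\[
\E\int_0^T \norm{X(s)}_V^2\,ds = T\int_H \norm{u}_V^2\,\mu(du) < TN_0,
\]
and the $\cL^2(U,V)$-growth assumption on $B$ implies $\E\int_0^T \norm{B(X(s))}_{\cL^2(U,V)}^2\,ds \lesssim T(1+N_0)$. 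Assumption~\eqref{hyp_reg} is therefore met with bounds depending only on $T$ and $N_0$, and a bookkeeping of the constants arising in the proof of Theorem~\ref{thm:reg1} produces a constant $N_1=N_1(T,N_0)$, independent of $\mu$, such that
\[
\E\int_0^T \norm{AX(s)}^2\,ds \leq N_1.
\]

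To convert this space-time bound into a spatial one, introduce the Yosida approximation $A_\lambda := A_2(I+\lambda A_2)^{-1} \in \cL(H)$. Since $X(t) \sim \mu$ and the continuous nonnegative functional $u \mapsto \norm{A_\lambda u}^2$ satisfies $\E\norm{A_\lambda X(t)}^2 \leq \norm{A_\lambda}_{\cL(H)}^2\,\E\norm{X(t)}^2 < \infty$, the pushforward identity gives
\[
\int_H \norm{A_\lambda u}^2\,\mu(du) = \E\norm{A_\lambda X(t)}^2 \qquad \forall t \in [0,T].
\]
Theorem~\ref{thm:reg1} guarantees $X(s,\omega) \in \dom(A_2)$ for a.e.\ $(s,\omega)$, and the standard contractivity bound $\norm{A_\lambda u} \leq \norm{A_2 u}$ on $\dom(A_2)$, together with averaging in $t \in [0,T]$, yields
\[
T\int_H \norm{A_\lambda u}^2\,\mu(du) \leq \E\int_0^T \norm{A_\lambda X(s)}^2\,ds \leq \E\int_0^T \norm{AX(s)}^2\,ds \leq N_1.
\]

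Letting $\lambda \downarrow 0$, the classical monotonicity property of the Yosida approximation of a maximal monotone operator on a Hilbert space ensures that $\norm{A_\lambda u}^2 \uparrow \norm{A_2 u}^2$ if $u \in \dom(A_2)$ and $\norm{A_\lambda u}^2 \uparrow +\infty$ otherwise. Monotone convergence applied to the uniform bound above thus forces $\mu(\dom(A_2)^c) = 0$ and delivers
\[
\int_H \norm{Au}^2\,\mu(du) \leq N_1/T =: N,
\]
with $N$ independent of $\mu \in \mathscr{M}$. The main obstacle is the careful tracking of the constants in the proof of Theorem~\ref{thm:reg1} to verify that $N_1$ depends only on $T$, on $N_0$, and on the structural constants of the problem, and not on the particular invariant measure; once this is in place, the Yosida regularization together with monotone convergence handles the passage to the intrinsic estimate on $\norm{A_2 u}^2$ in a direct way.
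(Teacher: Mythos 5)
Your argument is correct in outline but takes a genuinely different route from the paper. The paper never starts the equation from a $\mu$-distributed initial datum: it keeps a deterministic initial point $x\in V$, extracts from the proof of Theorem~\ref{thm:reg1} the quantitative bound $\E\int_0^t\norm{AX^x(s)}^2\,ds\lesssim 1+\norm{x}_V^2$, and then integrates this over $x$ against $\mu$ by applying the invariance identity $\int_H P_sF_n\,d\mu=\int_H F_n\,d\mu$ to the truncated functionals $F_n(u):=\norm{A_{1/n}u}^2\wedge n^2$, which lie in $C_b(H)$; the conclusion follows by $\norm{A_\lambda u}\leq\norm{Au}$, the uniform $V$-moment bound on $\mu$ from the preceding proposition, and monotone convergence. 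Your version replaces the integration over $x$ by stationarity: you start from $X_0\sim\mu$ and read the spatial estimate off the space-time estimate by Fubini. The two proofs share the essential ingredients (the $\lambda$-uniform estimate from Theorem~\ref{thm:reg1} with constants tracked, the Yosida regularization of $\norm{A\cdot}^2$, and monotone convergence/Fatou to pass to $F$), and both need the uniform bound $\int_H\norm{u}_V^2\,d\mu<N_0$ over $\mathscr{M}$. What your route buys is a cleaner final step (no truncation at level $n^2$ is needed, since $\norm{A_\lambda\cdot}^2$ is already integrable against $\mu$); what it costs is the claim that $\mathrm{Law}(X(t))=\mu$ for all $t$ when $X_0$ is a \emph{random} $\cF_0$-measurable datum with law $\mu$. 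Invariance in the paper is formulated only through the semigroup $P$ acting on deterministic initial points, so your stationarity step requires the identity $\E f(X(t))=\int_H P_tf\,d\mu$ for random initial data; this follows from the local property of solutions (Corollary~\ref{cor:loc}) applied to countably-valued approximations of $X_0$ plus a limiting argument, or from the Markov property conditional on $\cF_0$, but it is not available off the shelf in the paper and should be justified, especially since under the mere local Lipschitz hypothesis (B2) the continuity of the solution map in $L^0$ is not established. The paper's formulation with deterministic $x$ and $F_n\in C_b(H)$ is designed precisely to sidestep this point.
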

\begin{proof}
  For every $x\in V$, let $(X^x,\xi^x)$ be the unique strong solution
  to \eqref{eq:0} with initial datum $x$.  Setting $G:=B(X^x)$,
  It\^o's formula for $\varphi_\lambda(X_\lambda)$ as in the proof of
  the previous theorem yields
  \begin{align*}
    &\varphi_\lambda(X_\lambda(t))
      + \int_0^t\norm[\big]{A_\lambda X_\lambda(s)}^2\,ds + 
    \int_0^t \ip[\big]{A_\lambda X_\lambda(s)}{\beta_\lambda(X_\lambda(s))}\,ds\\
    &\hspace{3em} = \varphi_\lambda(x)
      + \frac12 \int_0^t \operatorname{Tr} \bigl(%
      G^*(s) D^2\varphi_\lambda(X_\lambda(s)) G(s) \bigr)\,ds
    + \int_0^t A_\lambda X_\lambda(s) G(s)\,dW(s).
  \end{align*}
  Since $A_\lambda X_\lambda \in L^2(\Omega; L^\infty(0,T; H))$ and
  $G\in L^2(\Omega; L^2(0,T; \cL^2(U,H)))$, the last term on the right
  hand side is a martingale; hence, taking expectations and recalling
  that $\varphi_\lambda(x)\lesssim \norm{x}_V^2$, it follows by
  Lemma~\ref{lm:pos} and by the estimates obtained in the proof of the
  previous theorem that
  \[
  \E\int_0^t\norm[\big]{A_\lambda X_\lambda(s)}^2\,ds
  \lesssim \norm{x}_V^2 + \E\norm{G}^2_{L^2(0,t; \cL^2(U,V))}.
  \]
  Since this holds for every $\lambda>0$, letting $\lambda \to 0$ and
  recalling that, as in the proof of the previous theorem,
  $A_\lambda X_\lambda$ converges to $AX$ weakly in
  $L^2(\Omega;L^2(0,T;H))$, a weak lower semicontinuity argument and
  the linear growth assumption on $B$ yield
  \begin{equation}
    \label{eq:im}
    \E \int_0^t \norm[\big]{AX^x(s)}^2\,ds
    \lesssim 1 + \norm{x}_V^2
  \end{equation}
  for every $t\in[0,T]$ and $x\in V$.
  Let us introduce the function $F:H \to [0,+\infty]$ defined as
  \[
  F(u) := \begin{cases}
    \norm{Au}^2 \quad&\text{if } u\in\dom(A_2),\\
    +\infty \quad&\text{if } u\in H\setminus\dom(A_2),
  \end{cases}
  \]
  and the sequence of functions $(F_n)_{n\in\enne}$, $F_n: H \to [0,+\infty)$,
  defined as
  \[
  F_n(u) := \norm[\big]{A_{1/n}u}^2 \wedge n^2.
  \]
  It is easily seen that $F_n \in C_b(H)$ for all $n \in \enne$ and
  that $F_n$ converges pointwise to $F$ from below. Therefore, for any
  invariant measure $\mu$, it follows by Fubini's theorem that
  \begin{align*}
    \int_H F_n(x)\,\mu(dx)
    &= \int_0^1\!\!\int_H F_n(x)\,\mu(dx)\,ds
    = \int_0^1\!\!\int_HP_sF_n(x)\,\mu(dx)\,ds\\
    &=\int_H\!\int_0^1 \E \bigl( \norm[\big]{A_{1/n}X^x(s)}^2 \wedge n^2 \bigr)%
      \,ds\,\mu(dx)\\
    &\leq \int_H\!\E\int_0^1 \norm[\big]{A_{1/n}X^x(s)}^2 \,ds\,\mu(dx)
  \end{align*}
  Recalling that $\norm{A_\lambda u} \leq \norm{Au}$ for all
  $u \in H$, it follows by \eqref{eq:im} that
  \[
    \int_H F_n(x)\,\mu(dx) \lesssim 1 
    + \int_H \norm{x}_V^2\,\mu(dx).
  \]
  Since $\norm{\cdot}_V^2\in L^1(H,\mu)$ by
  \cite[Theorem~5.3]{cm:inv}, we get
  \[
    \int_H F_n(x)\,\mu(dx) \lesssim  N
  \]
  for a positive constant $N$, independent of $n$ and $\mu$.  Letting
  $n\to\infty$, by the monotone convergence theorem we deduce that
  $F \in L^1(H,\mu)$, hence $F$ is finite $\mu$-almost everywhere in
  $H$, and in particular $\mu(\dom(A_2))=1$.
\end{proof}

\bibliographystyle{amsplain}
\bibliography{ref,extra}

\end{document}